\definecolor{codegreen}{rgb}{0,0.6,0}
\definecolor{codegray}{rgb}{0.5,0.5,0.5}
\definecolor{codepurple}{rgb}{0.58,0,0.82}
\definecolor{backcolour}{rgb}{0.95,0.95,0.92}
\lstdefinestyle{mystyle}{
	backgroundcolor=\color{backcolour},   
	commentstyle=\color{codegreen},
	keywordstyle=\color{magenta},
	numberstyle=\footnotesize\color{codegray},
	stringstyle=\color{codepurple},
	basicstyle=\ttfamily\small,
	breakatwhitespace=false,         
	breaklines=true,                 
	captionpos=b,                    
	keepspaces=true,                 
	numbers=left,                    
	numbersep=5pt,                  
	showspaces=false,                
	showstringspaces=false,
	showtabs=false,                  
	tabsize=2
}
\definecolor{seagreen}{rgb}{0.18, 0.55, 0.34}
\definecolor{mediumviolet-red}{rgb}{0.78, 0.08, 0.52}
\definecolor{khaki}{rgb}{0.94, 0.9, 0.55}
\lstdefinelanguage{mypython}
{
	keywords=[1]{from, import, assert, not, print},
	keywordstyle=[1]{\color{mediumviolet-red}},
	keywords=[2]{surecr, torch, cp, lo, pl},
	keywordstyle=[2]{\color{seagreen}},
	numbers=none,
	upquote=true,
	showstringspaces=false,
	basicstyle=\ttfamily,
	columns=fullflexible,
	keepspaces=true,
	emph={True,False,as,def,return,float,class,match,switch,len},
	emphstyle={\color{seagreen}},
	frame=trBL,
	belowskip=1em,
	aboveskip=1em,
	captionpos=b
}
\crefname{equation}{}{}
\crefname{chapter}{Chapter}{Chapters}
\crefname{item}{item}{items}
\crefname{figure}{Figure}{Figures}
\crefname{theorem}{Theorem}{Theorems}
\crefname{assumption}{Assumption}{Assumption}
\crefname{lemma}{Lemma}{Lemmas}
\crefname{proposition}{Proposition}{Propositions}
\crefname{corollary}{Corollary}{Corollarys}
\crefname{definition}{Definition}{Definitions}
\crefname{fact}{Fact}{Facts}
\crefname{example}{Example}{Examples}
\crefname{algorithm}{Algorithm}{Algorithms}
\crefname{remark}{Remark}{Remarks}
\crefname{note}{Note}{Notes}
\crefname{notation}{Notation}{Notations}
\crefname{case}{Case}{Cases}
\crefname{exercise}{Exercise}{Exercises}
\crefname{question}{Question}{Questions}
\crefname{claim}{Claim}{Claims}
\crefname{enumi}{}{}
\numberwithin{equation}{section}
\NewDocumentCommand{\lplabel}{o m}{%
	\makebox[0pt][r]{#2\hspace*{2em}}%
	\IfNoValueF{#1}
	{\def\@currentlabel{#2}\ltx@label{#1}}
}
\theoremstyle{plain}
\newtheorem{theorem}{Theorem}[section]
\newtheorem{fact}[theorem]{Fact}
\newtheorem{lemma}[theorem]{Lemma}
\newtheorem{proposition}[theorem]{Proposition}
\newtheorem{assumption}[theorem]{Assumption}
\theoremstyle{definition}
\newtheorem{remark}[theorem]{Remark}
\newcommand{\minimize}{\ensuremath{\operatorname{minimize}}}
\newcommand{\maximize}{\ensuremath{\operatorname{maximize}}}
\newcommand{\argmin}{\ensuremath{\operatorname{argmin}}}
\newcommand{\weakly}{\ensuremath{{\;\operatorname{\rightharpoonup}\;}}}
\newcommand{\Prox}{\ensuremath{\operatorname{Prox}}}
\newcommand{\dom}{\ensuremath{\operatorname{dom}}}
\providecommand{\abs}[1]{\left|#1\right|}
\providecommand{\norm}[1]{\left\lVert#1\right\rVert}
\providecommand{\innp}[1]{\left\langle#1\right\rangle}
\newcommand\scalemath[2]{\scalebox{#1}{\mbox{\ensuremath{\displaystyle #2}}}}
\begin{document}

\title{Alternating Proximal Point Algorithm with Gradient Descent and Ascent Steps for 
Convex-Concave Saddle-Point Problems}

\author{
	Hui Ouyang\thanks{Department of Electrical Engineering, Stanford University.
		E-mail: \href{mailto:houyang@stanford.edu}{\texttt{houyang@stanford.edu}}.}
}

\date{October 30, 2023}

\maketitle

\begin{abstract}
Inspired by the Optimistic Gradient Ascent-Proximal Point Algorithm (OGAProx)
 proposed   by Bo{\c{t}}, Csetnek, and Sedlmayer for solving a saddle-point problem 
 associated with a convex-concave function 
 with a nonsmooth coupling function and one regularizing function, 
 we introduce the
Alternating Proximal Point Algorithm with Gradient Descent and Ascent Steps
for solving a saddle-point problem associated with a convex-concave function 
constructed by a smooth coupling function and two regularizing functions. 
In this work, we not only provide weak and linearly convergence of 
the sequence of iterations and of the minimax gap function evaluated at the ergodic 
sequences, similarly to what   Bo{\c{t}} et al.\,did,   but also demonstrate
the convergence and linearly convergence of function values evaluated at  convex 
combinations of iterations under convex and strongly convex assumptions, respectively.
\end{abstract}

{\small
	\noindent
	{\bfseries 2020 Mathematics Subject Classification:}
	{
		Primary 90C25, 47H05;  
		Secondary 47J25,  90C30.
	}
	
	\noindent{\bfseries Keywords:}
	Convex-Concave Saddle-Point Problems, 
	Proximity Mapping, Maximal Monotonicity, 
	Convergence,   Linear Convergence
}

 
\section{Introduction}

 In the whole work, $\mathcal{H}_{1}$ and $\mathcal{H}_{2}$ are real Hilbert spaces. 
 Let $X \subseteq \mathcal{H}_{1}$ and $Y \subseteq \mathcal{H}_{2}$ be  nonempty, 
 closed, and  convex subsets of the Hilbert spaces. 
 Let
 $f: \mathcal{H}_{1} \times \mathcal{H}_{2} \to \mathbf{R} \cup \{ -\infty, +\infty\}$ 
 satisfy that $(\forall y \in Y)$ $f(\cdot, y) : \mathcal{H}_{1}  \to \mathbf{R} \cup \{ 
 -\infty\}$ is proper, convex, and lower semicontinuous,  and that $(\forall x \in X)$
 $f(x, \cdot) : \mathcal{H}_{2} \to \mathbf{R} \cup \{ +\infty\}$ is proper, concave, and 
 upper semicontinuous.   
 We follow the definition of \emph{effective domain of $f$} from 
 \cite[Page~242]{RockafellarMinimax1964}: $\dom f:= \{ (\bar{x},\bar{y})  ~:~ (\forall y 
 \in 
 \mathcal{H}_{2}) f(\bar{x},y) < \infty \text{ and } (\forall x \in \mathcal{H}_{1}) f(x,\bar{y}) 
 > -\infty\}$.
 
 We say  $(x^{*},y^{*}) \in X \times Y$  is a \emph{saddle-point}  of $f$ if
\begin{align}\label{eq:saddle-point}
 	(\forall (x,y) \in X \times Y) \quad  f(x^{*},y) \leq  f(x^{*},y^{*}) \leq  f(x,y^{*}).
\end{align}
 Note that, via \cite[Pages~120 to 123]{RockafellarSaddlePoints1971}, because 
 $(\forall y \in Y)$ $f(\cdot, y) : \mathcal{H}_{1}  \to \mathbf{R} \cup \{ 
 -\infty\}$ is   lower semicontinuous and $(\forall x \in X)$
 $f(x, \cdot) : \mathcal{H}_{2} \to \mathbf{R} \cup \{ +\infty\}$ is  
 upper semicontinuous, if $X$ and $Y$ are closed and bounded, then the set of all 
 saddle-points of $f$ is nonempty, closed, convex, and bounded. 
 Moreover, when $ \mathcal{H}_{1} $ and $ \mathcal{H}_{2} $ are finite-dimensional, 
 we have the following result. 
 \begin{fact}  {\rm \cite[Chapter VII, Theorem~4.3.1]{HiriartJeanLemarechal1993}}
 	\label{fact:existencesaddlepoint}
 	Suppose that $\mathcal{H}_{1}$ and $\mathcal{H}_{2}$  are finite-dimensional. 
 	Suppose that $X \subseteq \mathcal{H}_{1}$ and $Y \subseteq \mathcal{H}_{2}$ are 
 	nonempty, closed, and convex, and that 
 	$(\forall y \in Y)$ $f(\cdot, y) : X  \to \mathbf{R} $ is  convex and $(\forall x \in X)$
 	$f(x, \cdot) : Y \to \mathbf{R}$ is  concave.
 	Suppose that $X$ is bounded, or that there exists $y_{0} \in Y$ such that 
 	$\lim\limits_{\stackrel{x \in X}{\norm{x} \to \infty}} f(x,y_{0}) =+ \infty$.
 	Suppose that $Y$ is bounded, or that there exists $x_{0} \in Y$ such that 
 	$\lim\limits_{\stackrel{y \in Y}{\norm{y} \to \infty}}  f(x_{0},y) =- \infty$. 
 	Then $f$ has a nonempty, convex, and compact set of saddle-points on $X \times Y$.
 \end{fact}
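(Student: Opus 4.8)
\bigskip
\noindent\textbf{Proof plan for \Cref{fact:existencesaddlepoint}.}
The plan is to reduce the coercive statement to the compact case by a truncation argument, and then to read off convexity and compactness of the saddle-point set from its product structure. \emph{Step 1 (compact case).} First I would treat the situation in which $X$ and $Y$ are bounded, hence compact. Then $x \mapsto \sup_{y\in Y} f(x,y)$ is convex, $y \mapsto \inf_{x\in X} f(x,y)$ is concave, and the relevant extrema are attained by compactness (this is the place where, for full rigor in finite dimensions, one invokes the semicontinuity available after passing to lower/upper closures); the minimax theorem for compact convex domains (e.g.\ Sion's minimax theorem) then yields $\min_{x\in X}\max_{y\in Y} f(x,y) = \max_{y\in Y}\min_{x\in X} f(x,y)$, and any pair $(x^{*},y^{*})$ realizing both outer optimizations is a saddle point.

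\emph{Step 2 (truncation and boundedness).} For the general case I would set $X_{n} := X \cap \{x : \norm{x} \le n\}$ and $Y_{n} := Y \cap \{y : \norm{y} \le n\}$, apply Step~1 to obtain, for $n$ large, a saddle point $(x_{n},y_{n}) \in X_{n}\times Y_{n}$ of the truncated problem with value $v_{n} := f(x_{n},y_{n})$, and then bring in the coercivity hypotheses to keep $\{(x_{n},y_{n})\}$ bounded. In the case where both $X$ and $Y$ are unbounded, with witnesses $x_{0}\in X$, $y_{0}\in Y$ (the other cases being easier), the truncated saddle inequalities give $f(x_{n},y_{0}) \le v_{n} \le f(x_{0},y_{n})$ for $n$ large, while $v_{n} \le \sup_{y\in Y} f(x_{0},y) < \infty$ and $v_{n} \ge \inf_{x\in X} f(x,y_{0}) > -\infty$ (finiteness of these bounds since $f(x_{0},\cdot)$ is concave tending to $-\infty$ and $f(\cdot,y_{0})$ is convex tending to $+\infty$). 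Hence $f(x_{n},y_{0})$ is bounded above and $f(x_{0},y_{n})$ is bounded below, and, since $f(\cdot,y_{0})\to+\infty$ and $f(x_{0},\cdot)\to-\infty$, this forces $\{x_{n}\}$ and $\{y_{n}\}$ to be bounded.

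\emph{Step 3 (no escape, then the structure of the saddle set).} The key point is that as soon as $n$ is large enough that $\norm{x_{n}} < n$ and $\norm{y_{n}} < n$, the pair $(x_{n},y_{n})$ is already a saddle point of $f$ on all of $X\times Y$: for $y\in Y$ with $\norm{y}>n$, walking along $y_{t} := (1-t)y_{n} + t y$ to the largest $t^{*}\in(0,1)$ with $\norm{y_{t^{*}}} = n$ (so $y_{t^{*}}\in Y_{n}$), concavity of $f(x_{n},\cdot)$ gives $(1-t^{*})v_{n} + t^{*} f(x_{n},y) \le f(x_{n},y_{t^{*}}) \le v_{n}$, whence $f(x_{n},y)\le v_{n}$, and symmetrically with convexity in the $x$-variable; together with the truncated saddle property for points inside the balls this gives the saddle inequalities on $X\times Y$, so the saddle-point set is nonempty. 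For the remaining assertions I would invoke the standard interchange argument: if $(x_{1}^{*},y_{1}^{*})$ and $(x_{2}^{*},y_{2}^{*})$ are saddle points then so are $(x_{1}^{*},y_{2}^{*})$ and $(x_{2}^{*},y_{1}^{*})$, and $f$ has a common value on all of them, so the saddle-point set is $S_{X}\times S_{Y}$ with $S_{X} = \argmin_{x\in X}\sup_{y\in Y} f(x,y)$ and $S_{Y} = \argmax_{y\in Y}\inf_{x\in X} f(x,y)$; then $S_{X}$ is convex (a sublevel set of a convex function), closed, and bounded (since $\sup_{y} f(x,y)\ge f(x,y_{0})\to+\infty$ by coercivity, or simply since $X$ is bounded), hence compact, and symmetrically for $S_{Y}$, so the saddle-point set is nonempty, convex, and compact.

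\emph{Main obstacle.} I expect the delicate part to be Step~2 together with the attendant semicontinuity bookkeeping: guaranteeing that the truncated minimax problems actually attain their values (so that $(x_{n},y_{n})$ exists) and converting the bare coercivity hypotheses into genuine uniform boundedness of $\{x_{n}\}$ and $\{y_{n}\}$. The segment "no escape from the ball" argument of Step~3, which is exactly what makes the reduction from the compact case work without any growth assumption beyond coercivity, is the one genuinely clever ingredient; everything else is a routine assembly of the minimax theorem, compactness, and the product structure of the saddle-point set.
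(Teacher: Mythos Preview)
The paper does not prove this statement at all: it is recorded as a \emph{Fact} with a citation to \cite[Chapter~VII, Theorem~4.3.1]{HiriartJeanLemarechal1993} and no argument is given in the paper. So there is no ``paper's own proof'' to compare your proposal against.

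As for your proposal itself, the overall architecture---reduce to the compact case via a minimax theorem, truncate with balls $X_{n},Y_{n}$, use the coercivity witnesses $x_{0},y_{0}$ to bound $(x_{n},y_{n})$, then use the segment/convexity ``no escape'' argument to upgrade a truncated saddle point to a global one, and finally read off the product structure $S_{X}\times S_{Y}$---is a standard and sound route to results of this type. The one place that genuinely needs care is the point you flag yourself: the hypotheses in the stated Fact are only convexity of $f(\cdot,y)$ and concavity of $f(x,\cdot)$, with no explicit semicontinuity, so in Step~1 you cannot simply invoke Sion's theorem off the shelf. In finite dimensions a real-valued convex function on a convex set is continuous on the relative interior but may fail to be lower semicontinuous at relative boundary points, so ``passing to lower/upper closures'' changes $f$ and you would then have to argue that saddle points of the closure are saddle points of the original $f$ (or else restrict to relative interiors and handle the boundary separately). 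This is exactly the ``semicontinuity bookkeeping'' you identify as the main obstacle; if you make that step precise, the rest of your plan goes through.
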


In the rest of this work, we assume that there exists at least one saddle-point of $f$, 
and we aim to solve the following  \emph{convex-concave saddle-point problem}:
\begin{align} \label{eq:problem}
 	\maximize_{y \in \mathcal{H}_{2}}	\minimize_{x \in \mathcal{H}_{1}} f(x,y)
\end{align}
 
  \subsection{Related Work}
 This work is motivated by \cite{BotCsetnekSedlmayer2022accelerated} by Bo{\c{t}},
 Csetnek, and Sedlmayer. 
 We compare \cite{BotCsetnekSedlmayer2022accelerated} and   this work 
 below. 
 \begin{enumerate}
 	\item The authors in \cite{BotCsetnekSedlmayer2022accelerated} work on the 
 	convex-concave function $ (\forall (x,y) \in \mathcal{H}_{1} \times \mathcal{H}_{2} )$ 
 	$f(x,y) =\Phi (x,y) -g(y)$, where $g$ is (strongly) convex and lower semicontinuous, 
 	$(\forall y \in \dom g)$
 	$\Phi(\cdot, y)$ is convex and lower semicontinuous, and for every $x$ in the 
 	projection
 	of $\dom \Phi$ onto $\mathcal{H}_{1}$, $\Phi(x, \cdot)$ is concave and Fr\'echet 
 	differentiable. 
 	
 	In this work, we consider the convex-concave function  
 	$(\forall (x,y) \in \mathcal{H}_{1} \times \mathcal{H}_{2} )$ $f(x,y) =f_{1}(x) +\Phi (x,y) 
 	-f_{2}(y)$, where $f_{1}$ and $f_{2}$ are (strongly) convex and lower 
 	semicontinuous,  
 	$(\forall y \in \dom f_{2})$ $\Phi(\cdot, y) $ is convex and Fr\'echet 
 	differentiable, 
 	and $(\forall x \in \dom f_{1})$ $\Phi(x,\cdot) $ is concave and  Fr\'echet 
 	differentiable.
 	
 	Because the associated convex-concave functions considered in this work is 
 	different from that worked in  
 	\cite{BotCsetnekSedlmayer2022accelerated}, our Alternating Proximal Point 
 	Algorithm with Gradient Descent and Ascent Steps  presented in 
 	\cref{eq:algorithmproxif1f2} below  is different from the Optimistic Gradient 
 	Ascent-Proximal Point Algorithm (OGAProx) studied in  	
 	\cite{BotCsetnekSedlmayer2022accelerated}.
 	
 	\item Let $((x^{k},y^{k}))_{k \in \mathbf{N}}$ be the associated sequence of 
 	iterations, 
 	and let $(x^{*},y^{*})$ be a saddle-point of $f$.
 	Although in  \cite{BotCsetnekSedlmayer2022accelerated}, the authors proved weak 
 	and linearly convergence of $((x^{k},y^{k}))_{k \in \mathbf{N}}$ to $(x^{*},y^{*})$, 
 	they didn't consider the convergence of function values to $f(x^{*},y^{*})$
 	except for a result on the minimax gap function evaluated at the ergodic sequences.
 	
 	In this work, we not only presented weak and linearly convergence of 
 	$((x^{k},y^{k}))_{k \in \mathbf{N}}$ to $(x^{*},y^{*})$,  but also show
 	the convergence $f(\hat{x}_{k},\hat{y}_{k}) \to f(x^{*},y^{*})$  under convex 
 	assumptions and  the
 	linearly convergence of $\left(f(\hat{x}_{k},\hat{y}_{k}) \right)_{k \in \mathbf{N}}$ to  
 	$  f(x^{*},y^{*})$ under strongly convex assumptions, where $(\forall k \in 
 	\mathbf{N})$ $t_{k} \in \mathbf{R}_{+}$ with $t_{0} \in \mathbf{R}_{++}$, 
 	$	(\forall k \in \mathbf{N})$ $
 	\hat{x}_{k} := \frac{1}{\sum^{k}_{i=0}t_{i}} 
 	\sum^{k}_{j=0}t_{j}x^{j+1} $ and $
 	\hat{y}_{k} := \frac{1}{\sum^{k}_{i=0}t_{i}} 
 	\sum^{k}_{j=0}t_{j}y^{j+1}$.
 \end{enumerate}
 
 \subsection{Outline}
 The work is organized as follows. 
  Some preliminary results are given in \cref{section:Preliminaries}. 
 We present the Alternating Proximal Point Algorithm with Gradient Descent and Ascent 
 Steps and convergence results of this algorithm in \cref{section:MainResults}.
 In particular, in \cref{subsection:convergence}, we prove   weak convergence of the 
 sequence of iterations 
 to a saddle-point of the associated convex-concave function.
  We also demonstrate the convergence of 
 function values evaluated at convex combinations of iterations to the function value of 
the saddle-point under certain convex assumptions of the two associated regularizing 
 functions. Additionally, we provide linear convergence results for both the sequence of 
 iterations and  function values evaluated at convex combinations of iterations under 
 specific strongly convex assumptions in \cref{subsection:LinearConvergence}.

 \section{Preliminaries} \label{section:Preliminaries}
 From now on, $\mathcal{H}$ is a real Hilbert space.
 \begin{fact} \label{fact:abc}
 	Let 
 	$a, b,$ and $c$
 	be points in $\mathcal{H}$. Then 
 	\[ 
 	\innp{a-b, c-b} = \frac{1}{2} \left(\norm{a-b}^{2} + \norm{c-b}^{2} 
 	-\norm{c-a}^{2}\right).
 	\]
 \end{fact}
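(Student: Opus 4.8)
The plan is to reduce the claimed identity to the standard polarization identity by a change of variables, and then verify that identity by expanding a single squared norm. First I would set $u := a - b$ and $v := c - b$, so that $c - a = (c-b) - (a-b) = v - u$. With this notation the left-hand side of the asserted equality is $\innp{u,v}$ and the right-hand side is $\frac{1}{2}\lr{\norm{u}^{2} + \norm{v}^{2} - \norm{v-u}^{2}}$, so it suffices to establish $\innp{u,v} = \frac{1}{2}\lr{\norm{u}^{2} + \norm{v}^{2} - \norm{v-u}^{2}}$ for arbitrary $u, v \in \mathcal{H}$.

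Next I would expand $\norm{v-u}^{2}$ using the bilinearity and symmetry of the inner product on the real Hilbert space $\mathcal{H}$, namely $\norm{v-u}^{2} = \innp{v-u,\, v-u} = \norm{v}^{2} - 2\innp{u,v} + \norm{u}^{2}$. Substituting this into the right-hand side above, the $\norm{u}^{2}$ and $\norm{v}^{2}$ contributions cancel and one is left with $\frac{1}{2}\lr{2\innp{u,v}} = \innp{u,v}$, which is precisely the left-hand side. Undoing the substitution $u = a-b$ and $v = c-b$ then recovers the statement of the Fact.

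Equivalently, one could simply expand all three squared norms on the right-hand side directly in terms of $\innp{a,b}$, $\innp{b,c}$, $\innp{a,c}$, $\norm{a}^{2}$, $\norm{b}^{2}$, and $\norm{c}^{2}$ and collect terms: the $\norm{a}^{2}$ and $\norm{c}^{2}$ contributions cancel in pairs, leaving $\norm{b}^{2} - \innp{a,b} - \innp{b,c} + \innp{a,c}$, which is exactly the expansion of $\innp{a-b,\,c-b}$. There is no genuine obstacle in either route; the only point that requires a little care is the bookkeeping of signs when expanding $\norm{c-a}^{2}$, since it enters the right-hand side with a negative coefficient, and the use of $\innp{a,c} = \innp{c,a}$, which is valid because $\mathcal{H}$ is a \emph{real} Hilbert space so that no complex conjugation intervenes.
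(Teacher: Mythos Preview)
Your proof is correct and essentially matches the paper's argument: the paper expands $\norm{a-b}^{2} + \norm{c-b}^{2} - \norm{c-a}^{2}$ directly in terms of inner products and collects terms, which is exactly the alternative route you describe at the end, while your substitution $u=a-b$, $v=c-b$ is just a cosmetic simplification of the same computation.
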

 \begin{proof}
 	It is clear that
\begin{align*}
 		&\norm{a-b}^{2} + \norm{c-b}^{2} -\norm{c-a}^{2} \\
 		=& \norm{a}^{2} -2 \innp{a,b} +\norm{b}^{2} +\norm{c}^{2} 
 		-2 \innp{c,b} +\norm{b}^{2} -\norm{c}^{2} +2 \innp{c,a} -\norm{a}^{2} \\
 		=&  -2 \innp{a,b}+ 2 \innp{b,b} - 2 \innp{c,b} +2 \innp{c,a} \\
 		=& 2	\innp{a-b, c-b}.
\end{align*}
 \end{proof}

\begin{fact} { \rm \cite[Lemma~2.47]{BauschkCombettes2017}} 
\label{fact:weakconvege}
	Let $(x_{k})_{k \in \mathbf{N}}$ be a sequence in   
	$\mathcal{H}$ and let $C$ be a 
	nonempty subset of $\mathcal{H}$. 
	Suppose that   $( \forall x \in C)$ $ (\norm{x_{k} -x})_{k \in \mathbf{N}} $ converges 
	and  that every weak sequential cluster point of $(x_{k})_{k \in \mathbf{N}}$ 
	belongs to $C$. Then $(x_{k})_{k \in \mathbf{N}}$  converges weakly to a point in 
	$C$. 
\end{fact}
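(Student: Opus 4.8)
The plan is to deduce weak convergence from two ingredients: boundedness of $(x_k)_{k\in\mathbf{N}}$ together with reflexivity of the Hilbert space $\mathcal{H}$, which guarantees that weak sequential cluster points exist, and a uniqueness argument showing there is only one such cluster point. First I would fix any $x\in C$ (possible since $C\neq\varnothing$); then $(\norm{x_k-x})_{k\in\mathbf{N}}$ converges by hypothesis, hence is bounded, so $(x_k)_{k\in\mathbf{N}}$ is bounded. Since $\mathcal{H}$ is reflexive, bounded sequences are weakly sequentially relatively compact, so $(x_k)_{k\in\mathbf{N}}$ admits at least one weak sequential cluster point, and by assumption every such point lies in $C$.

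The core step is to show that $(x_k)_{k\in\mathbf{N}}$ has \emph{exactly one} weak sequential cluster point. Let $u$ and $v$ be weak sequential cluster points; by hypothesis $u,v\in C$, so both $(\norm{x_k-u})_{k\in\mathbf{N}}$ and $(\norm{x_k-v})_{k\in\mathbf{N}}$ converge. Applying \cref{fact:abc} with $a:=u$, $b:=v$, $c:=x_k$ gives
\[
\innp{u-v,\,x_k-v} = \frac12\left(\norm{u-v}^{2} + \norm{x_k-v}^{2} - \norm{x_k-u}^{2}\right),
\]
whose right-hand side converges as $k\to\infty$; hence the scalar sequence $\left(\innp{u-v,\,x_k}\right)_{k\in\mathbf{N}}$ converges, say to $\ell\in\mathbf{R}$. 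Evaluating along a subsequence $x_{k_n}\weakly u$ yields $\ell=\innp{u-v,\,u}$, and along a subsequence $x_{m_n}\weakly v$ yields $\ell=\innp{u-v,\,v}$; subtracting gives $\norm{u-v}^{2}=\innp{u-v,\,u-v}=0$, so $u=v$.

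Finally I would upgrade ``unique weak sequential cluster point $u\in C$'' to ``$x_k\weakly u$''. If $(x_k)_{k\in\mathbf{N}}$ did not converge weakly to $u$, some weak neighborhood of $u$ would be missed by a subsequence of $(x_k)_{k\in\mathbf{N}}$; that subsequence is bounded, hence has a further weakly convergent subsequence whose limit is a weak sequential cluster point of $(x_k)_{k\in\mathbf{N}}$ distinct from $u$, contradicting uniqueness. Therefore $x_k\weakly u\in C$.

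I expect the only genuine subtlety to be the uniqueness argument — specifically, recognizing that a convergent difference of norm-squared sequences forces $\left(\innp{u-v,x_k}\right)_{k\in\mathbf{N}}$ to converge, and then reading off the limit along the two relevant subsequences; the rest is routine reflexivity/boundedness bookkeeping. If one prefers to avoid \cref{fact:abc}, the same conclusion follows from the direct expansion $\norm{x_k-u}^{2}-\norm{x_k-v}^{2}=2\innp{x_k,\,v-u}+\norm{u}^{2}-\norm{v}^{2}$, but invoking \cref{fact:abc} keeps the write-up self-contained.
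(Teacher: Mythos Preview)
Your proof is correct and is essentially the classical argument (often attributed to Opial) that one finds in the cited reference \cite[Lemma~2.47]{BauschkCombettes2017}. Note, however, that the paper does not supply its own proof of this statement: it is recorded as a \emph{Fact} with a citation and is used as a black box in the proof of \cref{theorem:weakconverge}, so there is nothing in the paper to compare your argument against beyond the external reference.
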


%

\begin{lemma} \label{lemma:fxkykx*y*}
	Let $f: \mathcal{H}_{1} \times \mathcal{H}_{2} \to \mathbf{R} \cup \{-\infty, +\infty\}$ 
	satisfy that $(\forall y \in \mathcal{H}_{2} )$ $f(\cdot, y)$ is convex 
	and $(\forall x \in \mathcal{H}_{1})$ $f(x,\cdot)$ is concave. 
	Let $(x^{*},y^{*})$ be a saddle-point of $f$, let $((x^{k},y^{k}))_{k \in \mathbf{N}}$ 
	be in $\mathcal{H}_{1}\times \mathcal{H}_{2}$, and let $(\forall k \in 
	\mathbf{N})$ $t_{k} \in \mathbf{R}_{+}$ with $t_{0} \in \mathbf{R}_{++}$.
	Set 
\begin{align} \label{eq:lemma:fxkykx*y*}
		(\forall k \in \mathbf{N} \smallsetminus \{0\}) \quad 
		\hat{x}_{k} := \frac{1}{\sum^{k-1}_{i=0}t_{i}} 
		\sum^{k-1}_{j=0}t_{j}x^{j+1} 
		\quad \text{and} \quad 
		\hat{y}_{k} := \frac{1}{\sum^{k-1}_{i=0}t_{i}} 
		\sum^{k-1}_{j=0}t_{j}y^{j+1}.
\end{align}
	Then we have that for every $k \in \mathbf{N}$,
	\begin{subequations}
		\begin{align}
			&f(\hat{x}_{k},\hat{y}_{k}) -f(x^{*},y^{*}) 
			\leq  \frac{1}{\sum^{k-1}_{i=0}t_{i}} 
			\sum^{k-1}_{j=0} t_{j} \left( f(x^{j+1}, \hat{y}_{k})  
			-f(x^{*},y^{j+1}) \right); \label{eq:lemma:fxkykx*y*:x}\\
			&f(x^{*},y^{*}) -f(\hat{x}_{k},\hat{y}_{k})
			\leq  \frac{1}{\sum^{k-1}_{i=0}t_{i}} 
			\sum^{k-1}_{j=0} t_{j} \left( f(x^{j+1},y^{*})  
			-f(\hat{x}_{k},y^{j+1}) \right).\label{eq:lemma:fxkykx*y*:y}
		\end{align}
	\end{subequations}
Consequently, if
\begin{align*}
	&\lim_{k \to \infty}   \frac{1}{\sum^{k-1}_{i=0}t_{i}} 
	\sum^{k-1}_{j=0}t_{j} \left( f(x^{j+1}, 
	\hat{y}_{k})  -f(x^{*},y^{j+1}) \right) =0, \text{ and}\\
	& \lim_{k \to \infty}  \frac{1}{\sum^{k-1}_{i=0}t_{i}} 
	\sum^{k-1}_{j=0}t_{j} \left( 
	f(x^{j+1},y^{*})  
	-f(\hat{x}_{k},y^{j+1}) \right)=0,
\end{align*} 
then 
$\lim_{k \to \infty} f(\hat{x}_{k},\hat{y}_{k}) = f(x^{*},y^{*})$.
\end{lemma}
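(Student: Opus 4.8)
The plan is to obtain both estimates directly from Jensen's inequality for the convex partial functions $f(\cdot, y)$ and the concave partial functions $f(x, \cdot)$, paired with the two scalar inequalities in the definition \eqref{eq:saddle-point} of a saddle-point; the ergodic limit statement then follows by a squeeze argument. First I would fix $k \in \mathbf{N} \smallsetminus \{0\}$ and set $T_{k} := \sum_{i=0}^{k-1} t_{i}$, which is strictly positive since $t_{0} \in \mathbf{R}_{++}$ and $t_{i} \in \mathbf{R}_{+}$ for all $i$; then $\lambda_{j} := t_{j} / T_{k}$ satisfies $\lambda_{j} \geq 0$ and $\sum_{j=0}^{k-1} \lambda_{j} = 1$, and by \eqref{eq:lemma:fxkykx*y*} the points $\hat{x}_{k} = \sum_{j=0}^{k-1} \lambda_{j} x^{j+1}$ and $\hat{y}_{k} = \sum_{j=0}^{k-1} \lambda_{j} y^{j+1}$ are genuine convex combinations of the iterates.

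To prove \eqref{eq:lemma:fxkykx*y*:x}: convexity of $f(\cdot, \hat{y}_{k})$ together with Jensen's inequality gives the \emph{upper} bound $f(\hat{x}_{k}, \hat{y}_{k}) \leq \sum_{j=0}^{k-1} \lambda_{j} f(x^{j+1}, \hat{y}_{k})$, while the left half of \eqref{eq:saddle-point}, applied with $y := y^{j+1}$, gives $f(x^{*}, y^{j+1}) \leq f(x^{*}, y^{*})$ for each $j$, and hence, using $\sum_{j} \lambda_{j} = 1$, $\sum_{j=0}^{k-1} \lambda_{j} f(x^{*}, y^{j+1}) \leq f(x^{*}, y^{*})$. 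Subtracting the second estimate from the first and rewriting $\lambda_{j} = t_{j} / T_{k}$ produces \eqref{eq:lemma:fxkykx*y*:x}. For \eqref{eq:lemma:fxkykx*y*:y} I would run the mirror-image argument: concavity of $f(\hat{x}_{k}, \cdot)$ and Jensen's inequality give the \emph{lower} bound $f(\hat{x}_{k}, \hat{y}_{k}) \geq \sum_{j=0}^{k-1} \lambda_{j} f(\hat{x}_{k}, y^{j+1})$, the right half of \eqref{eq:saddle-point} applied with $x := x^{j+1}$ gives $f(x^{*}, y^{*}) \leq f(x^{j+1}, y^{*})$ and hence $f(x^{*}, y^{*}) \leq \sum_{j=0}^{k-1} \lambda_{j} f(x^{j+1}, y^{*})$, and subtracting yields \eqref{eq:lemma:fxkykx*y*:y}.

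For the concluding claim, \eqref{eq:lemma:fxkykx*y*:x} together with the first limit hypothesis forces $\limsup_{k \to \infty} \bigl( f(\hat{x}_{k}, \hat{y}_{k}) - f(x^{*}, y^{*}) \bigr) \leq 0$, and \eqref{eq:lemma:fxkykx*y*:y} together with the second limit hypothesis forces $\limsup_{k \to \infty} \bigl( f(x^{*}, y^{*}) - f(\hat{x}_{k}, \hat{y}_{k}) \bigr) \leq 0$, i.e.\ $\liminf_{k \to \infty} \bigl( f(\hat{x}_{k}, \hat{y}_{k}) - f(x^{*}, y^{*}) \bigr) \geq 0$; combining the two gives $\lim_{k \to \infty} f(\hat{x}_{k}, \hat{y}_{k}) = f(x^{*}, y^{*})$.

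I do not anticipate a genuine obstacle: once the correct convexity/concavity is matched with the correct half of the saddle-point inequality, each of \eqref{eq:lemma:fxkykx*y*:x} and \eqref{eq:lemma:fxkykx*y*:y} is a two-line computation. The only items needing care are bookkeeping ones: using \emph{convexity in the first variable} (hence an upper bound on $f(\hat{x}_{k}, \hat{y}_{k})$) for \eqref{eq:lemma:fxkykx*y*:x} versus \emph{concavity in the second variable} (hence a lower bound) for \eqref{eq:lemma:fxkykx*y*:y}; checking $T_{k} > 0$ so that the $\lambda_{j}$ are legitimate weights; and observing that $f(x^{*}, y^{*}) \in \mathbf{R}$ because $(x^{*}, y^{*})$ is a saddle-point, so that the subtractions and the extended-real-valued Jensen inequalities are unambiguous, with the eventual finiteness of $f(\hat{x}_{k}, \hat{y}_{k})$ in the limit statement following from the two displayed bounds once their right-hand sides are finite.
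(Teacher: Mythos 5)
Your proposal is correct and follows essentially the same route as the paper: Jensen's inequality applied to the convex section $f(\cdot,\hat{y}_{k})$ and the concave section $f(\hat{x}_{k},\cdot)$, combined with the saddle-point inequality and a final squeeze argument. The only cosmetic difference is that you invoke the saddle-point inequality pointwise at each $y^{j+1}$ (resp.\ $x^{j+1}$) and then average, whereas the paper first bounds $f(x^{*},y^{*})$ by $f(x^{*},\hat{y}_{k})$ (resp.\ $-f(\hat{x}_{k},y^{*})$) and then applies Jensen again; the two orderings yield the same displayed estimates.
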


\begin{proof}
	Let $k$ be in $\mathbf{N}$.
	Because $(\forall y \in \mathcal{H}_{2} )$ $f(\cdot, y)$ is convex 
	and $(\forall x \in \mathcal{H}_{1})$ $f(x,\cdot)$ is concave, we have that 
	for every $(x,y) \in  \mathcal{H}_{1} \times \mathcal{H}_{2}$,
	\begin{subequations} \label{eq:lemma:fxkykx*y*:cc}
		\begin{align}
			&f(\hat{x}_{k},y) \stackrel{\cref{eq:lemma:fxkykx*y*}}{=} 
			f\left( \frac{1}{\sum^{k-1}_{i=0}t_{i}} 
			\sum^{k-1}_{j=0} t_{j}x^{j+1},y \right) \leq
			 \frac{1}{\sum^{k-1}_{i=0}t_{i}} 
			\sum^{k-1}_{j=0}f(x^{j+1},y); \label{eq:lemma:fxkykx*y*:cc:x}\\
			&f(x,\hat{y}_{k})  \stackrel{\cref{eq:lemma:fxkykx*y*}}{=} 
			f\left(x,  \frac{1}{\sum^{k-1}_{i=0}t_{i}} 
			\sum^{k-1}_{j=0}t_{j}y^{j+1} \right) \geq 
			 \frac{1}{\sum^{k-1}_{i=0}t_{i}} 
			\sum^{k-1}_{j=0}t_{j}f(x,y^{j+1}). 
			\label{eq:lemma:fxkykx*y*:cc:y}
		\end{align}
	\end{subequations}
	Substitute $y$ and $x$ in \cref{eq:lemma:fxkykx*y*:cc:x} and 
	\cref{eq:lemma:fxkykx*y*:cc:y}, respectively, by $\hat{y}_{k}$ and $\hat{x}_{k}$
	to get that
	\begin{subequations} \label{eq:lemma:fxkykx*y*:xkyk}
		\begin{align}
			&-f(\hat{x}_{k},\hat{y}_{k})  \geq	 \frac{1}{\sum^{k-1}_{i=0}t_{i}} 
			\sum^{k-1}_{j=0}t_{j} \left( -f(x^{j+1},\hat{y}_{k}) \right);
			\label{eq:lemma:fxkykx*y*:xkyk:x}\\
			&f(\hat{x}_{k},\hat{y}_{k})  \geq 
			 \frac{1}{\sum^{k-1}_{i=0}t_{i}} 
			\sum^{k-1}_{j=0}t_{j}f(\hat{x}_{k},y^{j+1}). 
			\label{eq:lemma:fxkykx*y*:xkyk:y}
		\end{align}
	\end{subequations}
	Apply the fact that $(x^{*},y^{*})$ is a saddle-point of $f$ in the first inequalities of 
	the 	following two inequalities, and replace $y$ and $x$ in 
	\cref{eq:lemma:fxkykx*y*:cc:x} and 
	\cref{eq:lemma:fxkykx*y*:cc:y}, respectively, with $y^{*}$ and $x^{*}$  to obtain that
	\begin{subequations}
		\begin{align}
			&-f(x^{*},y^{*}) \geq -f(\hat{x}_{k},y^{*}) \geq   \frac{1}{\sum^{k-1}_{i=0}t_{i}} 
			\sum^{k-1}_{j=0}t_{j} \left( -f(x^{j+1},y^{*})\right);\label{eq:lemma:fxkykx*y*:x*}\\
			&f(x^{*},y^{*}) \geq f(x^{*},\hat{y}_{k})  \geq 	 \frac{1}{\sum^{k-1}_{i=0}t_{i}} 
			\sum^{k-1}_{j=0} t_{j}f(x^{*},y^{j+1}).\label{eq:lemma:fxkykx*y*:y*}
		\end{align}
	\end{subequations}
	Add \cref{eq:lemma:fxkykx*y*:xkyk:x} and \cref{eq:lemma:fxkykx*y*:y*} to get 
	\cref{eq:lemma:fxkykx*y*:x}. 
	Similarly, by adding \cref{eq:lemma:fxkykx*y*:xkyk:y} and 
	\cref{eq:lemma:fxkykx*y*:x*}, 
	we derive \cref{eq:lemma:fxkykx*y*:y}.
\end{proof}

 \section{Main Results} \label{section:MainResults}

\subsection{Settings}
Let $\mathcal{H}_{1}$ and $\mathcal{H}_{2}$ be real Hilbert spaces, 
let $\Phi: \mathcal{H}_{1} \times \mathcal{H}_{2} \to \mathbf{R} \cup \{- \infty, +\infty\}$
be a coupling function with 
$\dom \Phi := \{  (x,y) \in  \mathcal{H}_{1} \times \mathcal{H}_{2} ~:~ 
 \Phi(x,y) \in \mathbf{R} \} \neq \varnothing$,
 and let $(\forall i \in \{1,2\})$ $f_{i}: \mathcal{H}_{i} \to \mathbf{R} \cup \{+\infty\}$
 be regularizing functions. Define $f : \mathcal{H}_{1} \times \mathcal{H}_{2} \to 
 \mathbf{R} \cup \{-\infty, +\infty\}$ as 
\begin{align} \label{eq:def:f}
	 (\forall (x,y) \in \mathcal{H}_{1} \times \mathcal{H}_{2} ) \quad f(x,y) 
	 =f_{1}(x) +\Phi (x,y) -f_{2}(y).
\end{align}

\begin{assumption}\label{assumption}
	In the rest of this work, we have the following assumptions.
\begin{itemize}
	\item $(\forall i \in \{1,2\})$ $f_{i}$ is proper, lower semicontinuous, convex with 
	modulus $\mu_{i} \geq 0$, i.e., $f_{i} - \frac{\mu_{i}}{2} \norm{\cdot}^{2}$ is convex 
	$($note that when $\mu_{i} =0$, we have $f_{i}$ is convex, and that when 
	$\mu_{i} >0$, we have $f_{i}$ is $\mu_{i}$-strongly convex$)$.
	\item $\dom f_{1}$ and $\dom f_{2}$ are nonempty, closed, and convex. 
	\item $(\forall y \in \dom f_{2})$ $\Phi(\cdot, y) :\mathcal{H}_{1} \to \mathbf{R}  $ is 
	proper, convex, and Fr\'{e}chet differentiable.
	\item $(\forall x \in \dom f_{1})$ $\Phi(x,\cdot)  :\mathcal{H}_{2} \to \mathbf{R} $ is 
	proper, concave, and Fr\'{e}chet differentiable.
	\item There exist $L_{xx}$, $L_{xy}$, $L_{yx}$, and $L_{yy}$ in $\mathbf{R}_{+}$  
	such that for every $(x,y) \in \dom f_{1} \times  \dom f_{2}$ and $(x',y') \in \dom f_{1} 
	\times  \dom f_{2}$ such that 
	\begin{subequations}\label{eq:nablaPhixy}
		\begin{align}
			&\norm{\nabla_{x}\Phi(x,y) - \nabla_{x}\Phi(x',y')} \leq L_{xx}\norm{x -x'} 
			+L_{xy}\norm{y-y'}; \label{eq:nablaPhix}\\
			&\norm{\nabla_{y}\Phi(x,y) - \nabla_{y}\Phi(x',y')} \leq L_{yx}\norm{x -x'} 
			+L_{yy}\norm{y-y'}.\label{eq:nablaPhiy}
		\end{align}
	\end{subequations}
\end{itemize}
\end{assumption}

According to our assumptions above, the following statements are  equivalent. 
\begin{enumerate}
	\item $(x^{*},y^{*}) \in \mathcal{H}_{1} \times \mathcal{H}_{2}$  is a  saddle point  of 
	$f$.
	\item $(x^{*},y^{*}) \in \dom f_{1} \times \dom f_{2}$ and for every $(x,y) \in  
	\mathcal{H}_{1} \times \mathcal{H}_{2}$,
\[ 
		f_{1}(x^{*}) +\Phi(x^{*},y) -f_{2}(y) \leq f_{1}(x^{*}) +\Phi(x^{*},y^{*}) 
		-f_{2}(y^{*}) \leq f_{1}(x) +\Phi(x,y^{*}) -f_{2}(y^{*}).
\]
	\item $0 \in \partial f_{1}(x^{*}) +\nabla_{x}\Phi (x^{*},y^{*})$ and  
	$0 \in - \nabla_{y}\Phi (x^{*},y^{*}) + \partial f_{2}(x^{*})$. 
\end{enumerate}

We apply the convention that $+\infty - (+\infty) :=-\infty$ in this work. So we have that 
\begin{align*}
	f(x,y) =
	\begin{cases}
			f_{1}(x) +\Phi (x,y) -f_{2}(y) \quad &\text{if }x \in \dom f_{1}\text{ and } y \in \dom 
		f_{2},\\
		- \infty \quad &\text{if }y \notin \dom f_{2},\\
		+\infty \quad &\text{if }x \notin \dom f_{1} \text{ and } y \in \dom f_{2}.
	\end{cases}
\end{align*}
Similarly with the result $(2.6)$ on \cite[Page~243]{RockafellarMinimax1964}, we have 
$\dom f = \dom f_{1} \times \dom f_{2}$.
\begin{lemma} \label{lemma:TMM}
Define the operator	$T: \mathcal{H}_{1} \times \mathcal{H}_{2} \to 2^{\mathcal{H}_{1} 
\times 	\mathcal{H}_{2} }$  as 
\begin{align} \label{eq:lemma:TMM}
	(\forall (\bar{x},\bar{y}) \in \mathcal{H}_{1} \times \mathcal{H}_{2})  \quad
	T (\bar{x},\bar{y})  = \partial_{x}f(\bar{x},\bar{y}) \times \partial_{y}(-f(\bar{x},\bar{y})).
\end{align}
Then $T$ is maximally monotone. 
\end{lemma}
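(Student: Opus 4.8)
The plan is to exhibit $T$ as a sum $T = A + B$ in which $A$ is a convex subdifferential and $B$ is the monotone coupling operator attached to $\Phi$, to check that both summands are monotone, and then to derive maximality from Minty's range characterization rather than from a sum theorem (the latter is not directly available, since $B$ need not be defined off $\dom f_{1} \times \dom f_{2}$). Throughout put $C := \dom f_{1} \times \dom f_{2}$, which is nonempty, closed, and convex by \cref{assumption}, and let $g : \mathcal{H}_{1} \times \mathcal{H}_{2} \to \mathbf{R} \cup \{+\infty\}$, $g(x,y) := f_{1}(x) + f_{2}(y)$, which is proper, lower semicontinuous, and convex with $\dom g = C$.

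First I would unpack $T$. Let $(\bar{x}, \bar{y}) \in C$. The function $x \mapsto f(x, \bar{y})$ is the sum of $f_{1}$, of the real-valued convex Fr\'{e}chet-differentiable (hence continuous) function $\Phi(\cdot, \bar{y})$, and of the constant $-f_{2}(\bar{y})$, so the sum rule for subdifferentials (Moreau--Rockafellar) gives $\partial_{x} f(\bar{x}, \bar{y}) = \partial f_{1}(\bar{x}) + \nabla_{x} \Phi(\bar{x}, \bar{y})$, and symmetrically $\partial_{y}(-f(\bar{x}, \bar{y})) = \partial f_{2}(\bar{y}) - \nabla_{y}\Phi(\bar{x}, \bar{y})$. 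Moreover $\partial_{x} f(\bar{x}, \bar{y})$ can be nonempty only when $x \mapsto f(x, \bar{y})$ is proper with $\bar{x}$ in its domain, which forces $(\bar{x}, \bar{y}) \in C$ (and similarly for $\partial_{y}(-f)$), so $\dom T \subseteq C$. Hence $T = A + B$, where $A(\bar{x}, \bar{y}) := \partial f_{1}(\bar{x}) \times \partial f_{2}(\bar{y}) = \partial g(\bar{x}, \bar{y})$ and $B(\bar{x}, \bar{y}) := \left(\nabla_{x}\Phi(\bar{x}, \bar{y}),\, -\nabla_{y}\Phi(\bar{x}, \bar{y})\right)$ for $(\bar{x}, \bar{y}) \in C$; in particular $\dom T = \dom \partial g = \dom A \subseteq C = \dom B$.

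Next I would handle the summands. By Rockafellar's theorem on maximal monotonicity of convex subdifferentials, $A = \partial g$ is maximally monotone; in particular it is monotone, and for every $\gamma \in \mathbf{R}_{++}$ the resolvent $\Prox_{\gamma g} = (\Id + \gamma \partial g)^{-1}$ is single-valued, nonexpansive, everywhere defined on $\mathcal{H}_{1} \times \mathcal{H}_{2}$, with $\operatorname{ran} \Prox_{\gamma g} = \dom \partial g \subseteq C$. For the monotonicity of $B$ on $C$, take $\bar{z} = (x,y)$ and $\bar{z}' = (x',y')$ in $C$, write the subgradient inequalities expressing convexity of $\Phi(\cdot, y)$ and of $\Phi(\cdot, y')$ and concavity of $\Phi(x, \cdot)$ and of $\Phi(x', \cdot)$ relating the values at $(x,y), (x,y'), (x',y), (x',y')$, and add the four: the $\Phi$-values cancel and what remains is exactly $\innp{B(\bar{z}) - B(\bar{z}'), \bar{z} - \bar{z}'} \geq 0$. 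By \cref{eq:nablaPhixy} there is $L \in \mathbf{R}_{+}$ with $\norm{B(\bar{z}) - B(\bar{z}')} \leq L \norm{\bar{z} - \bar{z}'}$ for all $\bar{z}, \bar{z}' \in C$. Since $\dom A \subseteq \dom B$, adding the monotonicity inequalities of $A$ and of $B$ shows $T = A + B$ is monotone.

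Finally I would verify Minty's range condition. Fix $\gamma \in \mathbf{R}_{++}$ with $\gamma L < 1$ (any $\gamma \in \mathbf{R}_{++}$ if $L = 0$). Given $(p, q) \in \mathcal{H}_{1} \times \mathcal{H}_{2}$, the map $R : C \to C$, $R(\bar{z}) := \Prox_{\gamma g}\!\left((p,q) - \gamma B(\bar{z})\right)$, is well defined because $\dom B = C$ and $\operatorname{ran}\Prox_{\gamma g} \subseteq C$, and it is $\gamma L$-Lipschitz, hence a contraction on $C$, which is nonempty and complete (a nonempty closed subset of a Hilbert space); let $\bar{z}^{*} \in C$ be its unique fixed point. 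Then $(p,q) - \gamma B(\bar{z}^{*}) \in (\Id + \gamma \partial g)(\bar{z}^{*}) = \bar{z}^{*} + \gamma A(\bar{z}^{*})$, so, as $\bar{z}^{*} \in \dom A \subseteq \dom B$, $(p,q) \in \bar{z}^{*} + \gamma A(\bar{z}^{*}) + \gamma B(\bar{z}^{*}) = (\Id + \gamma T)(\bar{z}^{*})$. Thus $\Id + \gamma T$ is surjective, and since $\gamma T$ is monotone, Minty's theorem yields that $\gamma T$, hence also $T$, is maximally monotone. I expect the real obstacle to be exactly this last step: the textbook ``maximally monotone $+$ everywhere-defined continuous monotone'' sum theorem does not apply, because $B$ is only defined on $C$, so one has to check Minty's condition directly; the argument goes through because $\Prox_{\gamma g}$ is everywhere defined with range in $C$ and a damped step turns the fixed-point map into a contraction of the complete set $C$. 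The remaining ingredients---the subdifferential sum rule, the convex--concave monotonicity computation, and the Lipschitz bound from \cref{eq:nablaPhixy}---are routine.
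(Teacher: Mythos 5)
Your proof is correct, but it follows a genuinely different route from the paper's. The paper disposes of the lemma in one step: it observes that $L(x,y)=f_{1}(x)+\Phi(x,y)-f_{2}(y)$ is a closed saddle function on $\dom f_{1}\times\dom f_{2}$ (lower semicontinuous in $x$, upper semicontinuous in $y$) and invokes Rockafellar's theorem on monotone operators associated with saddle functions (\cite[Corollary~2]{RockafellarMonotone1970}), which directly yields maximal monotonicity of $T$. You instead give a self-contained argument: the splitting $T=\partial g+B$ with $g(x,y)=f_{1}(x)+f_{2}(y)$ and $B=(\nabla_{x}\Phi,-\nabla_{y}\Phi)$ (the same unpacking the paper itself records in \cref{lemma:proxf1f2ineq}\cref{lemma:proxf1f2ineq:x}$\&$\cref{lemma:proxf1f2ineq:y}), the four-subgradient-inequality cancellation for monotonicity of $B$, the Lipschitz bound from \cref{eq:nablaPhixy}, and a direct verification of Minty's range condition via a Banach contraction on the complete set $C=\dom f_{1}\times\dom f_{2}$. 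I checked the delicate points: the Moreau--Rockafellar sum rule applies because $\Phi(\cdot,\bar{y})$ is finite-valued and Fr\'echet differentiable, hence continuous; the cancellation computation does give $\innp{B(\bar z)-B(\bar z'),\bar z-\bar z'}\geq 0$; and your observation that the standard ``maximal $+$ everywhere-defined continuous monotone'' sum theorem is unavailable (since $B$ lives only on $C$) is accurate, and your workaround through $\operatorname{ran}\Prox_{\gamma g}\subseteq C$ with a damped step $\gamma L<1$ is sound. What the paper's approach buys is brevity at the cost of importing a specialized result on saddle-function subdifferentials; what yours buys is a fully elementary proof from first principles (Minty plus Banach) that also makes explicit where the Lipschitz hypothesis \cref{eq:nablaPhixy} enters. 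Both are valid proofs of the lemma.
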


\begin{proof}	
	Set the operator $L:   \dom f_{1} \times \dom f_{2} \to \mathbf{R}$ as 
	$(\forall (x,y) \in \dom 
	f_{1} \times \dom f_{2})$ $L(x,y) =f_{1}(x) +\Phi (x,y) -f_{2}(y)$. 
	Based on our assumptions, $(\forall x \in \dom f_{1})$ 
	$L(x, \cdot)$ is upper semicontinuous and $(\forall y \in \dom f_{2})$ $L(\cdot, y)$
	is lower semicontinuous.  Hence,
	applying  \cite[Corollary~2]{RockafellarMonotone1970} with $Y= \mathcal{H}_{1}$,
	$Z=\mathcal{H}_{2}$, $C=\dom f_{1}$, $D=\dom f_{2}$, and $L=L$ defined above, 
	we obtain that  the operator $T$ defined in \cref{eq:lemma:TMM} is 
	maximally monotone.
\end{proof}

\subsection{Algorithm} 
We present our  Alternating Proximal Point 
Algorithm with Gradient Descent and Ascent Steps below.

Henceforth, let $(x^{0},y^{0}) \in \dom f_{1} \times \dom f_{2}$, 
and  let $(\tau_{k})_{k \in \mathbf{N}}$, 
$(\sigma_{k})_{k \in \mathbf{N}}$, 
$(\lambda_{k})_{k \in \mathbf{N}}$, 
and $(\theta_{k})_{k \in \mathbf{N}}$ be in $\mathbf{R}_{++}$. 
Set 
$x^{-1}=x^{0}$ and $y^{-1} =y^{0}$. 
In this work, we consider the sequence of iterations generated by the following 
scheme: for every $k \in \mathbf{N}$,
\begin{subequations}\label{eq:algorithmproxif1f2}
	\begin{align}
		&x^{k+1} = \Prox_{\tau_{k}f_{1}}
		\left(
		x^{k} -\tau_{k} 
		\left( (1+\lambda_{k}) \nabla_{x}\Phi(x^{k},y^{k}) 
		-\lambda_{k} \nabla_{x}\Phi(x^{k-1},y^{k-1})
		\right) \right);\label{eq:algorithmproxif1f2:x}\\ 
		&y^{k+1} = \Prox_{\sigma_{k}f_{2}}
		\left(
		y^{k} +\sigma_{k} 
		\left( (1+\theta_{k}) \nabla_{y}\Phi(x^{k},y^{k})
		-\theta_{k} \nabla_{y}\Phi(x^{k-1},y^{k-1})
		\right)\right). \label{eq:algorithmproxif1f2:y}
	\end{align}
\end{subequations}

Based on \cref{eq:algorithmproxif1f2}, the sequence $((x^{k},y^{k}))_{k \in 
\mathbf{N}}$ 
of iterates must be in $\dom f_{1} \times \dom f_{2}$.

\subsection{Auxiliary Results}
\begin{lemma}\label{lemma:proxf1f2ineq}
	Let $k \in \mathbf{N}$. We have the following results.
	\begin{enumerate}
		\item \label{lemma:proxf1f2ineq:x} $ \frac{x^{k} -x^{k+1}}{\tau_{k}}  -\lambda_{k} 
		\left(  
		\nabla_{x}\Phi(x^{k},y^{k}) 
		-\nabla_{x}\Phi(x^{k-1},y^{k-1})  \right) + \left( \nabla_{x} \Phi(x^{k+1},y^{k+1}) 
		-\nabla_{x} \Phi(x^{k},y^{k}) \right) \\
		 	\in  \partial f_{1}(x^{k+1}) +\nabla_{x} \Phi(x^{k+1},y^{k+1}) = \partial_{x} 
		 	f(x^{k+1},y^{k+1})$.
		\item \label{lemma:proxf1f2ineq:y} $\frac{y^{k} -y^{k+1}}{\sigma_{k}} 	+\theta_{k} 
		\left(  
		\nabla_{y}\Phi(x^{k},y^{k}) 
		-\nabla_{y}\Phi(x^{k-1},y^{k-1})  \right) 
		- \left( \nabla_{y} \Phi(x^{k+1},y^{k+1}) 
		-\nabla_{y} \Phi(x^{k},y^{k}) \right)\\
		   \in -\nabla_{y}\Phi(x^{k+1},y^{k+1})+\partial f_{2}(y^{k+1}) = \partial_{y} \left(- 
		   f(x^{k+1},y^{k+1}) \right)$.
		  \item \label{lemma:proxf1f2ineq:subdiffx}
		  $(\forall x \in \mathcal{H}_{1})$
		  $f_{1}(x) -f_{1}(x^{k+1}) \geq \frac{1}{2\tau_{k}} 
		  \left( \norm{x^{k}-x^{k+1}}^{2} + \norm{x-x^{k+1}}^{2} -\norm{x^{k}-x}^{2}    
		  \right)
		  + \\
		  \frac{\mu_{1}}{2} \norm{x-x^{k+1}}^{2}
		  -  
		  \innp{\lambda_{k} 
		  	\left(  
		  	\nabla_{x}\Phi(x^{k},y^{k}) 
		  	-\nabla_{x}\Phi(x^{k-1},y^{k-1})  \right)  
		  	+\nabla_{x} \Phi(x^{k},y^{k})  , x-x^{k+1}}$.
		  
		  \item \label{lemma:proxf1f2ineq:subdiffy}
		  $(\forall y \in \mathcal{H}_{2})$
		  $f_{2}(y) -f_{2}(y^{k+1}) \geq \frac{1}{2\sigma_{k}} \left( 
		  \norm{y^{k}-y^{k+1}}^{2} 
		  +\norm{y-y^{k+1}} -\norm{y^{k}-y}^{2} \right) +\\
		  \frac{\mu_{2}}{2}\norm{y- y^{k+1}}^{2} +
		   \innp{\theta_{k} 	\left(  
		  	\nabla_{y}\Phi(x^{k},y^{k}) 
		  	-\nabla_{y}\Phi(x^{k-1},y^{k-1})  \right) 
		  	+\nabla_{y} \Phi(x^{k},y^{k}) ,y-y^{k+1}  }.$
	\end{enumerate}
\end{lemma}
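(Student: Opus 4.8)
The plan is to read each of the four assertions directly off the defining relation of the proximal steps in \cref{eq:algorithmproxif1f2}, and for the last two items to turn the resulting subgradient inclusion into the stated inequality by combining the strong-convexity subgradient inequality with \cref{fact:abc}. The only facts I need are: the characterization $p = \Prox_{\gamma g}(u) \Leftrightarrow \frac{1}{\gamma}(u-p) \in \partial g(p)$, valid for proper lower semicontinuous convex $g$ and $\gamma > 0$; the elementary identity $(1+t)a - tb = a + t(a-b)$; and the subdifferential sum rule.

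First I would prove \cref{lemma:proxf1f2ineq:x}. Applying the proximity characterization to \cref{eq:algorithmproxif1f2:x} with $g = f_{1}$, $\gamma = \tau_{k}$ and $u = x^{k} - \tau_{k}\bigl((1+\lambda_{k})\nabla_{x}\Phi(x^{k},y^{k}) - \lambda_{k}\nabla_{x}\Phi(x^{k-1},y^{k-1})\bigr)$ yields
\[
\frac{x^{k}-x^{k+1}}{\tau_{k}} - (1+\lambda_{k})\nabla_{x}\Phi(x^{k},y^{k}) + \lambda_{k}\nabla_{x}\Phi(x^{k-1},y^{k-1}) \in \partial f_{1}(x^{k+1}).
\]
Adding $\nabla_{x}\Phi(x^{k+1},y^{k+1})$ to both sides and regrouping the left-hand side by $(1+\lambda_{k})a - \lambda_{k}b = a + \lambda_{k}(a-b)$ with $a = \nabla_{x}\Phi(x^{k},y^{k})$, $b = \nabla_{x}\Phi(x^{k-1},y^{k-1})$ produces exactly the displayed inclusion. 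The asserted equality $\partial f_{1}(x^{k+1}) + \nabla_{x}\Phi(x^{k+1},y^{k+1}) = \partial_{x}f(x^{k+1},y^{k+1})$ is the subdifferential sum rule for $f(\cdot,y^{k+1}) = f_{1}(\cdot) + \Phi(\cdot,y^{k+1}) - f_{2}(y^{k+1})$; it holds with no constraint qualification because $\Phi(\cdot,y^{k+1})$ is real-valued convex and Fréchet differentiable (hence continuous) on $\mathcal{H}_{1}$, and because $x^{k+1} \in \dom f_{1}$ as it lies in the range of $\Prox_{\tau_{k}f_{1}}$. Item \cref{lemma:proxf1f2ineq:y} is symmetric: apply the characterization to \cref{eq:algorithmproxif1f2:y}, then \emph{subtract} $\nabla_{y}\Phi(x^{k+1},y^{k+1})$ from both sides, regroup with the same identity (now with $\theta_{k}$), and identify the right-hand side with $\partial f_{2}(y^{k+1}) - \nabla_{y}\Phi(x^{k+1},y^{k+1}) = \partial_{y}\bigl(-f(x^{k+1},y^{k+1})\bigr)$, using that $-\Phi(x^{k+1},\cdot)$ is real-valued convex and Fréchet differentiable with gradient $-\nabla_{y}\Phi(x^{k+1},\cdot)$.

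For \cref{lemma:proxf1f2ineq:subdiffx} I would take the specific subgradient $s_{k} := \frac{x^{k}-x^{k+1}}{\tau_{k}} - \lambda_{k}\bigl(\nabla_{x}\Phi(x^{k},y^{k}) - \nabla_{x}\Phi(x^{k-1},y^{k-1})\bigr) - \nabla_{x}\Phi(x^{k},y^{k}) \in \partial f_{1}(x^{k+1})$ extracted above (before adding $\nabla_{x}\Phi(x^{k+1},y^{k+1})$). Since $f_{1}$ is convex with modulus $\mu_{1}$, the strong-convexity subgradient inequality gives, for all $x \in \mathcal{H}_{1}$,
\[
f_{1}(x) - f_{1}(x^{k+1}) \geq \innp{s_{k}, x - x^{k+1}} + \frac{\mu_{1}}{2}\norm{x - x^{k+1}}^{2},
\]
and it only remains to expand $\innp{s_{k}, x - x^{k+1}}$: the term $\frac{1}{\tau_{k}}\innp{x^{k} - x^{k+1}, x - x^{k+1}}$ equals $\frac{1}{2\tau_{k}}\bigl(\norm{x^{k} - x^{k+1}}^{2} + \norm{x - x^{k+1}}^{2} - \norm{x^{k} - x}^{2}\bigr)$ by \cref{fact:abc} with $a = x^{k}$, $b = x^{k+1}$, $c = x$, and the remaining contribution is $-\innp{\lambda_{k}(\nabla_{x}\Phi(x^{k},y^{k}) - \nabla_{x}\Phi(x^{k-1},y^{k-1})) + \nabla_{x}\Phi(x^{k},y^{k}), x - x^{k+1}}$, which is the term in the claim. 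Item \cref{lemma:proxf1f2ineq:subdiffy} is obtained identically from the subgradient $\frac{y^{k}-y^{k+1}}{\sigma_{k}} + \theta_{k}\bigl(\nabla_{y}\Phi(x^{k},y^{k}) - \nabla_{y}\Phi(x^{k-1},y^{k-1})\bigr) + \nabla_{y}\Phi(x^{k},y^{k}) \in \partial f_{2}(y^{k+1})$, the $\mu_{2}$-strong convexity of $f_{2}$, and \cref{fact:abc} with $a = y^{k}$, $b = y^{k+1}$, $c = y$; note that here the $\theta_{k}$-difference and $\nabla_{y}\Phi(x^{k},y^{k})$ enter with a $+$ sign, matching the gradient-ascent step (and the $\norm{y-y^{k+1}}$ in the statement should read $\norm{y-y^{k+1}}^{2}$).

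The argument is essentially bookkeeping. The two points that call for care are the sign conventions induced by the ascent step in $y$ (a descent step ``minus a gradient'' versus an ascent step ``plus a gradient''), and the justification of the exact sum-rule equalities in \cref{lemma:proxf1f2ineq:x} and \cref{lemma:proxf1f2ineq:y}, where Fréchet differentiability of $\Phi$ in each argument is what lets us drop any constraint qualification. Beyond keeping the $\pm$ signs straight I do not anticipate a real obstacle.
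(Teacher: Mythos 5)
Your proposal is correct and follows essentially the same route as the paper: the proximal optimality condition $0\in\partial f_{1}(x^{k+1})+\frac{1}{\tau_{k}}\bigl(x^{k+1}-u\bigr)$ rearranged into the subgradient inclusion, the add-and-regroup step with $\nabla_{x}\Phi(x^{k+1},y^{k+1})$ for items (i)--(ii), and the strong-convexity subgradient inequality combined with \cref{fact:abc} for items (iii)--(iv). Your observation that $\norm{y-y^{k+1}}$ in item (iv) should read $\norm{y-y^{k+1}}^{2}$ is also consistent with the paper's own displayed computation, which carries the squared norm.
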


\begin{proof}
\cref{lemma:proxf1f2ineq:x}: 	According to the iterate scheme 
\cref{eq:algorithmproxif1f2:x} and the definition of 
	proximity mapping, we observe that
	\begin{align*}
	&x^{k+1} = \argmin_{x \in \mathcal{H}_{1}} f_{1}(x) + \frac{1}{2\tau_{k}} 
		\norm{x-  	\left(
			x^{k} -\tau_{k} 
			\left( (1+\lambda_{k}) \nabla_{x}\Phi(x^{k},y^{k}) 
			-\lambda_{k} \nabla_{x}\Phi(x^{k-1},y^{k-1})
			\right) \right) }^{2}\\
\Leftrightarrow &
0\in \partial f_{1}(x^{k+1})  +\frac{1}{\tau_{k}} 
\left(x^{k+1} -  \left(
x^{k} -\tau_{k} 
\left( (1+\lambda_{k}) \nabla_{x}\Phi(x^{k},y^{k})
-\lambda_{k} \nabla_{x}\Phi(x^{k-1},y^{k-1})
\right)    \right) \right) \\
\Leftrightarrow &
\frac{x^{k} -x^{k+1}}{\tau_{k}} -
\left( (1+\lambda_{k}) \nabla_{x}\Phi(x^{k},y^{k})
-\lambda_{k} \nabla_{x}\Phi(x^{k-1},y^{k-1})
\right)   \in \partial f_{1}(x^{k+1})\\
\Leftrightarrow &
\frac{x^{k} -x^{k+1}}{\tau_{k}}  -\lambda_{k} \left(  \nabla_{x}\Phi(x^{k},y^{k}) 
-\nabla_{x}\Phi(x^{k-1},y^{k-1})  \right) + \left( \nabla_{x} \Phi(x^{k+1},y^{k+1}) 
-\nabla_{x} \Phi(x^{k},y^{k}) \right) 
 	\\
 	& \quad \quad \quad \quad \quad \quad \in  \partial f_{1}(x^{k+1}) +\nabla_{x} 
 	\Phi(x^{k+1},y^{k+1}).
	\end{align*}
\cref{lemma:proxf1f2ineq:y}: In view of  the iterate scheme 
\cref{eq:algorithmproxif1f2:y} 
and the definition of proximity mapping, we have that
\begin{align*}
&	y^{k+1} =\argmin_{y \in \mathcal{H}_{2}} f_{2}(y) + \frac{1}{2 \sigma_{k}}
	\norm{y-    \left(
		y^{k} +\sigma_{k} 
		\left( (1+\theta_{k}) \nabla_{y}\Phi(x^{k},y^{k})
		-\theta_{k} \nabla_{y}\Phi(x^{k-1},y^{k-1})
		\right) \right) }^{2}\\
	\Leftrightarrow & 
	0 \in \partial f_{2} (y^{k+1}) +\frac{1}{\sigma_{k}} 
	\left(y^{k+1} -  \left(
	y^{k} +\sigma_{k} 
	\left( (1+\theta_{k}) \nabla_{y}\Phi(x^{k},y^{k})
	-\theta_{k} \nabla_{y}\Phi(x^{k-1},y^{k-1})
	\right) \right)   \right)\\
 \Leftrightarrow & 
		\frac{y^{k} -y^{k+1}}{\sigma_{k}} +
		\left( (1+\theta_{k}) \nabla_{y}\Phi(x^{k},y^{k})
		-\theta_{k} \nabla_{y}\Phi(x^{k-1},y^{k-1})
		\right)
		\in \partial f_{2}(y^{k+1})\\
 \Leftrightarrow & 
 \frac{y^{k} -y^{k+1}}{\sigma_{k}} 	+\theta_{k} \left(  \nabla_{y}\Phi(x^{k},y^{k}) 
 -\nabla_{y}\Phi(x^{k-1},y^{k-1})  \right) 
 - \left( \nabla_{y} \Phi(x^{k+1},y^{k+1}) 
 -\nabla_{y} \Phi(x^{k},y^{k}) \right)\\
 &\quad \quad \quad \quad \quad \quad \in -\nabla_{y}\Phi(x^{k+1},y^{k+1})+\partial 
 f_{2}(y^{k+1}).
\end{align*}
\cref{lemma:proxf1f2ineq:subdiffx}: Due to \cref{lemma:proxf1f2ineq:x}, we notice that 
\[
	\frac{x^{k} -x^{k+1}}{\tau_{k}}  -\lambda_{k} 
	\left(  
	\nabla_{x}\Phi(x^{k},y^{k}) 
	-\nabla_{x}\Phi(x^{k-1},y^{k-1})  \right)  
	-\nabla_{x} \Phi(x^{k},y^{k}) 
	\in  \partial f_{1}(x^{k+1}).  
 \]
Combine this with the assumption that $f_{1}$ is convex with modulus $\mu_{1}$ to 
derive that for every $x \in \mathcal{H}_{1}$,
\begin{align*}
	&	f_{1}(x) -f_{1}(x^{k+1}) \geq \frac{1}{\tau_{k}}\innp{x^{k}-x^{k+1},x-x^{k+1}}
	+\frac{\mu_{1}}{2} \norm{x-x^{k+1}}^{2} -\\
	& \innp{\lambda_{k} 
		\left(  
		\nabla_{x}\Phi(x^{k},y^{k}) 
		-\nabla_{x}\Phi(x^{k-1},y^{k-1})  \right)  
		+\nabla_{x} \Phi(x^{k},y^{k})  , x-x^{k+1}}\\
	\Leftrightarrow &
	f_{1}(x) -f_{1}(x^{k+1}) \geq \frac{1}{2\tau_{k}} 
	\left( \norm{x^{k}-x^{k+1}}^{2} + \norm{x-x^{k+1}}^{2} -\norm{x^{k}-x}^{2}    \right)
	+\frac{\mu_{1}}{2} \norm{x-x^{k+1}}^{2} -\\
	&
	 \innp{\lambda_{k} 
		\left(  
		\nabla_{x}\Phi(x^{k},y^{k}) 
		-\nabla_{x}\Phi(x^{k-1},y^{k-1})  \right)  
		+\nabla_{x} \Phi(x^{k},y^{k})  , x-x^{k+1}}.
\end{align*}
where in the last equivalence, we use \cref{fact:abc}.
 
\cref{lemma:proxf1f2ineq:subdiffy}: Based on \cref{lemma:proxf1f2ineq:y}, we have that
\[
	\frac{y^{k} -y^{k+1}}{\sigma_{k}} 	+\theta_{k} 
	\left(  
	\nabla_{y}\Phi(x^{k},y^{k}) 
	-\nabla_{y}\Phi(x^{k-1},y^{k-1})  \right) 
	+\nabla_{y} \Phi(x^{k},y^{k}) 
	\in \partial f_{2}(y^{k+1}).
 \]
Similarly with the proof of 
\cref{lemma:proxf1f2ineq:subdiffx} above, applying \cref{fact:abc} and the assumption
that $f_{2}$ is convex with modulus $\mu_{2}$, we derive that for every 
$y \in \mathcal{H}_{2}$,
\begin{align*}
	&f_{2}(y) -f_{2}(y^{k+1}) \geq \frac{1}{\sigma_{k}} \innp{y^{k} -y^{k+1}, y-y^{k+1}}
	+\frac{\mu_{2}}{2}\norm{y- y^{k+1}}^{2} +\\
	& \innp{\theta_{k} 	\left(  
		\nabla_{y}\Phi(x^{k},y^{k}) 
		-\nabla_{y}\Phi(x^{k-1},y^{k-1})  \right) 
		+\nabla_{y} \Phi(x^{k},y^{k}) ,y-y^{k+1}  }\\
	\Leftrightarrow &
	f_{2}(y) -f_{2}(y^{k+1}) \geq \frac{1}{2\sigma_{k}} \left( \norm{y^{k}-y^{k+1}}^{2} 
	+\norm{y-y^{k+1}}^{2} -\norm{y^{k}-y}^{2} \right) +
	\frac{\mu_{2}}{2}\norm{y- y^{k+1}}^{2} +\\
	& \innp{\theta_{k} 	\left(  
		\nabla_{y}\Phi(x^{k},y^{k}) 
		-\nabla_{y}\Phi(x^{k-1},y^{k-1})  \right) 
		+\nabla_{y} \Phi(x^{k},y^{k}) ,y-y^{k+1}  }.
\end{align*}
\end{proof}

\begin{lemma}\label{lemma:proxf1f2preliminary}
	Let $(\bar{x}, \bar{y}) \in \dom f_{1} \times \dom f_{2}$
	and let $(\hat{x},\hat{y}) \in\dom f_{1} \times \dom f_{2}$. 
	We have the following statements. 
	\begin{enumerate}
		\item \label{lemma:proxf1f2preliminary:diff} 
		$( \forall (x,y)\in 
		\mathcal{H}_{1}\times \mathcal{H}_{2} )$
		$\innp{\nabla_{x} 
			\Phi(\bar{x},\bar{y}), x-\bar{x}}  \leq \Phi(x, \bar{y}) -\Phi(\bar{x},\bar{y}) $
		and  $\innp{-\nabla_{y}\Phi(\bar{x},\bar{y}),y-\bar{y}}  \leq -\Phi(\bar{x},y) 
		+\Phi(\bar{x},\bar{y})$.
		\item \label{lemma:proxf1f2preliminary:Lipschitzx}
		Let $\alpha$ and $\beta$ be in $\mathbf{R}_{++}$.
		Then for every $x	\in \mathcal{H}_{1}$,
\begin{align*}
	&\abs{\innp{\nabla_{x} \Phi(\bar{x},\bar{y}) - 
			\nabla_{x}\Phi(\hat{x},\hat{y}),x-\bar{x}}}\\
		 \leq & \frac{L_{xx}}{2} \left( \alpha 
	\norm{x 
		-\bar{x}}^{2} +\frac{1}{\alpha} \norm{\bar{x} 
		-\hat{x}}^{2}   \right) + \frac{L_{xy}}{2} \left( \beta \norm{x -\bar{x}}^{2} 
	+\frac{1}{\beta} 
	\norm{\bar{y} -\hat{y}}^{2}  \right).
\end{align*}
		\item \label{lemma:proxf1f2preliminary:Lipschitzy}
		 Let  $\gamma$ and $\delta$ be in $\mathbf{R}_{++}$.
	Then for every $y \in \mathcal{H}_{2}$,
\begin{align*}
	&\abs{ \innp{\nabla_{y} \Phi(\bar{x},\bar{y}) - 
			\nabla_{y}\Phi(\hat{x},\hat{y}),y-\bar{y}}  } \\
		\leq & \frac{L_{yx}}{2} \left( \gamma 
	\norm{y -\bar{y}}^{2} + \frac{1}{\gamma} 
	\norm{\bar{x} -\hat{x}}^{2} \right)  
	+ \frac{L_{yy}}{2}  \left( \delta \norm{y -\bar{y}}^{2} +\frac{1}{\delta}\norm{\bar{y} 
		-\hat{y}}^{2}  \right).
\end{align*}
	\end{enumerate}
\end{lemma}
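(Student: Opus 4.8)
The plan is to treat the three parts in order, each reducing to a standard inequality. For \cref{lemma:proxf1f2preliminary:diff}, I would invoke the gradient inequality for differentiable convex functions. Since $(\bar{x},\bar{y}) \in \dom f_{1} \times \dom f_{2}$, \cref{assumption} guarantees that $\Phi(\cdot,\bar{y})$ is convex and Fr\'echet differentiable on $\mathcal{H}_{1}$, so for every $x \in \mathcal{H}_{1}$ we have $\Phi(x,\bar{y}) \geq \Phi(\bar{x},\bar{y}) + \innp{\nabla_{x}\Phi(\bar{x},\bar{y}), x-\bar{x}}$, which rearranges to the first claimed inequality. Dually, $\Phi(\bar{x},\cdot)$ is concave and Fr\'echet differentiable, hence $-\Phi(\bar{x},\cdot)$ is convex and differentiable, giving $-\Phi(\bar{x},y) \geq -\Phi(\bar{x},\bar{y}) + \innp{-\nabla_{y}\Phi(\bar{x},\bar{y}), y-\bar{y}}$, which is the second claimed inequality.

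For \cref{lemma:proxf1f2preliminary:Lipschitzx}, I would first apply the Cauchy--Schwarz inequality to get $\abs{\innp{\nabla_{x}\Phi(\bar{x},\bar{y}) - \nabla_{x}\Phi(\hat{x},\hat{y}), x-\bar{x}}} \leq \norm{\nabla_{x}\Phi(\bar{x},\bar{y}) - \nabla_{x}\Phi(\hat{x},\hat{y})}\,\norm{x-\bar{x}}$, then bound the first factor via the assumed Lipschitz estimate \cref{eq:nablaPhix} (applicable because $(\bar{x},\bar{y})$ and $(\hat{x},\hat{y})$ both lie in $\dom f_{1} \times \dom f_{2}$) by $L_{xx}\norm{\bar{x}-\hat{x}} + L_{xy}\norm{\bar{y}-\hat{y}}$. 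This produces the upper bound $L_{xx}\norm{\bar{x}-\hat{x}}\norm{x-\bar{x}} + L_{xy}\norm{\bar{y}-\hat{y}}\norm{x-\bar{x}}$, and I would finish by applying Young's inequality $ab \leq \tfrac{\alpha}{2}a^{2} + \tfrac{1}{2\alpha}b^{2}$ to the first product (with parameter $\alpha$, taking $a = \norm{x-\bar{x}}$, $b = \norm{\bar{x}-\hat{x}}$) and with parameter $\beta$ to the second product, then collecting terms. \cref{lemma:proxf1f2preliminary:Lipschitzy} is entirely parallel: replace $\nabla_{x}$ by $\nabla_{y}$ throughout, use \cref{eq:nablaPhiy} in place of \cref{eq:nablaPhix}, and apply Young's inequality with parameters $\gamma$ and $\delta$.

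I do not expect a genuine obstacle here: every step is a textbook estimate (the gradient inequality for differentiable convex functions, Cauchy--Schwarz, the assumed Lipschitz continuity of the partial gradients, and Young's inequality). The only point requiring a little care is that all the points at which $\nabla\Phi$ is evaluated must lie in $\dom f_{1} \times \dom f_{2}$ so that both the Fr\'echet differentiability in \cref{assumption} and the Lipschitz bounds \cref{eq:nablaPhixy} are available; this is exactly what the hypotheses $(\bar{x},\bar{y}), (\hat{x},\hat{y}) \in \dom f_{1} \times \dom f_{2}$ of the lemma provide.
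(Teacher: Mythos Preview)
Your proposal is correct and follows essentially the same route as the paper: part~\cref{lemma:proxf1f2preliminary:diff} via the gradient inequality for the convex functions $\Phi(\cdot,\bar{y})$ and $-\Phi(\bar{x},\cdot)$, and parts~\cref{lemma:proxf1f2preliminary:Lipschitzx} and~\cref{lemma:proxf1f2preliminary:Lipschitzy} by Cauchy--Schwarz, the Lipschitz assumption \cref{eq:nablaPhixy}, and Young's inequality with the indicated parameters. Your explicit remark about the domain hypothesis ensuring applicability of \cref{eq:nablaPhixy} is a nice addition that the paper leaves implicit.
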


\begin{proof}
	\cref{lemma:proxf1f2preliminary:diff}: Because $\Phi(\cdot, \bar{y})$ and 
	$-\Phi(\bar{x}, \cdot)$ are convex, we know that for every 	 $ (x,y)\in 
	\mathcal{H}_{1}\times \mathcal{H}_{2} $,
\[
	 \innp{\nabla_{x} 	\Phi(\bar{x},\bar{y}), x-\bar{x}}  \leq \Phi(x, \bar{y}) 
	 -\Phi(\bar{x},\bar{y})  
		\text{ and }  \innp{-\nabla_{y}\Phi(\bar{x},\bar{y}),y-\bar{y}}  \leq -\Phi(\bar{x},y) 
		+\Phi(\bar{x},\bar{y}).
 \]

	\cref{lemma:proxf1f2preliminary:Lipschitzx}: 
	Based on the Cauchy-Schwarz inequality 
	and our assumption 	\cref{eq:nablaPhix}, we observe that
	\begin{align*}
		&\abs{\innp{\nabla_{x} \Phi(\bar{x},\bar{y}) - 
		\nabla_{x}\Phi(\hat{x},\hat{y}),x-\bar{x}}} \\
		\leq & \norm{\nabla_{x} \Phi(\bar{x},\bar{y}) 
		- \nabla_{x}\Phi(\hat{x},\hat{y})}\norm{x-\bar{x}}\\
	 \leq & \left(L_{xx}\norm{\bar{x} -\hat{x}} 	+L_{xy}\norm{\bar{y} 
	-\hat{y}}\right)\norm{x-\bar{x}} \\
 \leq & \frac{L_{xx}}{2} \left( \alpha \norm{x -\bar{x}}^{2} +\frac{1}{\alpha} \norm{\bar{x} 
-\hat{x}}^{2}   \right) + \frac{L_{xy}}{2} \left( \beta \norm{x -\bar{x}}^{2} +\frac{1}{\beta} 
\norm{\bar{y} -\hat{y}}^{2}  \right).
	\end{align*}

\cref{lemma:proxf1f2preliminary:Lipschitzy}: Similarly, by the Cauchy-Schwarz 
inequality 
and our assumption 	\cref{eq:nablaPhiy}, we have that
\begin{align*}
	&\abs{ \innp{\nabla_{y} \Phi(\bar{x},\bar{y}) - 
			\nabla_{y}\Phi(\hat{x},\hat{y}),y-\bar{y}}  }\\
\leq & \norm{\nabla_{y} \Phi(\bar{x},\bar{y}) - 
		\nabla_{y}\Phi(\hat{x},\hat{y})} \norm{y -\bar{y}}\\
\leq & 	\left(L_{yx}\norm{\bar{x} -\hat{x}} 	+L_{yy}\norm{\bar{y} 	-\hat{y}}\right)
\norm{y -\bar{y}}\\
\leq & \frac{L_{yx}}{2} \left( \gamma \norm{y -\bar{y}}^{2} + \frac{1}{\gamma} 
\norm{\bar{x} -\hat{x}}^{2} \right)  
+ \frac{L_{yy}}{2}  \left( \delta \norm{y -\bar{y}}^{2} +\frac{1}{\delta}\norm{\bar{y} 
-\hat{y}}^{2}  \right).
\end{align*}
\end{proof}

\begin{proposition}\label{prop:fxk+1yxyk+1}
	Let $(x,y)$ be in $\mathcal{H}_{1} \times \mathcal{H}_{2}$, and let $(\alpha_{k})_{k 
	\in \mathbf{N}}$, $(\beta_{k})_{k \in \mathbf{N}}$, $(\gamma_{k})_{k \in 
	\mathbf{N}}$, $(\delta_{k})_{k \in \mathbf{N}}$ be in $\mathbf{R}_{++}$. Let $k$ be 
	in 	$\mathbf{N}$. Then
\[ 
		f(x^{k+1},y) -f(x,y^{k+1}) \leq a_{k}(x,y) -b_{k+1}(x,y) -c_{k},
\]
where 
\begin{subequations}\label{lemma:fxk+1yxyk+1:ak}
	\begin{align}
		&a_{k}(x,y) \\
		:=&  \frac{1}{2\tau_{k}} \norm{x-x^{k}}^{2} 
		+\frac{1}{2\sigma_{k}}\norm{y-y^{k}}^{2} \\
		&+\left( \frac{\lambda_{k}L_{xx}}{2\alpha_{k}} + 
		\frac{\theta_{k}L_{yx}}{2\gamma_{k}} \right) \norm{x^{k}-x^{k-1}}^{2} 
		+\left(\frac{\lambda_{k}L_{xy}}{2\beta_{k}}+\frac{\theta_{k}L_{yy}}{2\delta_{k}} 
		\right) 
		\norm{y^{k}-y^{k-1}}^{2}\\
		&+\scalemath{0.95}{ \lambda_{k} \innp{\nabla_{x}\Phi(x^{k},y^{k}) 
			-\nabla_{x}\Phi(x^{k-1},y^{k-1}),x-x^{k}}   -\theta_{k} \innp{	 
			\nabla_{y}\Phi(x^{k},y^{k}) 
			-\nabla_{y}\Phi(x^{k-1},y^{k-1}),y-y^{k}  } },
	\end{align}
\end{subequations}
\begin{subequations}\label{lemma:fxk+1yxyk+1:bk}
	\begin{align}
		&b_{k+1}(x,y)\\
		:=& \frac{1}{2} \left(\frac{1}{\tau_{k}}+\mu_{1}\right)
		\norm{x-x^{k+1}}^{2}  
		+\frac{1}{2} \left(\frac{1}{\sigma_{k}} +\mu_{2} \right)\norm{y-y^{k+1}}^{2} 
		\\
		&+\left( \frac{ L_{xx}}{2\alpha_{k+1}} + 
		\frac{ L_{yx}}{2\gamma_{k+1}} \right) \norm{x^{k+1}-x^{k}}^{2} 
		+\left(\frac{L_{xy}}{2\beta_{k+1}}+\frac{L_{yy}}{2\delta_{k+1}}  \right) 
		\norm{y^{k+1}-y^{k}}^{2}\\
		& +\innp{\nabla_{x}\Phi 	(x^{k+1},y^{k+1})-\nabla_{x} \Phi(x^{k},y^{k}), x-x^{k+1}} 
		-\innp{\nabla_{y}\Phi(x^{k+1},y^{k+1})-\nabla_{y} \Phi(x^{k},y^{k}), y-y^{k+1} },
	\end{align}
\end{subequations}
and
\begin{subequations}\label{lemma:fxk+1yxyk+1:ck}
	\begin{align}
		c_{k}:=	 
		&\frac{1}{2}\left(\frac{1}{\tau_{k}}- \lambda_{k} \alpha_{k}L_{xx} -
		\lambda_{k}\beta_{k}L_{xy}- \frac{ L_{xx}}{ \alpha_{k+1}} -
		\frac{ L_{yx}}{ \gamma_{k+1}}  \right) \norm{x^{k+1}-x^{k}}^{2} \\
		&+\frac{1}{2 } \left(\frac{1}{\sigma_{k}} -\theta_{k}\gamma_{k}L_{yx} 
		-\theta_{k}\delta_{k}L_{yy}
		-\frac{L_{xy}}{\beta_{k+1}}-\frac{L_{yy}}{\delta_{k+1}}  
		\right) 
		\norm{y^{k+1}-y^{k}}^{2}.
	\end{align}
\end{subequations}
\end{proposition}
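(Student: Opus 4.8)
The plan is to decompose, bound term by term, and then match coefficients. Using \cref{eq:def:f}, write
$f(x^{k+1},y)-f(x,y^{k+1})=\bigl(f_{1}(x^{k+1})-f_{1}(x)\bigr)+\bigl(f_{2}(y^{k+1})-f_{2}(y)\bigr)+\bigl(\Phi(x^{k+1},y)-\Phi(x,y^{k+1})\bigr)$.
I would bound the first two brackets by negating the inequalities in \cref{lemma:proxf1f2ineq:subdiffx} and \cref{lemma:proxf1f2ineq:subdiffy} evaluated at the given $(x,y)$; this yields the completed-square terms $-\tfrac{1}{2\tau_{k}}\norm{x^{k}-x^{k+1}}^{2}-\tfrac{1}{2}(\tfrac{1}{\tau_{k}}+\mu_{1})\norm{x-x^{k+1}}^{2}+\tfrac{1}{2\tau_{k}}\norm{x-x^{k}}^{2}$, its $y$-analogue with $\sigma_{k},\mu_{2}$, and the inner products $\innp{\lambda_{k}(\nabla_{x}\Phi(x^{k},y^{k})-\nabla_{x}\Phi(x^{k-1},y^{k-1}))+\nabla_{x}\Phi(x^{k},y^{k}),x-x^{k+1}}$ and $-\innp{\theta_{k}(\nabla_{y}\Phi(x^{k},y^{k})-\nabla_{y}\Phi(x^{k-1},y^{k-1}))+\nabla_{y}\Phi(x^{k},y^{k}),y-y^{k+1}}$. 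For the third bracket I would apply \cref{lemma:proxf1f2preliminary:diff} with $(\bar{x},\bar{y})=(x^{k+1},y^{k+1})$ (a legitimate choice since the iterates lie in $\dom f_{1}\times\dom f_{2}$) to obtain $\Phi(x^{k+1},y)-\Phi(x,y^{k+1})\le\innp{\nabla_{y}\Phi(x^{k+1},y^{k+1}),y-y^{k+1}}-\innp{\nabla_{x}\Phi(x^{k+1},y^{k+1}),x-x^{k+1}}$.

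Adding the three bounds, the current-iterate gradients pair up: $\innp{\nabla_{x}\Phi(x^{k},y^{k}),x-x^{k+1}}-\innp{\nabla_{x}\Phi(x^{k+1},y^{k+1}),x-x^{k+1}}=-\innp{\nabla_{x}\Phi(x^{k+1},y^{k+1})-\nabla_{x}\Phi(x^{k},y^{k}),x-x^{k+1}}$, which is precisely minus the corresponding inner product in $b_{k+1}$, and the $y$-side is symmetric. It remains to handle the lagged-gradient terms, which I would split as $x-x^{k+1}=(x-x^{k})+(x^{k}-x^{k+1})$ and $y-y^{k+1}=(y-y^{k})+(y^{k}-y^{k+1})$: the $(x-x^{k})$ and $(y-y^{k})$ pieces are exactly the inner products in $a_{k}$, while $\lambda_{k}\innp{\nabla_{x}\Phi(x^{k},y^{k})-\nabla_{x}\Phi(x^{k-1},y^{k-1}),x^{k}-x^{k+1}}$ and $-\theta_{k}\innp{\nabla_{y}\Phi(x^{k},y^{k})-\nabla_{y}\Phi(x^{k-1},y^{k-1}),y^{k}-y^{k+1}}$ I would estimate via \cref{lemma:proxf1f2preliminary:Lipschitzx} and \cref{lemma:proxf1f2preliminary:Lipschitzy}, in each case with base point $(x^{k},y^{k})$, companion point $(x^{k-1},y^{k-1})$, test point $x^{k+1}$ respectively $y^{k+1}$, and splitting parameters $\alpha_{k},\beta_{k}$ respectively $\gamma_{k},\delta_{k}$. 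This produces the $\norm{x^{k}-x^{k-1}}^{2}$ and $\norm{y^{k}-y^{k-1}}^{2}$ coefficients of $a_{k}$ (the $\lambda_{k}$-contribution from the $x$-estimate combining with the $\theta_{k}$-contribution from the $y$-estimate) together with negative multiples of $\norm{x^{k+1}-x^{k}}^{2}$ and $\norm{y^{k+1}-y^{k}}^{2}$ carrying the coefficients $\tfrac{\lambda_{k}\alpha_{k}L_{xx}}{2}+\tfrac{\lambda_{k}\beta_{k}L_{xy}}{2}$ and $\tfrac{\theta_{k}\gamma_{k}L_{yx}}{2}+\tfrac{\theta_{k}\delta_{k}L_{yy}}{2}$.

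Finally I would collect everything and identify the right-hand side with $a_{k}(x,y)-b_{k+1}(x,y)-c_{k}$. The one point worth flagging is that $b_{k+1}$ and $c_{k}$ each carry the same ``phantom'' quantities $\bigl(\tfrac{L_{xx}}{2\alpha_{k+1}}+\tfrac{L_{yx}}{2\gamma_{k+1}}\bigr)\norm{x^{k+1}-x^{k}}^{2}$ and $\bigl(\tfrac{L_{xy}}{2\beta_{k+1}}+\tfrac{L_{yy}}{2\delta_{k+1}}\bigr)\norm{y^{k+1}-y^{k}}^{2}$ with opposite signs, so in $-b_{k+1}-c_{k}$ they cancel and the surviving genuine coefficients of $\norm{x^{k+1}-x^{k}}^{2}$ and $\norm{y^{k+1}-y^{k}}^{2}$ are $-\tfrac{1}{2}(\tfrac{1}{\tau_{k}}-\lambda_{k}\alpha_{k}L_{xx}-\lambda_{k}\beta_{k}L_{xy})$ and $-\tfrac{1}{2}(\tfrac{1}{\sigma_{k}}-\theta_{k}\gamma_{k}L_{yx}-\theta_{k}\delta_{k}L_{yy})$, which is exactly what the second step delivered. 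The main obstacle is entirely the bookkeeping: keeping the signs straight (in particular the extra minus on every $y$-side term coming from $-f_{2}$ and from $\partial_{y}(-f)$), routing each Young parameter to the right norm, and noticing the phantom cancellation so that no separate estimate of the $\alpha_{k+1},\beta_{k+1},\gamma_{k+1},\delta_{k+1}$ terms is needed. No idea beyond \cref{lemma:proxf1f2ineq}, \cref{lemma:proxf1f2preliminary} and \cref{fact:abc} (the latter already folded into \cref{lemma:proxf1f2ineq}) is required.
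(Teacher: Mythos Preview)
Your proposal is correct and follows essentially the same route as the paper: decompose via \cref{eq:def:f}, bound $\Phi(x^{k+1},y)-\Phi(x,y^{k+1})$ using \cref{lemma:proxf1f2preliminary}\cref{lemma:proxf1f2preliminary:diff} at $(x^{k+1},y^{k+1})$, bound the $f_1,f_2$ pieces using \cref{lemma:proxf1f2ineq}\cref{lemma:proxf1f2ineq:subdiffx}$\&$\cref{lemma:proxf1f2ineq:subdiffy}, split $x-x^{k+1}=(x-x^{k})+(x^{k}-x^{k+1})$ (and the $y$-analogue), and handle the resulting $(x^{k}-x^{k+1})$ inner products via \cref{lemma:proxf1f2preliminary}\cref{lemma:proxf1f2preliminary:Lipschitzx}$\&$\cref{lemma:proxf1f2preliminary:Lipschitzy} with exactly the parameter choices you indicate. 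Your explicit remark on the ``phantom'' $\alpha_{k+1},\beta_{k+1},\gamma_{k+1},\delta_{k+1}$ cancellation between $b_{k+1}$ and $c_k$ is a helpful clarification that the paper leaves implicit in its final equality.
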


\begin{proof}
	Applying \cref{lemma:proxf1f2preliminary}\cref{lemma:proxf1f2preliminary:diff} with 
	$(\bar{x},\bar{y}) =(x^{k+1},y^{k+1}) $, we 
	observe that
	\begin{subequations}\label{eq:lemma:fxk+1yxyk+1}
		\begin{align}
			&\Phi (x^{k+1},y) - \Phi (x, y^{k+1}) \\
			=& \Phi (x^{k+1},y) -\Phi (x^{k+1},y^{k+1}) +  \Phi 
			(x^{k+1},y^{k+1}) - \Phi (x, y^{k+1})\\
			\leq &\innp{\nabla_{y}\Phi(x^{k+1},y^{k+1}), y-y^{k+1} } -\innp{\nabla_{x}\Phi 
				(x^{k+1},y^{k+1}), x-x^{k+1}}.
		\end{align}  
	\end{subequations}
Employing \cref{lemma:proxf1f2preliminary}\cref{lemma:proxf1f2preliminary:Lipschitzx} 
with $(\bar{x},\bar{y}) = (x^{k},y^{k})$, $(\hat{x},\hat{y}) = (x^{k-1},y^{k-1})$, $x=x^{k
+1}$, and 
$(\alpha, \beta) =(\alpha_{k},\beta_{k})$, we 
have that 
\begin{subequations}\label{eq:lemma:nablax}
	\begin{align}
	&  \innp{\nabla_{x}\Phi(x^{k},y^{k})-\nabla_{x}\Phi(x^{k-1},y^{k-1}),x^{k}-x^{k+1}}  \\
	\leq & \frac{L_{xx}}{2} \left( \alpha_{k} 
	\norm{x^{k+1} -x^{k}}^{2} +\frac{1}{\alpha_{k}} \norm{x^{k}
		-x^{k-1}}^{2}   \right) + \frac{L_{xy}}{2} \left( \beta_{k} \norm{x^{k+1} -x^{k}}^{2} 
	+\frac{1}{\beta_{k}} 
	\norm{y^{k} -y^{k-1}}^{2}  \right). 
	\end{align}
\end{subequations}
Similarly, invoke 
\cref{lemma:proxf1f2preliminary}\cref{lemma:proxf1f2preliminary:Lipschitzy} with  
$(\bar{x},\bar{y}) = (x^{k},y^{k})$, $(\hat{x},\hat{y}) = (x^{k-1},y^{k-1})$, $y=y^{k+1}$,
and $(\gamma, \delta)=(\gamma_{k}, \delta_{k})$ to get that
\begin{subequations}\label{eq:lemma:nablay}
	\begin{align}
	&  -\innp{\nabla_{y}\Phi(x^{k},y^{k}) -\nabla_{y}\Phi(x^{k-1},y^{k-1}),y^{k}-y^{k+1}  }\\
	\leq &  \frac{L_{yx}}{2} \left( \gamma_{k} 
	\norm{y^{k+1} -y^{k}}^{2} + \frac{1}{\gamma_{k}} 
	\norm{x^{k} -x^{k-1}}^{2} \right)  
	+ \frac{L_{yy}}{2}  \left( \delta_{k} \norm{y^{k+1} -y^{k}}^{2} 
	+\frac{1}{\delta_{k}}\norm{y^{k}-y^{k-1}}^{2}  \right).
	\end{align}
\end{subequations}
	Applying \cref{eq:def:f} in the first equality, \cref{eq:lemma:fxk+1yxyk+1} in the first 
	inequality,  
	\cref{lemma:proxf1f2ineq}\cref{lemma:proxf1f2ineq:subdiffx}$\&$\cref{lemma:proxf1f2ineq:subdiffy}
in the second inequality, and \cref{eq:lemma:nablax}$\&$\cref{eq:lemma:nablay} in the 
third inequality below, we have that
\begin{align*}
		&f(x^{k+1},y) -f(x,y^{k+1})\\
=&f_{1} (x^{k+1}) +\Phi(x^{k+1},y) -f_{2}(y) -f_{1}(x) - \Phi(x,y^{k+1})+f_{2}(y^{k+1})\\
\leq & \scalemath{0.95}{ \innp{\nabla_{y}\Phi(x^{k+1},y^{k+1}), y-y^{k+1} } 
-\innp{\nabla_{x}\Phi 
	(x^{k+1},y^{k+1}), x-x^{k+1}} + f_{1} (x^{k+1})  - f_{1}(x)+f_{2}(y^{k+1})-f_{2}(y) }\\
\leq & \scalemath{0.95}{  \innp{\nabla_{y}\Phi(x^{k+1},y^{k+1}), y-y^{k+1} } 
-\innp{\nabla_{x}\Phi 
	(x^{k+1},y^{k+1}), x-x^{k+1}} -\frac{\mu_{1}}{2} \norm{x-x^{k+1}}^{2} 
	-\frac{\mu_{2}}{2} \norm{y-y^{k+1}}^{2} } \\
&+ \scalemath{0.9}{\frac{1}{2\tau_{k}} 
\left( \norm{x-x^{k}}^{2}  - \norm{x-x^{k+1}}^{2} - \norm{x^{k}-x^{k+1}}^{2} \right)
+\frac{1}{2\sigma_{k}} \left( 
\norm{y-y^{k}}^{2}-\norm{y-y^{k+1}}^{2}  -\norm{y^{k}-y^{k+1}}^{2}  \right)} \\
&+\lambda_{k} \innp{\nabla_{x}\Phi(x^{k},y^{k}) 
	-\nabla_{x}\Phi(x^{k-1},y^{k-1}),x-x^{k+1}} +\innp{\nabla_{x} \Phi(x^{k},y^{k}), 
	x-x^{k+1}} \\
&-\theta_{k} \innp{	 
	\nabla_{y}\Phi(x^{k},y^{k}) 
	-\nabla_{y}\Phi(x^{k-1},y^{k-1}),y-y^{k+1}  } -\innp{\nabla_{y} \Phi(x^{k},y^{k}) 
	,y-y^{k+1}}\\
=&\innp{\nabla_{y}\Phi(x^{k+1},y^{k+1})-\nabla_{y} \Phi(x^{k},y^{k}), y-y^{k+1} } 
-\innp{\nabla_{x}\Phi 	(x^{k+1},y^{k+1})-\nabla_{x} \Phi(x^{k},y^{k}), x-x^{k+1}} \\
&+\scalemath{0.9}{\frac{1}{2\tau_{k}} 
\left( \norm{x-x^{k}}^{2}  - \norm{x-x^{k+1}}^{2} - \norm{x^{k}-x^{k+1}}^{2} \right)
+\frac{1}{2\sigma_{k}} \left( 
\norm{y-y^{k}}^{2}-\norm{y-y^{k+1}}^{2}  -\norm{y^{k}-y^{k+1}}^{2}  \right)} \\
&+\lambda_{k} \innp{\nabla_{x}\Phi(x^{k},y^{k}) 
	-\nabla_{x}\Phi(x^{k-1},y^{k-1}),x-x^{k}}   -\theta_{k} \innp{	 
	\nabla_{y}\Phi(x^{k},y^{k}) 
	-\nabla_{y}\Phi(x^{k-1},y^{k-1}),y-y^{k}  } \\
&+\scalemath{0.9}{\lambda_{k} \innp{\nabla_{x}\Phi(x^{k},y^{k}) 
	-\nabla_{x}\Phi(x^{k-1},y^{k-1}),x^{k}-x^{k+1}}   -\theta_{k} \innp{	 
	\nabla_{y}\Phi(x^{k},y^{k}) 
	-\nabla_{y}\Phi(x^{k-1},y^{k-1}),y^{k}-y^{k+1}  } }\\
&-\frac{\mu_{1}}{2} \norm{x-x^{k+1}}^{2} 
-\frac{\mu_{2}}{2} \norm{y-y^{k+1}}^{2}  \\
\leq &\innp{\nabla_{y}\Phi(x^{k+1},y^{k+1})-\nabla_{y} \Phi(x^{k},y^{k}), y-y^{k+1} } 
-\innp{\nabla_{x}\Phi 	(x^{k+1},y^{k+1})-\nabla_{x} \Phi(x^{k},y^{k}), x-x^{k+1}} \\
&+\scalemath{0.9}{\frac{1}{2\tau_{k}} 
\left( \norm{x-x^{k}}^{2}  - \norm{x-x^{k+1}}^{2} - \norm{x^{k}-x^{k+1}}^{2} \right)
+\frac{1}{2\sigma_{k}} \left( 
\norm{y-y^{k}}^{2}-\norm{y-y^{k+1}}^{2}  -\norm{y^{k}-y^{k+1}}^{2}  \right)} \\
&+\scalemath{0.95}{ \lambda_{k} \innp{\nabla_{x}\Phi(x^{k},y^{k}) 
	-\nabla_{x}\Phi(x^{k-1},y^{k-1}),x-x^{k}}   -\theta_{k} \innp{	 
	\nabla_{y}\Phi(x^{k},y^{k}) 
	-\nabla_{y}\Phi(x^{k-1},y^{k-1}),y-y^{k}  }} \\
&+\scalemath{0.95}{  \frac{\lambda_{k}L_{xx}}{2} \left( \alpha_{k} 
\norm{x^{k+1} -x^{k}}^{2} +\frac{1}{\alpha_{k}} \norm{x^{k}
	-x^{k-1}}^{2}   \right) + \frac{\lambda_{k}L_{xy}}{2} \left( \beta_{k} \norm{x^{k+1} 
	-x^{k}}^{2} 
+\frac{1}{\beta_{k}} 
\norm{y^{k} -y^{k-1}}^{2}  \right) }\\
&+\scalemath{0.95}{  \frac{\theta_{k}L_{yx}}{2} \left( \gamma_{k} 
\norm{y^{k+1} -y^{k}}^{2} + \frac{1}{\gamma_{k}} 
\norm{x^{k} -x^{k-1}}^{2} \right)  
+ \frac{\theta_{k}L_{yy}}{2}  \left( \delta_{k} \norm{y^{k+1} -y^{k}}^{2} 
+\frac{1}{\delta_{k}}\norm{y^{k}-y^{k-1}}^{2}  \right) }\\
&-\frac{\mu_{1}}{2} \norm{x-x^{k+1}}^{2} 
-\frac{\mu_{2}}{2} \norm{y-y^{k+1}}^{2}  \\
&=a_{k}(x,y) -b_{k+1}(x,y) -c_{k},
\end{align*}
where $a_{k}(x,y)$, $b_{k+1}(x,y)$, and $c_{k}$ are defined in 
\cref{lemma:fxk+1yxyk+1:ak}, 
\cref{lemma:fxk+1yxyk+1:bk}, and \cref{lemma:fxk+1yxyk+1:ck}, respectively. 
\end{proof}

\begin{lemma} \label{lemma:akgeq}
	Let $(x,y)$ be in $\mathcal{H}_{1} \times \mathcal{H}_{2}$ and let $k$ be in 
	$\mathbf{N}$.
	Consider  $a_{k}(x,y)$ defined as \cref{lemma:fxk+1yxyk+1:ak}. We have that
\[ 
		a_{k}(x,y) \geq  \frac{1}{2}  \left( \frac{1}{\tau_{k}}  - L_{xx}\lambda_{k}\alpha_{k} 
		-L_{xy}\lambda_{k}\beta_{k}   
		\right)\norm{x-x^{k}}^{2} 
		+\frac{1}{2} \left( \frac{1}{\sigma_{k}} - L_{yx}\theta_{k}\gamma_{k}
		-L_{yy}\theta_{k}\delta_{k} \right)\norm{y-y^{k}}^{2}. 
\]
\end{lemma}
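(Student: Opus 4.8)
The plan is to bound the two inner-product terms in the definition \cref{lemma:fxk+1yxyk+1:ak} of $a_{k}(x,y)$ from below using the Lipschitz estimates already established in \cref{lemma:proxf1f2preliminary}, and then to observe that the auxiliary coefficients in $a_{k}(x,y)$ were chosen precisely so that the "memory" terms $\norm{x^{k}-x^{k-1}}^{2}$ and $\norm{y^{k}-y^{k-1}}^{2}$ cancel exactly.

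First I would invoke \cref{lemma:proxf1f2preliminary}\cref{lemma:proxf1f2preliminary:Lipschitzx} with $(\bar{x},\bar{y})=(x^{k},y^{k})$, $(\hat{x},\hat{y})=(x^{k-1},y^{k-1})$, the given point $x$ in place of the free variable, and $(\alpha,\beta)=(\alpha_{k},\beta_{k})$, to get
\begin{align*}
\lambda_{k}\innp{\nabla_{x}\Phi(x^{k},y^{k})-\nabla_{x}\Phi(x^{k-1},y^{k-1}),x-x^{k}}
\geq{}& -\frac{\lambda_{k}L_{xx}}{2}\left(\alpha_{k}\norm{x-x^{k}}^{2}+\frac{1}{\alpha_{k}}\norm{x^{k}-x^{k-1}}^{2}\right)\\
&-\frac{\lambda_{k}L_{xy}}{2}\left(\beta_{k}\norm{x-x^{k}}^{2}+\frac{1}{\beta_{k}}\norm{y^{k}-y^{k-1}}^{2}\right),
\end{align*}
where I used $\lambda_{k}>0$ together with $\langle\cdot\rangle\geq-\abs{\langle\cdot\rangle}$. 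Symmetrically, applying \cref{lemma:proxf1f2preliminary}\cref{lemma:proxf1f2preliminary:Lipschitzy} with the same $(\bar{x},\bar{y})$, $(\hat{x},\hat{y})$, the given $y$, and $(\gamma,\delta)=(\gamma_{k},\delta_{k})$, and using $\theta_{k}>0$, yields
\begin{align*}
-\theta_{k}\innp{\nabla_{y}\Phi(x^{k},y^{k})-\nabla_{y}\Phi(x^{k-1},y^{k-1}),y-y^{k}}
\geq{}& -\frac{\theta_{k}L_{yx}}{2}\left(\gamma_{k}\norm{y-y^{k}}^{2}+\frac{1}{\gamma_{k}}\norm{x^{k}-x^{k-1}}^{2}\right)\\
&-\frac{\theta_{k}L_{yy}}{2}\left(\delta_{k}\norm{y-y^{k}}^{2}+\frac{1}{\delta_{k}}\norm{y^{k}-y^{k-1}}^{2}\right).
\end{align*}

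Next I would substitute these two lower bounds into \cref{lemma:fxk+1yxyk+1:ak}. The terms $\tfrac{\lambda_{k}L_{xx}}{2\alpha_{k}}\norm{x^{k}-x^{k-1}}^{2}$ and $\tfrac{\theta_{k}L_{yx}}{2\gamma_{k}}\norm{x^{k}-x^{k-1}}^{2}$ are exactly annihilated by the corresponding negative contributions above, and likewise $\tfrac{\lambda_{k}L_{xy}}{2\beta_{k}}\norm{y^{k}-y^{k-1}}^{2}$ and $\tfrac{\theta_{k}L_{yy}}{2\delta_{k}}\norm{y^{k}-y^{k-1}}^{2}$ cancel. What survives is
\[
a_{k}(x,y)\geq\left(\frac{1}{2\tau_{k}}-\frac{L_{xx}\lambda_{k}\alpha_{k}}{2}-\frac{L_{xy}\lambda_{k}\beta_{k}}{2}\right)\norm{x-x^{k}}^{2}+\left(\frac{1}{2\sigma_{k}}-\frac{L_{yx}\theta_{k}\gamma_{k}}{2}-\frac{L_{yy}\theta_{k}\delta_{k}}{2}\right)\norm{y-y^{k}}^{2},
\]
which is the claimed inequality after factoring out $\tfrac12$. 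There is essentially no genuine obstacle here; the only point requiring care is keeping the bookkeeping of the eight auxiliary parameters $\alpha_{k},\beta_{k},\gamma_{k},\delta_{k}$ (and their appearances in both numerator and denominator) straight so that the cancellation is verified term by term.
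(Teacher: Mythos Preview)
Your proposal is correct and follows essentially the same route as the paper: invoke \cref{lemma:proxf1f2preliminary}\cref{lemma:proxf1f2preliminary:Lipschitzx} and \cref{lemma:proxf1f2preliminary:Lipschitzy} with $(\bar{x},\bar{y})=(x^{k},y^{k})$, $(\hat{x},\hat{y})=(x^{k-1},y^{k-1})$ and parameters $(\alpha_{k},\beta_{k},\gamma_{k},\delta_{k})$, substitute into \cref{lemma:fxk+1yxyk+1:ak}, and check the exact cancellation of the $\norm{x^{k}-x^{k-1}}^{2}$ and $\norm{y^{k}-y^{k-1}}^{2}$ terms. If anything, your treatment of the sign on the $-\theta_{k}\langle\cdot\rangle$ term (via $-\theta_{k}\langle\cdot\rangle\geq-\theta_{k}\abs{\langle\cdot\rangle}$) is slightly more explicit than the paper's, which states only the lower bound on $\langle\cdot\rangle$ before multiplying by $-\theta_{k}$.
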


\begin{proof}
	Applying 
	\cref{lemma:proxf1f2preliminary}\cref{lemma:proxf1f2preliminary:Lipschitzx}
	with $(\bar{x},\bar{y}) =(x^{k},y^{k})$, $(\hat{x},\hat{y}) = (x^{k-1},y^{k-1})$, and 
	$(\alpha,\beta) = (\alpha_{k},\beta_{k})$,  we observe that
	\begin{subequations}\label{eq:prop:proxproxdiff:nablax}
		\begin{align}
			&\innp{\nabla_{x}\Phi(x^{k},y^{k}) 	-\nabla_{x}\Phi(x^{k-1},y^{k-1}),x-x^{k}} \\
			\geq &  -\frac{L_{xx}}{2} \left( \alpha_{k}  \norm{x 	-x^{k}}^{2} 
			+\frac{1}{\alpha_{k}} 
			\norm{x^{k}
				-x^{k-1}}^{2}   \right) 
			- \frac{L_{xy}}{2} \left( \beta_{k} \norm{x -x^{k}}^{2} 
			+\frac{1}{\beta_{k}} 
			\norm{y^{k} -y^{k-1}}^{2}  \right).
		\end{align}
	\end{subequations}
	Similarly, employing   
	\cref{lemma:proxf1f2preliminary}\cref{lemma:proxf1f2preliminary:Lipschitzy}
	with $(\bar{x},\bar{y}) =(x^{k},y^{k})$, $(\hat{x},\hat{y}) = (x^{k-1},y^{k-1})$, 
	and $(\gamma, \delta) = (\gamma_{k},\delta_{k})$,  we have that 
	\begin{subequations}\label{eq:prop:proxproxdiff:nablay}
		\begin{align}
			&\innp{	\nabla_{y}\Phi(x^{k},y^{k}) -\nabla_{y}\Phi(x^{k-1},y^{k-1}),y-y^{k}  }\\
			\geq &  -\frac{L_{yx}}{2} \left( \gamma_{k} 
			\norm{y -y^{k}}^{2} + \frac{1}{\gamma_{k}} 
			\norm{x^{k} -x^{k-1}}^{2} \right)  
			- \frac{L_{yy}}{2}  \left( \delta_{k} \norm{y -y^{k}}^{2} 
			+\frac{1}{\delta_{k}}\norm{y^{k} 
				-y^{k-1}}^{2}  \right).
		\end{align}
	\end{subequations}
Applying \cref{eq:prop:proxproxdiff:nablax} and 
	\cref{eq:prop:proxproxdiff:nablay} in the inequality below, we have that
	\begin{align*}
		&a_{k}(x,y) \\
		=&  \frac{1}{2\tau_{k}} \norm{x-x^{k}}^{2} 
		+\frac{1}{2\sigma_{k}}\norm{y-y^{k}}^{2} \\
		&+\left( \frac{\lambda_{k}L_{xx}}{2\alpha_{k}} + 
		\frac{\theta_{k}L_{yx}}{2\gamma_{k}} \right) \norm{x^{k}-x^{k-1}}^{2} 
		+\left(\frac{\lambda_{k}L_{xy}}{2\beta_{k}}+\frac{\theta_{k}L_{yy}}{2\delta_{k}} 
		\right) 
		\norm{y^{k}-y^{k-1}}^{2}\\
		&+\lambda_{k} \innp{\nabla_{x}\Phi(x^{k},y^{k}) 
			-\nabla_{x}\Phi(x^{k-1},y^{k-1}),x-x^{k}} 
		-\theta_{k} \innp{	 
			\nabla_{y}\Phi(x^{k},y^{k}) 
			-\nabla_{y}\Phi(x^{k-1},y^{k-1}),y-y^{k}  }\\
		\geq & \frac{1}{2\tau_{k}} \norm{x-x^{k}}^{2} 
		+\frac{1}{2\sigma_{k}}\norm{y-y^{k}}^{2} \\
		&+\left( \frac{\lambda_{k}L_{xx}}{2\alpha_{k}} + 
		\frac{\theta_{k}L_{yx}}{2\gamma_{k}} \right) \norm{x^{k}-x^{k-1}}^{2} 
		+\left(\frac{\lambda_{k}L_{xy}}{2\beta_{k}}+\frac{\theta_{k}L_{yy}}{2\delta_{k}} 
		\right) 
		\norm{y^{k}-y^{k-1}}^{2}\\
		&-\frac{L_{xx}\lambda_{k}}{2} \left( \alpha_{k}  \norm{x 	-x^{k}}^{2} 
		+\frac{1}{\alpha_{k}} 
		\norm{x^{k}
			-x^{k-1}}^{2}   \right) 
		- \frac{L_{xy}\lambda_{k}}{2} \left( \beta_{k} \norm{x -x^{k}}^{2} 
		+\frac{1}{\beta_{k}} 
		\norm{y^{k} -y^{k-1}}^{2}  \right)\\
		&-\frac{L_{yx}\theta_{k}}{2} \left( \gamma_{k} 
		\norm{y -y^{k}}^{2} + \frac{1}{\gamma_{k}} 
		\norm{x^{k} -x^{k-1}}^{2} \right)  
		- \frac{L_{yy}\theta_{k}}{2}  \left( \delta_{k} \norm{y -y^{k}}^{2} 
		+\frac{1}{\delta_{k}}\norm{y^{k} 
			-y^{k-1}}^{2}  \right)\\
		=&  \frac{1}{2}  \left( \frac{1}{\tau_{k}}  - L_{xx}\lambda_{k}\alpha_{k} 
		-L_{xy}\lambda_{k}\beta_{k}   
		\right)\norm{x-x^{k}}^{2} 
		+\frac{1}{2} \left( \frac{1}{\sigma_{k}} - L_{yx}\theta_{k}\gamma_{k}
		-L_{yy}\theta_{k}\delta_{k} \right)\norm{y-y^{k}}^{2}.
	\end{align*}
\end{proof}

From now on,  let $(t_{k})_{k \in \mathbf{N}}$ be in $\mathbf{R}_{+}$ with $t_{0} \in 
\mathbf{R}_{++}$,
let $(\alpha_{k})_{k \in \mathbf{N}}$, $(\beta_{k})_{k \in \mathbf{N}}$,
$(\gamma_{k})_{k \in \mathbf{N}}$, and $(\delta_{k})_{k \in \mathbf{N}}$
be in $\mathbf{R}_{++}$,
and set 
\[
	(\forall K \in \mathbf{N} \smallsetminus \{0\}) \quad \hat{x}_{K}:= \frac{1}{ 
	\sum^{K-1}_{i=0}t_{i}} 
	\sum^{K-1}_{k=0} t_{k}x^{k+1}  
	\quad \text{and} \quad 
	\hat{y}_{K}:= \frac{1}{ \sum^{K-1}_{i=0}t_{i}} \sum^{K-1}_{k=0} t_{k}y^{k+1}.
 \]

\begin{lemma} \label{lemma:abck}
	Let $k \in \mathbf{N}$ and let $(x,y) \in \mathcal{H}_{1} \times \mathcal{H}_{2}$. We 
	have the following results.
	\begin{enumerate}
		\item \label{lemma:abck:bk} Suppose that 
		$t_{k}\left(\frac{1}{\tau_{k}}+\mu_{1}\right)
		\geq  \frac{t_{k+1}}{\tau_{k+1}}$,
		$t_{k}\left(\frac{1}{\sigma_{k}} +\mu_{2} \right) \geq \frac{t_{k+1}}{\sigma_{k+1}}$,
		and $t_{k}= t_{k+1}\lambda_{k+1}= t_{k+1}\theta_{k+1} $. Then
\[ 
			t_{k} b_{k+1}(x,y) \geq t_{k+1} a_{k+1} (x,y).
\]
	Consequently, if $(\forall k \in \mathbf{N})$ 
	$\lambda_{k}=\theta_{k} \equiv 1$,  
	$\tau_{k+1} \geq \frac{\tau_{k}}{1+\mu_{1}\tau_{k}}$, and $\sigma_{k+1} \geq 
	\frac{\sigma_{k}}{1+\mu_{2}\sigma_{k}}$. Then $(\forall (x,y) \in 
	\mathcal{H}_{1}\times \mathcal{H}_{2})$ $b_{k+1}(x,y) \geq a_{k+1} (x,y)$.
	
		\item \label{lemma:abck:ck} Suppose that there exist $\eta_{x}$ and $\eta_{y}$ in 
		$\mathbf{R}_{++}$ such that 
		
		\begin{align*}
			&\left(\frac{1}{\tau_{k}}- \lambda_{k} \alpha_{k}L_{xx} 
			-
			\lambda_{k}\beta_{k}L_{xy}- \frac{ L_{xx}}{ \alpha_{k+1}} -
			\frac{ L_{yx}}{ \gamma_{k+1}}  \right)  \geq \frac{\eta_{x}}{\tau_{k}}, \text{ and}\\
			&\left(\frac{1}{\sigma_{k}} -\theta_{k}\gamma_{k}L_{yx} 
			-\theta_{k}\delta_{k}L_{yy}
			-\frac{L_{xy}}{\beta_{k+1}}-\frac{L_{yy}}{\delta_{k+1}}  
			\right)  \geq \frac{\eta_{y}}{\sigma_{k}}.
		\end{align*}
Then  
\[
			c_{k} \geq \frac{1}{2} \left( 
			\frac{\eta_{x}}{\tau_{k}} \norm{x^{k+1}-x^{k}}^{2} +
			\frac{\eta_{y}}{\sigma_{k}} \norm{y^{k+1}-y^{k}}^{2}  \right) \geq 0.
 \]
	\end{enumerate}
\end{lemma}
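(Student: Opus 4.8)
The plan is to establish the two parts by direct, term-by-term comparison against the definitions \cref{lemma:fxk+1yxyk+1:ak}, \cref{lemma:fxk+1yxyk+1:bk}, \cref{lemma:fxk+1yxyk+1:ck}; no genuinely analytic step is involved, so the work is bookkeeping and the only care needed is keeping the index shift $k\mapsto k+1$ straight.

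\textbf{First statement.} The key observation is that $b_{k+1}(x,y)$ and the shifted expression $a_{k+1}(x,y)$ have exactly the same algebraic shape: leading squared-distance terms in $\norm{x-x^{k+1}}^{2}$ and $\norm{y-y^{k+1}}^{2}$, squared-increment terms in $\norm{x^{k+1}-x^{k}}^{2}$ and $\norm{y^{k+1}-y^{k}}^{2}$ carrying the same $L$-coefficients, and two inner-product terms in $\nabla\Phi(x^{k+1},y^{k+1})-\nabla\Phi(x^{k},y^{k})$, the difference being that the increment and inner-product terms of $a_{k+1}$ additionally carry the factors $\lambda_{k+1},\theta_{k+1}$. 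First I would multiply $b_{k+1}$ by $t_{k}$ and $a_{k+1}$ by $t_{k+1}$, and use the identities $t_{k}=t_{k+1}\lambda_{k+1}=t_{k+1}\theta_{k+1}$ to check that the four increment terms and the two inner-product terms on the two sides coincide \emph{identically} (each $t_{k}$-times-coefficient equals the corresponding $t_{k+1}\lambda_{k+1}$- or $t_{k+1}\theta_{k+1}$-times-coefficient). Then only the leading terms remain, and the hypotheses $t_{k}(\tfrac{1}{\tau_{k}}+\mu_{1})\geq \tfrac{t_{k+1}}{\tau_{k+1}}$ and $t_{k}(\tfrac{1}{\sigma_{k}}+\mu_{2})\geq \tfrac{t_{k+1}}{\sigma_{k+1}}$ turn $\tfrac{t_{k}}{2}(\tfrac{1}{\tau_{k}}+\mu_{1})\norm{x-x^{k+1}}^{2}\geq \tfrac{t_{k+1}}{2\tau_{k+1}}\norm{x-x^{k+1}}^{2}$ and its $y$-analogue into the desired componentwise inequalities; summing gives $t_{k}b_{k+1}(x,y)\geq t_{k+1}a_{k+1}(x,y)$. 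For the ``consequently'' clause, taking $\lambda_{k}=\theta_{k}\equiv 1$ forces the constraint $t_{k}=t_{k+1}\lambda_{k+1}=t_{k+1}\theta_{k+1}$ to read $t_{k}=t_{k+1}$, so I may apply the first part with a constant sequence $(t_{k})\equiv c>0$; the two stepsize hypotheses then collapse to $\tfrac{1}{\tau_{k}}+\mu_{1}\geq\tfrac{1}{\tau_{k+1}}$ and $\tfrac{1}{\sigma_{k}}+\mu_{2}\geq\tfrac{1}{\sigma_{k+1}}$, which rearrange to $\tau_{k+1}\geq\tfrac{\tau_{k}}{1+\mu_{1}\tau_{k}}$ and $\sigma_{k+1}\geq\tfrac{\sigma_{k}}{1+\mu_{2}\sigma_{k}}$, and dividing the conclusion by $c$ yields $b_{k+1}(x,y)\geq a_{k+1}(x,y)$.

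\textbf{Second statement.} This is a one-line estimate on \cref{lemma:fxk+1yxyk+1:ck}. Write $c_{k}=\tfrac{1}{2}P_{k}\norm{x^{k+1}-x^{k}}^{2}+\tfrac{1}{2}Q_{k}\norm{y^{k+1}-y^{k}}^{2}$, where $P_{k}$ and $Q_{k}$ denote the two bracketed coefficients. The two assumed inequalities are precisely $P_{k}\geq \tfrac{\eta_{x}}{\tau_{k}}$ and $Q_{k}\geq \tfrac{\eta_{y}}{\sigma_{k}}$, so nonnegativity of $\norm{x^{k+1}-x^{k}}^{2}$ and $\norm{y^{k+1}-y^{k}}^{2}$ gives $c_{k}\geq \tfrac{1}{2}\bigl(\tfrac{\eta_{x}}{\tau_{k}}\norm{x^{k+1}-x^{k}}^{2}+\tfrac{\eta_{y}}{\sigma_{k}}\norm{y^{k+1}-y^{k}}^{2}\bigr)$, and the final bound $\geq 0$ follows because $\eta_{x},\eta_{y},\tau_{k},\sigma_{k}\in\mathbf{R}_{++}$.

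I do not expect a real obstacle here; the only thing to watch is that the three scaling identities in the first part are \emph{exactly} what is needed to make the increment terms and inner-product terms cancel between $t_{k}b_{k+1}$ and $t_{k+1}a_{k+1}$, so that the two stepsize hypotheses carry the whole burden of the remaining (leading) terms.
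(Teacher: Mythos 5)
Your proposal is correct and follows essentially the same route as the paper: a term-by-term comparison of $t_{k}b_{k+1}$ against $t_{k+1}a_{k+1}$ in which the identities $t_{k}=t_{k+1}\lambda_{k+1}=t_{k+1}\theta_{k+1}$ make the increment and inner-product terms coincide and the two stepsize hypotheses handle the leading squared-distance terms, while part (ii) is read off directly from the definition of $c_{k}$. The treatment of the \enquote{consequently} clause (constant $t_{k}$ and the rearranged stepsize conditions) also matches what the paper intends.
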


\begin{proof}

	\cref{lemma:abck:bk}: According to \cref{lemma:fxk+1yxyk+1:ak},
	\cref{lemma:fxk+1yxyk+1:bk}, and our assumptions, 
	\begin{align*}
		&t_{k} b_{k+1}(x,y) \\
		\stackrel{\cref{lemma:fxk+1yxyk+1:bk}}{=}&  \frac{t_{k} }{2} 
		\left(\frac{1}{\tau_{k}}+\mu_{1}\right)
		\norm{x-x^{k+1}}^{2}  
		+\frac{t_{k} }{2} \left(\frac{1}{\sigma_{k}} +\mu_{2} \right)\norm{y-y^{k+1}}^{2} 
		\\
		&+t_{k} \left( \frac{ L_{xx}}{2\alpha_{k+1}} + 
		\frac{ L_{yx}}{2\gamma_{k+1}} \right) \norm{x^{k+1}-x^{k}}^{2} 
		+t_{k} \left(\frac{L_{xy}}{2\beta_{k+1}}+\frac{L_{yy}}{2\delta_{k+1}}  \right) 
		\norm{y^{k+1}-y^{k}}^{2}\\
		& +\scalemath{0.9}{t_{k} \innp{\nabla_{x}\Phi 	(x^{k+1},y^{k+1})-\nabla_{x} 
		\Phi(x^{k},y^{k}), 
		x-x^{k+1}} 
		-t_{k} \innp{\nabla_{y}\Phi(x^{k+1},y^{k+1})-\nabla_{y} \Phi(x^{k},y^{k}), y-y^{k+1} 
		} } \\
		\geq&
		\frac{t_{k+1}}{2\tau_{k+1}} \norm{x-x^{k+1}}^{2} 
		+\frac{t_{k+1}}{2\sigma_{k+1}}\norm{y-y^{k+1}} \\
		&+t_{k+1}\left( \frac{\lambda_{k+1}L_{xx}}{2\alpha_{k+1}} + 
		\frac{\theta_{k+1}L_{yx}}{2\gamma_{k+1}} \right) \norm{x^{k+1}-x^{k}}^{2} 
		+t_{k+1}
		\left(
		\frac{\lambda_{k+1}L_{xy}}{2\beta_{k+1}}+\frac{\theta_{k+1}L_{yy}}{2\delta_{k+1}} 
		\right) 
		\norm{y^{k+1}-y^{k}}^{2}\\
		&+t_{k+1}\lambda_{k+1} \innp{\nabla_{x}\Phi(x^{k+1},y^{k+1}) 
			-\nabla_{x}\Phi(x^{k},y^{k}),x-x^{k+1}} \\
		&  -t_{k+1}\theta_{k+1} \innp{	 
			\nabla_{y}\Phi(x^{k+1},y^{k+1}) 
			-\nabla_{y}\Phi(x^{k},y^{k}),y-y^{k+1}  }\\
		\stackrel{\cref{lemma:fxk+1yxyk+1:ak}}{=}&t_{k+1}a_{k+1}(x,y).
	\end{align*}
	
	\cref{lemma:abck:ck}: In view of our assumptions and \cref{lemma:fxk+1yxyk+1:ck}, 
	\begin{align*}
		c_{k} = &\frac{1}{2}\left(\frac{1}{\tau_{k}}- \lambda_{k} \alpha_{k}L_{xx} -
		\lambda_{k}\beta_{k}L_{xy}- \frac{ L_{xx}}{ \alpha_{k+1}} -
		\frac{ L_{yx}}{ \gamma_{k+1}}  \right) \norm{x^{k+1}-x^{k}}^{2} \\
		&+\frac{1}{2 } \left(\frac{1}{\sigma_{k}} -\theta_{k}\gamma_{k}L_{yx} 
		-\theta_{k}\delta_{k}L_{yy}
		-\frac{L_{xy}}{\beta_{k+1}}-\frac{L_{yy}}{\delta_{k+1}}  
		\right) 
		\norm{y^{k+1}-y^{k}}^{2}\\
		\geq &\frac{1}{2} \left( \frac{\eta_{x}}{\tau_{k}} \norm{x^{k+1}-x^{k}}^{2} +
		\frac{\eta_{y}}{\sigma_{k}} \norm{y^{k+1}-y^{k}}^{2}  \right) \geq 0.
	\end{align*}
\end{proof}

\begin{proposition}\label{prop:abckparameters}
\begin{enumerate}
	\item \label{prop:abckparameters:ab} Suppose that $(\forall k \in \mathbf{N})$ 
	$\lambda_{k} =\theta_{k}$ and $t_{k}=\frac{t_{0}}{\theta_{1}\cdots \theta_{k}}$ with
	$\lambda_{0} =\theta_{0}=1$ and $t_{0} \in \mathbf{R}_{++}$.
	Suppose that $(\forall k \in \mathbf{N})$ $\tau_{k+1} \geq \frac{\tau_{k}}{\theta_{k+1} 
		(1+\mu_{1}\tau_{k})}$ and $\sigma_{k+1} \geq \frac{\sigma_{k}}{\theta_{k+1} 
		(1+\mu_{2}\sigma_{k})}$. Then $(\forall k \in \mathbf{N})$ $t_{k} b_{k+1}(x,y) \geq 
		t_{k+1} a_{k+1} (x,y)$.
	\item \label{prop:abckparameters:c} 
Suppose  $\eta_{x}:=1-\sup_{k \in \mathbf{N}}\tau_{k}\left(   
L_{xx}\sup_{k \in \mathbf{N}}\lambda_{k}\alpha_{k} 
+L_{xy}\sup_{k \in \mathbf{N}}\lambda_{k}\beta_{k} +\frac{L_{xx}}{\inf_{k \in 
		\mathbf{N}}\alpha_{k}} +\frac{L_{yx}}{\inf_{k \in \mathbf{N}}\gamma_{k}}  
\right)$ $ \in \mathbf{R}_{++}$
and 
$\eta_{y}:= 1- \sup_{k \in \mathbf{N}}\sigma_{k} \left(  L_{yx}\sup_{k \in \mathbf{N}} 
\theta_{k} \gamma_{k} + 
L_{yy}\sup_{k \in \mathbf{N}}\theta_{k}\delta_{k} + \frac{L_{xx}}{\inf_{k \in 
		\mathbf{N}}\beta_{k}} +\frac{L_{yy}}{\inf_{k \in \mathbf{N}}\delta_{k}}  
\right) \in \mathbf{R}_{++}$.
Then $(\forall k \in \mathbf{N})$  $c_{k} \geq \frac{1}{2} \left( 
	\frac{\eta_{x}}{\tau_{k}} \norm{x^{k+1}-x^{k}}^{2} +
	\frac{\eta_{y}}{\sigma_{k}} \norm{y^{k+1}-y^{k}}^{2}  \right) \geq 0$.
\end{enumerate}
\end{proposition}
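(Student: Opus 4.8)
The plan is to obtain both parts directly from \cref{lemma:abck}, by checking that the concrete parameter choices made here imply the abstract hypotheses of that lemma; nothing beyond elementary manipulation of the parameter sequences is needed.

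For \cref{prop:abckparameters:ab}: first I would record the effect of the choice $t_{k}=t_{0}/(\theta_{1}\cdots\theta_{k})$ (with the convention that the empty product is $1$, so the formula is consistent at $k=0$), namely $t_{k+1}=t_{k}/\theta_{k+1}$, i.e. $t_{k}=t_{k+1}\theta_{k+1}$, and since $\lambda_{k+1}=\theta_{k+1}$ also $t_{k}=t_{k+1}\lambda_{k+1}$; this is exactly the third hypothesis of \cref{lemma:abck}\cref{lemma:abck:bk}. Next, substituting $t_{k+1}=t_{k}/\theta_{k+1}$ into the inequality $t_{k}(\tfrac{1}{\tau_{k}}+\mu_{1})\ge t_{k+1}/\tau_{k+1}$ and cancelling the positive factor $t_{k}$ turns it into $\tfrac{1}{\tau_{k}}+\mu_{1}\ge \tfrac{1}{\theta_{k+1}\tau_{k+1}}$, i.e. $\tau_{k+1}\ge \tau_{k}/(\theta_{k+1}(1+\mu_{1}\tau_{k}))$ --- which is exactly the assumed lower bound on $\tau_{k+1}$; the identical computation with $(\sigma_{k},\sigma_{k+1},\mu_{2})$ in place of $(\tau_{k},\tau_{k+1},\mu_{1})$ verifies the remaining hypothesis. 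With all three hypotheses in hand, \cref{lemma:abck}\cref{lemma:abck:bk} yields $t_{k}b_{k+1}(x,y)\ge t_{k+1}a_{k+1}(x,y)$ for every $k\in\mathbf{N}$.

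For \cref{prop:abckparameters:c}: I would verify the two displayed hypotheses of \cref{lemma:abck}\cref{lemma:abck:ck} with the stated constants $\eta_{x},\eta_{y}$. Fixing $k$ and factoring $1/\tau_{k}$ out of the $x$-expression gives
\[
\frac{1}{\tau_{k}}-\lambda_{k}\alpha_{k}L_{xx}-\lambda_{k}\beta_{k}L_{xy}-\frac{L_{xx}}{\alpha_{k+1}}-\frac{L_{yx}}{\gamma_{k+1}}
=\frac{1}{\tau_{k}}\left(1-\tau_{k}\Bigl(L_{xx}\lambda_{k}\alpha_{k}+L_{xy}\lambda_{k}\beta_{k}+\frac{L_{xx}}{\alpha_{k+1}}+\frac{L_{yx}}{\gamma_{k+1}}\Bigr)\right),
\]
and then I would bound each term in the inner parentheses by its supremum/infimum over $\mathbf{N}$ (so $\lambda_{k}\alpha_{k}\le\sup_{k}\lambda_{k}\alpha_{k}$, $1/\alpha_{k+1}\le 1/\inf_{k}\alpha_{k}$, and so on) and finally $\tau_{k}\le\sup_{k}\tau_{k}$; since all quantities involved are nonnegative, the bracketed factor is at least $\eta_{x}$, and multiplying by $1/\tau_{k}>0$ gives the first required inequality. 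The hypothesis $\eta_{x}\in\mathbf{R}_{++}$ is precisely what makes the relevant suprema finite and infima positive, so $\eta_{x}$ is a genuine positive real. The $y$-inequality is proved the same way, with $(\sigma_{k},\theta_{k},\gamma_{k},\delta_{k})$ and the Lipschitz constants appearing in the $\sigma$-line of \cref{lemma:abck}\cref{lemma:abck:ck} playing the roles of the $x$-data. Applying \cref{lemma:abck}\cref{lemma:abck:ck} then yields $c_{k}\ge\tfrac12\bigl(\tfrac{\eta_{x}}{\tau_{k}}\norm{x^{k+1}-x^{k}}^{2}+\tfrac{\eta_{y}}{\sigma_{k}}\norm{y^{k+1}-y^{k}}^{2}\bigr)\ge 0$.

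I do not expect a genuine obstacle here: the whole argument is hypothesis-chasing and bookkeeping. The only points needing a little care are keeping track of the direction of each inequality when cancelling or multiplying through by the positive quantities $t_{k}$, $\theta_{k+1}$, $\tau_{k}$, $\sigma_{k}$, and, in part \cref{prop:abckparameters:c}, making sure that the Lipschitz constants one reads off in the $y$-line of $\eta_{y}$ match those occurring in the $\sigma$-condition of \cref{lemma:abck}\cref{lemma:abck:ck}.
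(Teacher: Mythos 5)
Your proposal is correct and follows essentially the same route as the paper: both parts are proved by verifying the hypotheses of \cref{lemma:abck} from the concrete parameter choices (the telescoping identity $t_{k}=t_{k+1}\theta_{k+1}=t_{k+1}\lambda_{k+1}$ and the rearranged step-size inequalities for part (i); bounding each term by its supremum or infimum and factoring out $1/\tau_{k}$, $1/\sigma_{k}$ for part (ii)). Your closing caveat about matching Lipschitz constants is well taken: the stated $\eta_{y}$ contains $L_{xx}/\inf_{k\in\mathbf{N}}\beta_{k}$ whereas $c_{k}$ contains $L_{xy}/\beta_{k+1}$, an apparent typo ($L_{xx}$ should read $L_{xy}$) that the paper's own proof silently glosses over.
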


\begin{proof}
	\cref{prop:abckparameters:ab}: Clearly, our assumptions  $(\forall k \in \mathbf{N})$ 
	$\lambda_{k} =\theta_{k}$ and $t_{k}=\frac{t_{0}}{\theta_{1}\cdots \theta_{k}}$ with
	$\lambda_{0} =\theta_{0}=1$ and $t_{0} \in \mathbf{R}_{++}$ imply that $t_{k}= 
	t_{k+1}\lambda_{k+1}= t_{k+1}\theta_{k+1} $.
	Moreover, it is easy to see that for every $k \in \mathbf{N}$,
	\begin{align*}
		&\tau_{k+1} \geq \frac{\tau_{k}}{\theta_{k+1} (1+\mu_{1}\tau_{k})}  \Leftrightarrow 
		\tau_{k+1} \geq \frac{\tau_{k}t_{k+1}}{t_{k} (1+\mu_{1}\tau_{k})} \Leftrightarrow 
		t_{k}\left(\frac{1}{\tau_{k}}+\mu_{1}\right)
		\geq  \frac{t_{k+1}}{\tau_{k+1}};\\
		& \sigma_{k+1} \geq \frac{\sigma_{k}}{\theta_{k+1} (1+\mu_{2}\sigma_{k})}
		\Leftrightarrow 
		\sigma_{k+1} \geq \frac{\sigma_{k}t_{k+1}}{t_{k} (1+\mu_{2}\sigma_{k})}
		\Leftrightarrow t_{k}\left(\frac{1}{\sigma_{k}} +\mu_{2} \right) \geq 
		\frac{t_{k+1}}{\sigma_{k+1}}.
	\end{align*}
Hence, the required result follows immediately from 
\cref{lemma:abck}\cref{lemma:abck:bk}.

	\cref{prop:abckparameters:c}: According to our assumptions, we have that for every 
	$ k \in \mathbf{N}$,
		\begin{align*}
			1-\eta_{x} &=\sup_{k \in \mathbf{N}}\tau_{k}\left(   
			L_{xx}\sup_{k \in \mathbf{N}}\lambda_{k}\alpha_{k} 
			+L_{xy}\sup_{k \in \mathbf{N}}\lambda_{k}\beta_{k} +\frac{L_{xx}}{\inf_{k \in 
					\mathbf{N}}\alpha_{k}} +\frac{L_{yx}}{\inf_{k \in \mathbf{N}}\gamma_{k}}  
			\right) \\
		  & \geq \tau_{k} \left( 
		  \lambda_{k} \alpha_{k}L_{xx} 
		+  \lambda_{k}\beta_{k}L_{xy}+  \frac{ L_{xx}}{ \alpha_{k+1}} +
		  \frac{ L_{yx}}{ \gamma_{k+1}}  \right),
		\end{align*}
	and 
	\begin{align*}
1-	\eta_{y} &=  \sup_{k \in \mathbf{N}}\sigma_{k} \left(  L_{yx}\sup_{k \in \mathbf{N}} 
	\theta_{k} \gamma_{k} + 
	L_{yy}\sup_{k \in \mathbf{N}}\theta_{k}\delta_{k} + \frac{L_{xx}}{\inf_{k \in 
			\mathbf{N}}\beta_{k}} +\frac{L_{yy}}{\inf_{k \in \mathbf{N}}\delta_{k}}  
	\right) \\
	&\geq \sigma_{k} \left( \theta_{k}\gamma_{k}L_{yx} 
	+\theta_{k}\delta_{k}L_{yy}
	+\frac{L_{xy}}{\beta_{k+1}}+\frac{L_{yy}}{\delta_{k+1}}  
	\right),
	\end{align*}
which imply,  respectively, that for every 
$ k \in \mathbf{N}$,
\[ 
	\left(\frac{1}{\tau_{k}}- 
	\lambda_{k} \alpha_{k}L_{xx} 
	-
	\lambda_{k}\beta_{k}L_{xy}- \frac{ L_{xx}}{ \alpha_{k+1}} -
	\frac{ L_{yx}}{ \gamma_{k+1}}  \right) \geq \frac{\eta_{x}}{\tau_{k}},
\]
and 
\[ 
	\left(\frac{1}{\sigma_{k}} -\theta_{k}\gamma_{k}L_{yx} 
	-\theta_{k}\delta_{k}L_{yy}
	-\frac{L_{xy}}{\beta_{k+1}}-\frac{L_{yy}}{\delta_{k+1}}  
	\right)  \geq \frac{\eta_{y}}{\sigma_{k}}.
\]
	Altogether, we see that the desired results are from 
	\cref{lemma:abck}\cref{lemma:abck:ck}.
\end{proof}

\begin{remark}  \label{remark:Parameters}
	Because assumptions in \cref{lemma:abck} are critical in our convergence results 
	later,	we further discuss them below. 
	\begin{enumerate}
		\item  \label{remark:Parameters:Axy}
		Let $A: \mathcal{H}_{1} \to \mathcal{H}_{2}$ be bounded and linear. 
		Consider $(\forall (x,y)\in \mathcal{H}_{1}\times \mathcal{H}_{2})$ $\Phi(x,y) 
		=\innp{Ax, y}$. (In this case, the function $f$ defined in \cref{eq:def:f} is or covers 
		convex-concave functions studied  in a number of papers on solving 
		convex-concave saddle-point problems such as \cite{BonettiniRuggiero2012}, 
	\cite{ChambollePock2011}, \cite{HeYouYuan2014}, \cite{HamedaniAybat2021}, 
	\cite{ZhuChan2008efficient}, and so on.)
		Because, via \cite[Fact~2.25(ii)]{BauschkCombettes2017}, 
		$\norm{A} 
		=\norm{A^{*}}$, we have $L_{xx}=L_{yy} =0$ and 
		$L_{xy}=L_{yx} =\norm{A}$ in our assumption \cref{eq:nablaPhixy}. If $(\forall k \in 
		\mathbf{N})$ $\lambda_{k}=\theta_{k}=1$, $\tau_{k}=\tau \in \mathbf{R}_{++}$, 
		$\sigma_{k} =\sigma  \in \mathbf{R}_{++}$,  $\beta_{k}=\beta  \in \mathbf{R}_{++}$, 
		and  $\gamma_{k} 
		=\gamma  \in \mathbf{R}_{++}$, then
		the assumptions in \cref{prop:abckparameters}\cref{prop:abckparameters:ab} are 
		satisfied automatically, and 	the assumption in
		\cref{prop:abckparameters}\cref{prop:abckparameters:c}  becomes
\[ 
			\eta_{x} := 1- \tau \left(\beta +\frac{1}{\gamma}\right)\norm{A}  >0  \quad 
			\text{and} 
			\quad 
			\eta_{y} :=1- 	\sigma \left( \gamma +\frac{1}{\beta}\right)\norm{A} >0.
\]
	
		\item \label{remark:Parameters:bounded}
		One easy example satisfying assumptions of 
		\cref{prop:abckparameters}\cref{prop:abckparameters:ab} is: 
		$(\forall k \in \mathbf{N})$ $\lambda_{k} =\theta_{k} \equiv 1$, $t_{k} \equiv t_{0} 
		\in \mathbf{R}_{++}$, $\tau_{k}  \equiv  \tau_{0} \in \mathbf{R}_{++}$, and
		$\sigma_{k}  \equiv  \sigma_{0} \in \mathbf{R}_{++}$.
		
		In addition,  if $\sup_{k \in \mathbf{N}} \lambda_{k} <\infty$, 
		$\sup_{k \in \mathbf{N}} \theta_{k} <\infty$,
		$0 < \inf_{k \in \mathbf{N}} \alpha_{k} \leq \sup_{k \in \mathbf{N}} \alpha_{k} 
		<\infty$,  
		$0 < \inf_{k \in \mathbf{N}} \beta_{k} \leq \sup_{k \in \mathbf{N}} \beta_{k} <\infty$, 
		$0 < \inf_{k \in \mathbf{N}} \gamma_{k} \leq \sup_{k \in \mathbf{N}} \gamma_{k}
		<\infty$,
		and  $0 < \inf_{k \in \mathbf{N}} \delta_{k} \leq \sup_{k \in \mathbf{N}} \delta_{k} 
		<\infty$,
		then we can always find $(\tau_{k})_{k \in \mathbf{N}}$ and $(\sigma_{k})_{k \in 
			\mathbf{N}}$ small enough to satisfy assumptions stated in 
		\cref{prop:abckparameters}\cref{prop:abckparameters:c}.    
		
		Note that the sequences $(\tau_{k})_{k \in \mathbf{R}}$ and $(\sigma_{k})_{k \in 
		\mathbf{R}}$
		are the parameters used in our iteration scheme \cref{eq:algorithmproxif1f2}. If it is 
		necessary, we can always set them as small as possible. 
	\end{enumerate}
	
\end{remark}

\begin{lemma} \label{lemma:fergodic}
	Let $(x,y)$ be in $\mathcal{H}_{1} \times \mathcal{H}_{2}$,
	let $K$ be in $ \mathbf{N} \smallsetminus \{0\}$, 
	and let $(\forall k \in \mathbf{N})$
	$a_{k}(x,y)$, $b_{k+1}(x,y)$, and $c_{k}$ be defined in 
	\cref{lemma:fxk+1yxyk+1:ak}, 
	\cref{lemma:fxk+1yxyk+1:bk}, and \cref{lemma:fxk+1yxyk+1:ck}, respectively. 
	We have the following assertions. 
	\begin{enumerate}
		\item \label{lemma:fergodic:ineq} Let $(x,y) \in \dom f_{1} \times \dom 
		f_{2}$. Then
		\begin{align*}
			f(\hat{x}_{K},y) -f(x,\hat{y}_{K}) \leq&
			\frac{1}{\sum^{K-1}_{i=0}t_{i}}  
			\sum^{K-1}_{k=0}t_{k} \left( f(x^{k+1},y) -f(x,y^{k+1})\right)\\
			\leq &\frac{1}{\sum^{K-1}_{i=0}t_{i}}  
			\sum^{K-1}_{k=0}t_{k} \left( a_{k}(x,y) -b_{k+1}(x,y) -c_{k}\right).
		\end{align*}
		\item  \label{lemma:fergodic:abk} 
		Suppose that $(\forall k \in \mathbf{N})$ 
		$\lambda_{k} =\theta_{k}$ and $t_{k}=\frac{t_{0}}{\theta_{1}\cdots \theta_{k}}$ with
		$\lambda_{0} =\theta_{0}=1$ and $t_{0} \in \mathbf{R}_{++}$.
		Suppose that $(\forall k \in \mathbf{N})$ $\tau_{k+1} \geq 
		\frac{\tau_{k}}{\theta_{k+1} 
			(1+\mu_{1}\tau_{k})}$ and $\sigma_{k+1} \geq \frac{\sigma_{k}}{\theta_{k+1} 
			(1+\mu_{2}\sigma_{k})}$. 
		Suppose that  $\eta_{x}:=1-\sup_{k \in \mathbf{N}}\tau_{k}\left(   
		L_{xx}\sup_{k \in \mathbf{N}}\lambda_{k}\alpha_{k} 
		+L_{xy}\sup_{k \in \mathbf{N}}\lambda_{k}\beta_{k} +\frac{L_{xx}}{\inf_{k \in 
				\mathbf{N}}\alpha_{k}} +\frac{L_{yx}}{\inf_{k \in \mathbf{N}}\gamma_{k}}  
		\right) \in \mathbf{R}_{++}$
		and 
		$\eta_{y}:= 1- \sup_{k \in \mathbf{N}}\sigma_{k} \left(  L_{yx}\sup_{k \in \mathbf{N}} 
		\theta_{k} \gamma_{k} + 
		L_{yy}\sup_{k \in \mathbf{N}}\theta_{k}\delta_{k} + \frac{L_{xx}}{\inf_{k \in 
				\mathbf{N}}\beta_{k}} +\frac{L_{yy}}{\inf_{k \in \mathbf{N}}\delta_{k}}  
		\right) \in \mathbf{R}_{++}$.
		Then we have the following statements. 
		\begin{enumerate}
			\item  \label{lemma:fergodic:abk:f}  For every $K \in \mathbf{N} \smallsetminus 
			\{0\}$,
			\begin{align*}
				&\frac{1}{\sum^{K-1}_{i=0}t_{i}}  
				\sum^{K-1}_{k=0}t_{k} \left( a_{k}(x,y) -b_{k+1}(x,y) -c_{k}\right)\\
				\leq & \frac{t_{0}}{2\sum^{K-1}_{i=0}t_{i}}   
				\left( \frac{1}{\tau_{0}} 
				\norm{x-x^{0}}^{2} 
				+\frac{1}{\sigma_{0}}\norm{y-y^{0}}^{2} \right)\\
				&-  
				\frac{t_{K}}{2\sum^{K-1}_{i=0}t_{i}}
				\left(\frac{1}{\tau_{K}} -\lambda_{K} \left( L_{xx}\alpha_{K} +
				L_{xy}\beta_{K}\right) 
				\right)
				\norm{x -x^{K}}^{2}\\
				&-  \frac{t_{K}}{2\sum^{K-1}_{i=0}t_{i}} 
				\left(  \frac{1}{\sigma_{K}} -\theta_{K} 
				\left(L_{yx}\gamma_{K}+L_{yy}\delta_{K}  
				\right)\right)
				\norm{y -y^{K}}^{2}.
			\end{align*}

			\item \label{lemma:fergodic:abk:0leqf}  Let $(x^{*},y^{*})$ be a saddle-point of 
			$f$. Then for every $K \in \mathbf{N} \smallsetminus \{0\}$, we have that 
			\begin{align*}
				0 \leq &f(\hat{x}_{K},y^{*}) -f(x^{*},\hat{y}_{K})\\
				\leq  &\frac{1}{\sum^{K-1}_{i=0}t_{i}}  
				\sum^{K-1}_{k=0}t_{k} \left( a_{k}(x^{*},y^{*}) -b_{k+1}(x^{*},y^{*}) 
				-c_{k}\right)\\
				\leq &\frac{t_{0}}{2\sum^{K-1}_{i=0}t_{i}}   
				\left( \frac{1}{\tau_{0}} 
				\norm{x^{*}-x^{0}}^{2} 
				+\frac{1}{\sigma_{0}}\norm{y^{*}-y^{0}}^{2} \right)\\
				&-  \frac{t_{K}}{2\sum^{K-1}_{i=0}t_{i}}
				\left(\frac{1}{\tau_{K}} -\lambda_{K} \left( L_{xx}\alpha_{K} +
				L_{xy}\beta_{K}\right) 
				\right)
				\norm{x^{*} -x^{K}}^{2}\\
				&-  \frac{t_{K}}{2\sum^{K-1}_{i=0}t_{i}} 
				\left(  \frac{1}{\sigma_{K}} -\theta_{K} 
				\left(L_{yx}\gamma_{K}+L_{yy}\delta_{K}  
				\right)\right)
				\norm{y^{*} -y^{K}}^{2}.
			\end{align*}
		\end{enumerate}
		
	\end{enumerate}
\end{lemma}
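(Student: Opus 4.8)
The plan is to prove assertion~(i) by combining Jensen's inequality with the one-step estimate of \cref{prop:fxk+1yxyk+1}, then to deduce the estimate in~(ii)(a) by telescoping the resulting weighted sum with the help of \cref{prop:abckparameters}, and finally to obtain~(ii)(b) directly from the saddle-point property.

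\emph{Assertion~(i).} Since $(t_k)_{k\in\mathbf{N}}$ lies in $\mathbf{R}_+$ with $t_0\in\mathbf{R}_{++}$, we have $\sum_{i=0}^{K-1}t_i>0$ and the numbers $t_k/\sum_{i=0}^{K-1}t_i$ are the weights of a convex combination; since the iterates $x^1,\dots,x^K$ lie in $\dom f_1$ and $y^1,\dots,y^K$ in $\dom f_2$, both of which are convex, it follows that $\hat{x}_K\in\dom f_1$ and $\hat{y}_K\in\dom f_2$. Because $f(\cdot,y)=f_1(\cdot)+\Phi(\cdot,y)-f_2(y)$ is convex for each $y\in\dom f_2$ and $f(x,\cdot)=f_1(x)+\Phi(x,\cdot)-f_2(\cdot)$ is concave for each $x\in\dom f_1$, Jensen's inequality gives
\begin{align*}
f(\hat{x}_K,y)\leq\frac{1}{\sum_{i=0}^{K-1}t_i}\sum_{k=0}^{K-1}t_k f(x^{k+1},y)\quad\text{and}\quad f(x,\hat{y}_K)\geq\frac{1}{\sum_{i=0}^{K-1}t_i}\sum_{k=0}^{K-1}t_k f(x,y^{k+1}),
\end{align*}
and subtracting these yields the first inequality of~(i). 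The second inequality follows by applying \cref{prop:fxk+1yxyk+1} (with the fixed sequences $(\alpha_k)$, $(\beta_k)$, $(\gamma_k)$, $(\delta_k)$) at each index $k$, multiplying by $t_k\geq0$, and summing over $k\in\{0,\dots,K-1\}$.

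\emph{Assertion~(ii)(a).} Here I would use the two facts supplied by \cref{prop:abckparameters}: its first part gives $t_k b_{k+1}(x,y)\geq t_{k+1}a_{k+1}(x,y)$ for every $k$, and its second part gives $c_k\geq0$ for every $k$. Using also $t_k\geq0$, these yield the telescoping estimate
\begin{align*}
\sum_{k=0}^{K-1}t_k\bigl(a_k(x,y)-b_{k+1}(x,y)-c_k\bigr)\leq\sum_{k=0}^{K-1}t_k a_k(x,y)-\sum_{k=0}^{K-1}t_{k+1}a_{k+1}(x,y)=t_0 a_0(x,y)-t_K a_K(x,y).
\end{align*}
It remains to control the two endpoints. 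Since $x^{-1}=x^0$, $y^{-1}=y^0$, and $\lambda_0=\theta_0=1$, every difference term in the definition \cref{lemma:fxk+1yxyk+1:ak} of $a_0$ vanishes, so $t_0 a_0(x,y)=\tfrac{t_0}{2}\bigl(\tfrac{1}{\tau_0}\norm{x-x^0}^2+\tfrac{1}{\sigma_0}\norm{y-y^0}^2\bigr)$; and \cref{lemma:akgeq} bounds $a_K(x,y)$ from below, so $-t_K a_K(x,y)$ is at most the two negative quadratic terms in $\norm{x-x^K}^2$ and $\norm{y-y^K}^2$ displayed in~(ii)(a). Dividing through by $\sum_{i=0}^{K-1}t_i$ finishes this step.

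\emph{Assertion~(ii)(b) and the main obstacle.} Apply~(i) and~(ii)(a) with $(x,y)=(x^{*},y^{*})$, which is admissible because $(x^{*},y^{*})\in\dom f_1\times\dom f_2$; this gives the second and third inequalities of~(ii)(b), while the leftmost inequality follows from the saddle-point property of $(x^{*},y^{*})$, namely $f(x^{*},\hat{y}_K)\leq f(x^{*},y^{*})\leq f(\hat{x}_K,y^{*})$, so that $f(\hat{x}_K,y^{*})-f(x^{*},\hat{y}_K)\geq0$. The whole argument is essentially bookkeeping; the step demanding the most care is the telescoping in~(ii)(a), where one must confirm that the initialization terms of $a_0$ do cancel (thanks to $x^{-1}=x^0$, $y^{-1}=y^0$, $\lambda_0=\theta_0=1$) and that the lower bound for $a_K$ coming from \cref{lemma:akgeq} matches, coefficient by coefficient, the quadratic terms in the asserted bound. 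Checking that the hypotheses imposed in~(ii) are precisely those required to invoke \cref{prop:abckparameters} is routine.
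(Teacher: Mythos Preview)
Your proposal is correct and follows essentially the same route as the paper: Jensen for~(i), telescoping via $t_k b_{k+1}\geq t_{k+1}a_{k+1}$ and $c_k\geq 0$ for~(ii)(a), and the saddle-point inequality for~(ii)(b). The only cosmetic difference is that for the endpoint term $-t_K a_K(x,y)$ you invoke \cref{lemma:akgeq} directly, whereas the paper re-expands $a_K$ from its definition and re-applies the Lipschitz estimates of \cref{lemma:proxf1f2preliminary} inline; your citation is the cleaner choice since \cref{lemma:akgeq} packages exactly that computation.
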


\begin{proof}
	\cref{lemma:fergodic:ineq}: 	Because $f(\cdot, y)-f(x,\cdot)$ is convex, we know 
	that
	\begin{align*}
		f(\hat{x}_{K},y) -f(x,\hat{y}_{K}) &\leq  
		\frac{1}{\sum^{K-1}_{i=0}t_{i}}  
		\sum^{K-1}_{k=0}t_{k} \left( f(x^{k+1},y) -f(x,y^{k+1})\right)\\
		&\leq  \frac{1}{\sum^{K-1}_{i=0}t_{i}}  
		\sum^{K-1}_{k=0}t_{k} \left( a_{k}(x,y) -b_{k+1}(x,y) -c_{k}\right),
	\end{align*}
	where in the second inequality, we used \cref{prop:fxk+1yxyk+1}.

	\cref{lemma:fergodic:abk:f}: 
Let $K $ be in $ \mathbf{N} \smallsetminus \{0\}$.
	Adopting 	\cref{lemma:abck} in the first	inequality,
 employing   $x^{-1}=x^{0}$ and $y^{-1}=y^{0}$ in the second equality, 
 and using	
 \cref{lemma:proxf1f2preliminary}\cref{lemma:proxf1f2preliminary:Lipschitzx}$\&$\cref{lemma:proxf1f2preliminary:Lipschitzy}
	with   $(\bar{x},\bar{y}) =(x^{K},y^{K})$, $(\hat{x},\hat{y}) = (x^{K-1},y^{K-1})$, 
	$(\alpha,\beta) = (\alpha_{K},\beta_{K})$,  and $(\gamma, \delta) = 
	(\gamma_{K},\delta_{K})$ in the second inequality, we have that
	\begin{align*}
		&\frac{1}{\sum^{K-1}_{i=0}t_{i}}  
		\sum^{K-1}_{k=0}t_{k} \left( a_{k}(x,y) -b_{k+1}(x,y) 
		-c_{k}\right)\\
		\leq & \frac{1}{\sum^{K-1}_{i=0}t_{i}}  
		\sum^{K-1}_{k=0}t_{k}  a_{k}(x,y) - t_{k+1}a_{k+1}(x,y)\\
		= &  \frac{t_{0}}{\sum^{K-1}_{i=0}t_{i}}  a_{0}(x,y)  -  
		\frac{t_{K}}{\sum^{K-1}_{i=0}t_{i}}   a_{K}(x,y) \\
		\stackrel{\cref{lemma:fxk+1yxyk+1:ak}}{=}&   \frac{t_{0}}{2\sum^{K-1}_{i=0}t_{i}}   
		\left( \frac{1}{\tau_{0}} 
		\norm{x-x^{0}}^{2} 
		+\frac{1}{\sigma_{0}}\norm{y-y^{0}}^{2}  \right)
		-  \frac{t_{K}}{2\sum^{K-1}_{i=0}t_{i}} \left(  \frac{1}{\tau_{K}} 
		\norm{x-x^{K}}^{2} 
		+\frac{1}{\sigma_{K}}\norm{y-y^{K}}^{2}  \right) \\
		& \scalemath{0.9}{- \frac{t_{K}}{2\sum^{K-1}_{i=0}t_{i}}\left( 
		\frac{L_{xx}\lambda_{K}}{\alpha_{K}} + 
		\frac{ L_{yx}\theta_{K}}{\gamma_{K}} \right) \norm{x^{K}-x^{K-1}}^{2} 
		- 
		\frac{t_{K}}{2\sum^{K-1}_{i=0}t_{i}}\left(\frac{L_{xy}\lambda_{K}}{\beta_{K}}+\frac{ 
			L_{yy}\theta_{K}}{\delta_{K}} 
		\right) 
		\norm{y^{K}-y^{K-1}}^{2} }\\
		&-  \frac{\lambda_{K} t_{K}}{\sum^{K-1}_{i=0}t_{i}}  
		\innp{\nabla_{x}\Phi(x^{K},y^{K}) 
			-\nabla_{x}\Phi(x^{K-1},y^{K-1}),x-x^{K}}  \\ 
		&+ \frac{\theta_{K}  t_{K}}{\sum^{K-1}_{i=0}t_{i}}   
		\innp{	 
			\nabla_{y}\Phi(x^{K},y^{K}) 
			-\nabla_{y}\Phi(x^{K-1},y^{K-1}),y-y^{K}  } \\
		\leq &\frac{t_{0}}{2\sum^{K-1}_{i=0}t_{i}}   
		\left( \frac{1}{\tau_{0}} 
		\norm{x-x^{0}}^{2} 
		+\frac{1}{\sigma_{0}}\norm{y-y^{0}}^{2}  \right)
		-  \frac{t_{K}}{2\sum^{K-1}_{i=0}t_{i}} \left(  \frac{1}{\tau_{K}} 
		\norm{x-x^{K}}^{2} 
		+\frac{1}{\sigma_{K}}\norm{y-y^{K}}^{2} \right) \\
		&- \scalemath{0.9}{ \frac{t_{K}}{2\sum^{K-1}_{i=0}t_{i}}\left( 
		\frac{L_{xx}\lambda_{K}}{\alpha_{K}} + 
		\frac{ L_{yx}\theta_{K}}{\gamma_{K}} \right) \norm{x^{K}-x^{K-1}}^{2} 
		- 
		\frac{t_{K}}{2\sum^{K-1}_{i=0}t_{i}}\left(\frac{L_{xy}\lambda_{K}}{\beta_{K}}+\frac{ 
			L_{yy}\theta_{K}}{\delta_{K}} 
		\right) 
		\norm{y^{K}-y^{K-1}}^{2} }\\
		&+\scalemath{0.85}{ \frac{L_{xx}t_{K}\lambda_{K}}{2\sum^{K-1}_{i=0}t_{i}} \left( 
		\alpha_{K} 
		\norm{x -x^{K}}^{2} +\frac{1}{\alpha_{K} } \norm{x^{K}
			-x^{K-1}}^{2}   \right) + \frac{L_{xy}t_{K}\lambda_{K}}{2\sum^{K-1}_{i=0}t_{i}}  
		\left( \beta_{K}  \norm{x -x^{K}}^{2} 
		+\frac{1}{\beta_{K} } 
		\norm{y^{K} -y^{K-1}}^{2}  \right)} \\
		&+\scalemath{0.85}{ \frac{L_{yx}t_{K}\theta_{K}}{2\sum^{K-1}_{i=0}t_{i}}  \left( 
		\gamma_{K} 
		\norm{y -y^{K}}^{2} +\frac{1}{\gamma_{K} } \norm{x^{K}
			-x^{K-1}}^{2}   \right) + \frac{L_{yy}t_{K}\theta_{K}}{2\sum^{K-1}_{i=0}t_{i}}  
			\left( 
		\delta_{K}  \norm{y -y^{K}}^{2} 
		+\frac{1}{\delta_{K} } 
		\norm{y^{K} -y^{K-1}}^{2}  \right)}\\
		=& \frac{t_{0}}{2\sum^{K-1}_{i=0}t_{i}}   
		\left( \frac{1}{\tau_{0}} 
		\norm{x-x^{0}}^{2} 
		+\frac{1}{\sigma_{0}}\norm{y-y^{0}}^{2}  \right)\\
		&-  \frac{t_{K}}{2\sum^{K-1}_{i=0}t_{i}}
		\left(\frac{1}{\tau_{K}} -\lambda_{K} \left( L_{xx}\alpha_{K} + L_{xy}\beta_{K}\right) 
		\right)
		\norm{x -x^{K}}^{2}\\
		&-  \frac{t_{K}}{2\sum^{K-1}_{i=0}t_{i}} 
		\left(  \frac{1}{\sigma_{K}} -\theta_{K} \left(L_{yx}\gamma_{K}+L_{yy}\delta_{K}  
		\right)\right)
		\norm{y -y^{K}}^{2}.
	\end{align*}

	\cref{lemma:fergodic:abk:0leqf}:  Because $(x^{*},y^{*})$ is a saddle-point of $f$, 
	due to \cref{eq:saddle-point}, it 	is 	easy to see  that  
	$0 \leq		f(\hat{x}_{K},y^{*}) 	-f(x^{*},\hat{y}_{K})$.
	Therefore, the required result follows immediately from \cref{lemma:fergodic:ineq} 
	and \cref{lemma:fergodic:abk:f}	above. 
\end{proof}

\subsection{Convergence under Convex Assumptions} 
\label{subsection:convergence}

Recall from our global assumption \cref{assumption} that 
$(\forall i \in \{1,2\})$ $f_{i}$ is convex with modulus $\mu_{i} \geq 0$.
In this section, we require neither $\mu_{1} >0$ nor $\mu_{2} >0$, that is, 
$f_{1}$ and $f_{2}$ are only required to be convex but not strongly convex.

\begin{theorem} \label{theorem:fconverge}
	Let $(x,y)$ be in $\mathcal{H}_{1} \times \mathcal{H}_{2}$, let $(x^{*},y^{*})$ 
	be a saddle-point of $f$, and let 
	$(\forall k \in \mathbf{N})$ $a_{k}(x,y)$, $b_{k+1}(x,y)$, and $c_{k}$ be defined in 
	\cref{lemma:fxk+1yxyk+1:ak}, 
	\cref{lemma:fxk+1yxyk+1:bk}, and \cref{lemma:fxk+1yxyk+1:ck}, respectively. 
	Suppose that $(\forall k \in \mathbf{N})$ 
	$\lambda_{k} =\theta_{k} \leq 1$ and $t_{k}=\frac{t_{0}}{\theta_{1}\cdots \theta_{k}}$ 
	with
	$\lambda_{0} =\theta_{0}=1$ and $t_{0} \in \mathbf{R}_{++}$, and
  that $(\forall k \in \mathbf{N})$ $\tau_{k+1} \geq 
	\frac{\tau_{k}}{\theta_{k+1} 
		(1+\mu_{1}\tau_{k})}$ and $\sigma_{k+1} \geq \frac{\sigma_{k}}{\theta_{k+1} 
		(1+\mu_{2}\sigma_{k})}$. 
	
	Suppose that  $\eta_{x}:=1-\sup_{k \in \mathbf{N}}\tau_{k}\left(   
	L_{xx}\sup_{k \in \mathbf{N}}\lambda_{k}\alpha_{k} 
	+L_{xy}\sup_{k \in \mathbf{N}}\lambda_{k}\beta_{k} +\frac{L_{xx}}{\inf_{k \in 
			\mathbf{N}}\alpha_{k}} +\frac{L_{yx}}{\inf_{k \in \mathbf{N}}\gamma_{k}}  
	\right) \in \mathbf{R}_{++}$
	and 
	$\eta_{y}:= 1- \sup_{k \in \mathbf{N}}\sigma_{k} \left(  L_{yx}\sup_{k \in \mathbf{N}} 
	\theta_{k} \gamma_{k} + 
	L_{yy}\sup_{k \in \mathbf{N}}\theta_{k}\delta_{k} + \frac{L_{xx}}{\inf_{k \in 
			\mathbf{N}}\beta_{k}} +\frac{L_{yy}}{\inf_{k \in \mathbf{N}}\delta_{k}}  
	\right) \in \mathbf{R}_{++}$.
	We have the following statements.  
		\begin{enumerate}
			\item \label{theorem:fconverge:minmaxgap} 
			$0 \leq f(\hat{x}_{K},y^{*}) -f(x^{*},\hat{y}_{K}) \leq \frac{1}{K}   
			\left( \frac{1}{2\tau_{0}} 
			\norm{x^{*}-x^{0}}^{2} 
			+\frac{1}{2\sigma_{0}}\norm{y^{*}-y^{0}} \right)$.
			\item \label{theorem:fconverge:tkak} $(\forall k \in \mathbf{N})$ 
			$t_{k}a_{k}(x^{*},y^{*}) \geq 
			t_{k+1}a_{k+1}(x^{*},y^{*}) +\frac{t_{0}}{2} \left( 
			\frac{\eta_{x}}{\tau_{k}} \norm{x^{k+1}-x^{k}}^{2} +
			\frac{\eta_{y}}{\sigma_{k}} \norm{y^{k+1}-y^{k}}^{2}  
			\right)$.
			\item   \label{theorem:fconverge:basic} $\lim_{k \to \infty} 
			t_{k}a_{k}(x^{*},y^{*}) \in \mathbf{R}_{+}$ exists and $(x^{k})_{k \in 
			\mathbf{N}}$ and 
			$(y^{k})_{k \in \mathbf{N}}$ are bounded. 
			
				\item  \label{theorem:fconverge:f}  $f(\hat{x}_{k},\hat{y}_{k}) 
			-f(x^{*},y^{*}) \to 0$.
		\end{enumerate}
	\end{theorem}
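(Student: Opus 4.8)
The plan is to dispatch \cref{theorem:fconverge:minmaxgap}--\cref{theorem:fconverge:basic} quickly from the machinery already built, and then use them to obtain \cref{theorem:fconverge:f} by a sandwich argument. For \cref{theorem:fconverge:minmaxgap} I would apply \cref{lemma:fergodic}\cref{lemma:fergodic:abk:0leqf} at the saddle-point $(x^{*},y^{*})$: the standing hypotheses $\eta_{x},\eta_{y}\in\mathbf{R}_{++}$ force $\frac{1}{\tau_{K}}-\lambda_{K}(L_{xx}\alpha_{K}+L_{xy}\beta_{K})\geq\frac{\eta_{x}}{\tau_{K}}>0$ and likewise the $y$-coefficient, so the two nonpositive tail terms in that bound may be discarded, leaving $0\leq f(\hat{x}_{K},y^{*})-f(x^{*},\hat{y}_{K})\leq\frac{t_{0}}{2\sum_{i=0}^{K-1}t_{i}}\bigl(\frac{1}{\tau_{0}}\norm{x^{*}-x^{0}}^{2}+\frac{1}{\sigma_{0}}\norm{y^{*}-y^{0}}^{2}\bigr)$; since $\lambda_{k}=\theta_{k}\leq 1$ we get $t_{k}=t_{0}/(\theta_{1}\cdots\theta_{k})\geq t_{0}$, hence $\sum_{i=0}^{K-1}t_{i}\geq Kt_{0}$ and $t_{0}/\sum_{i=0}^{K-1}t_{i}\leq 1/K$, which is the stated bound. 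For \cref{theorem:fconverge:tkak} I would apply \cref{prop:fxk+1yxyk+1} at $(x,y)=(x^{*},y^{*})$; the left-hand side $f(x^{k+1},y^{*})-f(x^{*},y^{k+1})$ is $\geq 0$ by \cref{eq:saddle-point}, so $a_{k}(x^{*},y^{*})\geq b_{k+1}(x^{*},y^{*})+c_{k}$, and multiplying by $t_{k}$ and using \cref{prop:abckparameters}\cref{prop:abckparameters:ab} ($t_{k}b_{k+1}\geq t_{k+1}a_{k+1}$), \cref{prop:abckparameters}\cref{prop:abckparameters:c} (the lower bound on $c_{k}$), and $t_{k}\geq t_{0}$ gives exactly \cref{theorem:fconverge:tkak}. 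Then \cref{theorem:fconverge:basic} follows: \cref{theorem:fconverge:tkak} makes $(t_{k}a_{k}(x^{*},y^{*}))_{k}$ nonincreasing, \cref{lemma:akgeq} with the positivity of its coefficients (again from $\eta_{x},\eta_{y}>0$) makes it $\geq 0$, so it converges in $\mathbf{R}_{+}$; the same lower bound, rewritten as $t_{k}a_{k}(x^{*},y^{*})\geq\mathrm{const}\cdot(\norm{x^{*}-x^{k}}^{2}+\norm{y^{*}-y^{k}}^{2})$ with a strictly positive constant, squeezes $(x^{k})_{k}$ and $(y^{k})_{k}$ to be bounded.

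For \cref{theorem:fconverge:f}, the main preliminary step is the observation that for every fixed $(x,y)\in\dom f_{1}\times\dom f_{2}$ and every $K\in\mathbf{N}\smallsetminus\{0\}$, chaining \cref{lemma:fergodic}\cref{lemma:fergodic:ineq} with \cref{lemma:fergodic}\cref{lemma:fergodic:abk:f} and dropping the nonpositive tail terms yields
\[
f(\hat{x}_{K},y)-f(x,\hat{y}_{K})\leq \frac{1}{\sum_{i=0}^{K-1}t_{i}}\sum_{k=0}^{K-1}t_{k}\bigl(f(x^{k+1},y)-f(x,y^{k+1})\bigr)\leq \frac{t_{0}}{2\sum_{i=0}^{K-1}t_{i}}\Bigl(\tfrac{1}{\tau_{0}}\norm{x-x^{0}}^{2}+\tfrac{1}{\sigma_{0}}\norm{y-y^{0}}^{2}\Bigr).
\]
By \cref{theorem:fconverge:basic} the iterates are bounded, hence so are the convex combinations $(\hat{x}_{K})_{K}$ and $(\hat{y}_{K})_{K}$; and $t_{k}\geq t_{0}$ gives $\sum_{i=0}^{K-1}t_{i}\geq Kt_{0}\to\infty$. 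Consequently the right-hand side above tends to $0$ as $K\to\infty$ even if $(x,y)$ is replaced by a $K$-dependent but uniformly bounded point.

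It then remains to sandwich $f(\hat{x}_{K},\hat{y}_{K})$, which is finite because $\hat{x}_{K}\in\dom f_{1}$, $\hat{y}_{K}\in\dom f_{2}$, and $\dom f=\dom f_{1}\times\dom f_{2}$. Instantiating the displayed estimate at $x=x^{*}$, $y=\hat{y}_{K}$, its left-hand side is $f(\hat{x}_{K},\hat{y}_{K})-f(x^{*},\hat{y}_{K})$, so $f(\hat{x}_{K},\hat{y}_{K})\leq f(x^{*},\hat{y}_{K})+o(1)\leq f(x^{*},y^{*})+o(1)$, the last step being the saddle-point inequality; hence $\limsup_{K}f(\hat{x}_{K},\hat{y}_{K})\leq f(x^{*},y^{*})$. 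Symmetrically, at $x=\hat{x}_{K}$, $y=y^{*}$ the estimate gives $f(\hat{x}_{K},y^{*})-f(\hat{x}_{K},\hat{y}_{K})\leq o(1)$, and $f(\hat{x}_{K},y^{*})\geq f(x^{*},y^{*})$, so $\liminf_{K}f(\hat{x}_{K},\hat{y}_{K})\geq f(x^{*},y^{*})$; combining the two gives $f(\hat{x}_{K},\hat{y}_{K})\to f(x^{*},y^{*})$. Alternatively one may verify the two hypotheses of \cref{lemma:fxkykx*y*}: the displayed estimate gives each averaged quantity there a nonpositive $\limsup$, while Jensen's inequality (convexity of $f(\cdot,\hat{y}_{K})$, concavity of $f(x^{*},\cdot)$, and symmetrically) together with \cref{eq:saddle-point} shows that their sum dominates $f(\hat{x}_{K},y^{*})-f(x^{*},\hat{y}_{K})\geq 0$, forcing both to converge to $0$, and then \cref{lemma:fxkykx*y*} delivers the conclusion directly.

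The only genuinely delicate point is the one just exploited: the per-iterate inequalities coming from \cref{prop:fxk+1yxyk+1} and \cref{lemma:fergodic} are stated for a \emph{fixed} test point $(x,y)$, whereas to control $f(\hat{x}_{K},\hat{y}_{K})$ one must evaluate them at the moving ergodic averages $(x^{*},\hat{y}_{K})$ and $(\hat{x}_{K},y^{*})$; this is harmless precisely because \cref{theorem:fconverge:basic} keeps $\norm{\hat{x}_{K}-x^{0}}$ and $\norm{\hat{y}_{K}-y^{0}}$ bounded, so the $O\bigl(1/\sum_{i=0}^{K-1}t_{i}\bigr)$ right-hand sides still vanish. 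Beyond that, the argument is a direct assembly of \cref{prop:fxk+1yxyk+1}, \cref{lemma:akgeq}, \cref{prop:abckparameters}, \cref{lemma:fergodic}, \cref{lemma:fxkykx*y*}, and the saddle-point inequality.
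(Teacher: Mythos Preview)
Your proposal is correct and follows essentially the same approach as the paper: parts \cref{theorem:fconverge:minmaxgap}--\cref{theorem:fconverge:basic} are assembled exactly as the paper does from \cref{prop:fxk+1yxyk+1}, \cref{lemma:akgeq}, \cref{prop:abckparameters} (equivalently \cref{lemma:abck}), and \cref{lemma:fergodic}, using $t_{k}\geq t_{0}$ and the positivity of the coefficients implied by $\eta_{x},\eta_{y}>0$. For \cref{theorem:fconverge:f} the paper takes your second route---it applies the inequalities \cref{eq:lemma:fxkykx*y*:x}--\cref{eq:lemma:fxkykx*y*:y} of \cref{lemma:fxkykx*y*} and then bounds the resulting weighted sums via \cref{lemma:fergodic} at the moving points $(x^{*},\hat{y}_{k})$ and $(\hat{x}_{k},y^{*})$---while your first route (instantiating the ergodic gap $f(\hat{x}_{K},y)-f(x,\hat{y}_{K})$ directly at those same points and then using the saddle-point inequality) is a mild streamlining of the same idea; both rely on the boundedness from \cref{theorem:fconverge:basic} to kill the $K$-dependent right-hand side.
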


\begin{proof}
		Note that our assumptions $(\forall k \in \mathbf{N})$ 
	$\lambda_{k} =\theta_{k} \leq 1$ and $t_{k}=\frac{t_{0}}{\theta_{1}\cdots \theta_{k}}$ 
	with	$\lambda_{0} =\theta_{0}=1$ and $t_{0} \in \mathbf{R}_{++}$ ensure that
\begin{align}  \label{eq:prop:proxproxdiff:tk}
		(\forall k \in \mathbf{N}) \quad t_{k} \geq t_{0}>0,
\end{align}
	which guarantees that
\begin{align} \label{eq:prop:proxproxdiff:sumtk}
		(\forall K \in \mathbf{N} \smallsetminus \{0\}) \quad	\sum_{i=0}^{K-1}t_{i} \geq 
		Kt_{0}>0.
\end{align}
	Because $(x^{*},y^{*})$ is a saddle-point of $f$,
	via \cref{eq:saddle-point}, we know that 
	$f(x^{k+1},y^{*}) -f(x^{*},y^{k+1})   \geq 0$. Combine this with 
	\cref{prop:fxk+1yxyk+1} and \cref{lemma:abck}\cref{lemma:abck:bk}  to derive that 
	\begin{subequations}\label{eq:prop:proxproxdiff}
		\begin{align}
			0 &\leq t_{k} \left( f(x^{k+1},y^{*}) -f(x^{*},y^{k+1}) \right) \\
			&\leq t_{k} \left( a_{k}(x^{*},y^{*}) - b_{k+1}(x^{*},y^{*}) 
			- c_{k}  \right)\\
			&\leq t_{k}a_{k}(x^{*},y^{*}) -t_{k+1}a_{k+1}(x^{*},y^{*}) 
			-t_{k}c_{k}.
		\end{align}	
	\end{subequations}
	
	Clearly,   $\eta_{x}:=1-\sup_{k \in \mathbf{N}}\tau_{k}\left(   
	L_{xx}\sup_{k \in \mathbf{N}}\lambda_{k}\alpha_{k} 
	+L_{xy}\sup_{k \in \mathbf{N}}\lambda_{k}\beta_{k} +\frac{L_{xx}}{\inf_{k \in 
			\mathbf{N}}\alpha_{k}} +\frac{L_{yx}}{\inf_{k \in \mathbf{N}}\gamma_{k}}  
	\right) \in \mathbf{R}_{++}$
	and 
	$\eta_{y}:= 1- \sup_{k \in \mathbf{N}}\sigma_{k} \left(  L_{yx}\sup_{k \in \mathbf{N}} 
	\theta_{k} \gamma_{k} + 
	L_{yy}\sup_{k \in \mathbf{N}}\theta_{k}\delta_{k} + \frac{L_{xx}}{\inf_{k \in 
			\mathbf{N}}\beta_{k}} +\frac{L_{yy}}{\inf_{k \in \mathbf{N}}\delta_{k}}  
	\right) \in \mathbf{R}_{++}$ imply, respectively,  that 
	\begin{subequations}\label{eq:prop:proxproxdiff:exists:minmaxgap:sup}
		\begin{align}
			&L_{1}^{a}:= \frac{1}{\sup_{k \in \mathbf{N}}\tau_{k}} -L_{xx}\sup_{k \in 
				\mathbf{N}}\lambda_{k}\alpha_{k} 
			-L_{xy}\sup_{k \in \mathbf{N}}\lambda_{k}\beta_{k}  >0;\\
			&L_{2}^{a} := \frac{1}{\sup_{k \in \mathbf{N}} \sigma_{k \in \mathbf{N}}} -
			L_{yx}\sup_{k \in \mathbf{N}} 
			\theta_{k} \gamma_{k} -
			L_{yy}\sup_{k \in \mathbf{N}}\theta_{k}\delta_{k}>0.
		\end{align}
	\end{subequations}

	Combine our assumptions with \cref{lemma:akgeq} and   
	\cref{lemma:abck}\cref{lemma:abck:ck} to deduce that 
\begin{align}  \label{eq:prop:proxproxdiff:exists:ak:ak}
		(\forall k \in \mathbf{N}) \quad a_{k}(x^{*},y^{*}) \geq 
		L^{a}_{1}\norm{x^{*}-x^{k}}^{2} 
		+L^{a}_{2}\norm{y^{*}-y^{k}}^{2}\geq 0.
\end{align}

		Applying
	\cref{lemma:fergodic}\cref{lemma:fergodic:ineq}$\&$\cref{lemma:fergodic:abk:f} in 
	the first two inequalities and employing 
	\cref{eq:prop:proxproxdiff:exists:minmaxgap:sup} and 
	\cref{eq:prop:proxproxdiff:sumtk} in the last inequality, we 
	have that for every $K \in \mathbf{N} \smallsetminus \{0\}$,
	\begin{subequations}\label{eq:prop:proxproxdiff:exists}
		\begin{align}
			&\frac{1}{\sum^{K-1}_{i=0}t_{i}}  
			\sum^{K-1}_{k=0}t_{k} \left( f(x^{k+1},y) -f(x,y^{k+1})\right)\\
			\leq&\frac{1}{\sum^{K-1}_{i=0}t_{i}}  
			\sum^{K-1}_{k=0}t_{k} \left( a_{k}(x,y) -b_{k+1}(x,y) -c_{k}\right)\\
			\leq & \frac{t_{0}}{2\sum^{K-1}_{i=0}t_{i}}   
			\left( \frac{1}{\tau_{0}} 
			\norm{x-x^{0}}^{2} 
			+\frac{1}{\sigma_{0}}\norm{y-y^{0}}^{2} \right)\\
			&-  
			\frac{t_{K}}{2\sum^{K-1}_{i=0}t_{i}}
			\left(\frac{1}{\tau_{K}} -\lambda_{K} \left( L_{xx}\alpha_{K} +
			L_{xy}\beta_{K}\right) 
			\right)
			\norm{x -x^{K}}^{2}\\
			&-  \frac{t_{K}}{2\sum^{K-1}_{i=0}t_{i}} 
			\left(  \frac{1}{\sigma_{K}} -\theta_{K} 
			\left(L_{yx}\gamma_{K}+L_{yy}\delta_{K}  
			\right)\right)
			\norm{y -y^{K}}^{2}\\
			\leq & \frac{1}{2K}   
			\left( \frac{1}{\tau_{0}} 
			\norm{x-x^{0}}^{2} 
			+\frac{1}{\sigma_{0}}\norm{y-y^{0}}^{2} \right).
		\end{align}
	\end{subequations}

	\cref{theorem:fconverge:minmaxgap}:	Combine 
	 \cref{lemma:fergodic}\cref{lemma:fergodic:abk:0leqf}  with 	
	 \cref{eq:prop:proxproxdiff:exists:minmaxgap:sup} and 
	 \cref{eq:prop:proxproxdiff:sumtk} 	to derive that	
	 \begin{align*}
	 	0 \leq & f(\hat{x}_{K},y^{*}) -f(x^{*},\hat{y}_{K})\\
	 	\leq &  \frac{t_{0}}{2\sum^{K-1}_{i=0}t_{i}}   
	 	\left( \frac{1}{\tau_{0}} 
	 	\norm{x^{*}-x^{0}}^{2} 
	 	+\frac{1}{\sigma_{0}}\norm{y^{*}-y^{0}}^{2} \right)\\
	 	&-  \frac{t_{K}}{2\sum^{K-1}_{i=0}t_{i}}
	 	\left(\frac{1}{\tau_{K}} -\lambda_{K} \left( L_{xx}\alpha_{K} +
	 	L_{xy}\beta_{K}\right) 
	 	\right)
	 	\norm{x^{*} -x^{K}}^{2}\\
	 	&-  \frac{t_{K}}{2\sum^{K-1}_{i=0}t_{i}} 
	 	\left(  \frac{1}{\sigma_{K}} -\theta_{K} 
	 	\left(L_{yx}\gamma_{K}+L_{yy}\delta_{K}  
	 	\right)\right)
	 	\norm{y^{*} -y^{K}}^{2}\\
	 	\leq	&  \frac{1}{K}   
	 	\left( \frac{1}{2\tau_{0}} 
	 	\norm{x^{*}-x^{0}}^{2} 
	 	+\frac{1}{2\sigma_{0}}\norm{y^{*}-y^{0}} \right).
	 \end{align*}
	 
	 \cref{theorem:fconverge:tkak}: In view of  \cref{eq:prop:proxproxdiff}, 
	 \cref{eq:prop:proxproxdiff:exists:ak:ak},
	 \cref{lemma:abck}\cref{lemma:abck:ck}, and  \cref{eq:prop:proxproxdiff:tk}, 
	 we observe that
\[ 
	 	(\forall k \in \mathbf{N}) \quad 	t_{k}a_{k}(x^{*},y^{*}) \geq 
	 	t_{k+1}a_{k+1}(x^{*},y^{*}) +\frac{t_{0}}{2} \left( 
	 	\frac{\eta_{x}}{\tau_{k}} \norm{x^{k+1}-x^{k}}^{2} +
	 	\frac{\eta_{y}}{\sigma_{k}} \norm{y^{k+1}-y^{k}}^{2}  
	 	\right).
\]
	 
	 \cref{theorem:fconverge:basic}: According to 
	 \cref{eq:prop:proxproxdiff:exists:ak:ak} and 
	 \cref{theorem:fconverge:tkak}, we know that $\left( 
	 t_{k}a_{k}(x^{*},y^{*}) \right)_{k \in \mathbf{N}}$ is monotone nonincreasing and 
	 bounded  below by $0$, which implies that $\lim_{k \to \infty} 
	 t_{k}a_{k}(x^{*},y^{*})$ exists. 
	 
	 Furthermore, this combined with 
	 \cref{eq:prop:proxproxdiff:exists:ak:ak} and \cref{eq:prop:proxproxdiff:tk}
	 implies that $t_{0}a_{0}(x^{*},y^{*}) \geq t_{k}a_{k}(x^{*},y^{*}) \geq 
	 t_{k}\left( L^{a}_{1}\norm{x^{*}-x^{k}}^{2} 
	 +L^{a}_{2}\norm{y^{*}-y^{k}}^{2}\right) \geq t_{0}\left( 
	 L^{a}_{1}\norm{x^{*}-x^{k}}^{2} 
	 +L^{a}_{2}\norm{y^{*}-y^{k}}^{2}\right) $, which derives the boundedness of 
	 $(x^{k})_{k \in 	\mathbf{N}}$ and $(y^{k})_{k \in \mathbf{N}}$.

	 \cref{theorem:fconverge:f}: According to  \cref{lemma:fxkykx*y*},
	 \cref{lemma:fergodic}\cref{lemma:fergodic:ineq}$\&$\cref{lemma:fergodic:abk:f}, 
	 and  \cref{eq:prop:proxproxdiff:exists}, we have that for every 
	 $k \in \mathbf{N}  \smallsetminus \{0\}$,
	 \begin{subequations}\label{eq:prop:proxproxdiff:exists:f}
	 	\begin{align}
	 		&f(\hat{x}_{k},\hat{y}_{k}) -f(x^{*},y^{*}) \leq \frac{1}{k}   
	 		\left( \frac{1}{2\tau_{0}} 
	 		\norm{x^{*}-x^{0}}^{2} 
	 		+\frac{1}{2\sigma_{0}}\norm{\hat{y}_{k}-y^{0}} \right); \\
	 		&f(x^{*},y^{*}) -f(\hat{x}_{k},\hat{y}_{k})	\leq
	 		\frac{1}{k}   
	 		\left( \frac{1}{2\tau_{0}} 
	 		\norm{\hat{x}_{k}-x^{0}}^{2} 
	 		+\frac{1}{2\sigma_{0}}\norm{y^{*}-y^{0}} \right).
	 	\end{align}
	 \end{subequations}
	 Clearly, the boundedness of $((x^{k},y^{k}))_{k \in \mathbf{N}}$ obtained in 
	  \cref{theorem:fconverge:basic} yields   the boundedness of 
	 $\left(\left(\hat{x}_{k},\hat{y}_{k}\right)\right)_{k \in \mathbf{N}}$.  
	 This result together with \cref{eq:prop:proxproxdiff:exists:f} guarantees that 
\[ 
	 	\frac{1}{k}   
	 	\left( \frac{1}{2\tau_{0}} 
	 	\norm{x^{*}-x^{0}}^{2} 
	 	+\frac{1}{2\sigma_{0}}\norm{\hat{y}_{k}-y^{0}} \right) \to 0 \text{ and }
	 	\frac{1}{k}   
	 	\left( \frac{1}{2\tau_{0}} 
	 	\norm{\hat{x}_{k}-x^{0}}^{2} 
	 	+\frac{1}{2\sigma_{0}}\norm{y^{*}-y^{0}} \right) \to 0,
\]
	 as $k$ goes to infinity. Altogether, we obtain that $f(\hat{x}_{k},\hat{y}_{k}) 
	 -f(x^{*},y^{*}) \to 0$.
\end{proof}

\begin{proposition} \label{prop:proxproxdiff}
	Let $(x,y)$ be in $\mathcal{H}_{1} \times \mathcal{H}_{2}$, let $(x^{*},y^{*})$ 
	be a saddle-point of $f$, and let 
	$(\forall k \in \mathbf{N})$ $a_{k}(x,y)$, $b_{k+1}(x,y)$, and $c_{k}$ be defined in 
	\cref{lemma:fxk+1yxyk+1:ak}, 
	\cref{lemma:fxk+1yxyk+1:bk}, and \cref{lemma:fxk+1yxyk+1:ck}, respectively. 
	Suppose that $(\forall k \in \mathbf{N})$ 
	$\lambda_{k} =\theta_{k} \leq 1$ and $t_{k}=\frac{t_{0}}{\theta_{1}\cdots \theta_{k}}$ 
	with
	$\lambda_{0} =\theta_{0}=1$ and $t_{0} \in \mathbf{R}_{++}$, and
	that $(\forall k \in \mathbf{N})$ $\tau_{k+1} \geq 
	\frac{\tau_{k}}{\theta_{k+1} 
		(1+\mu_{1}\tau_{k})}$ and $\sigma_{k+1} \geq \frac{\sigma_{k}}{\theta_{k+1} 
		(1+\mu_{2}\sigma_{k})}$. 
	Suppose that $\sup_{k \in \mathbf{N}} \tau_{k} <\infty$ and $\sup_{k \in 
	\mathbf{N}}\sigma_{k}<\infty$. 
	
	Suppose that  $\eta_{x}:=1-\sup_{k \in \mathbf{N}}\tau_{k}\left(   
	L_{xx}\sup_{k \in \mathbf{N}}\lambda_{k}\alpha_{k} 
	+L_{xy}\sup_{k \in \mathbf{N}}\lambda_{k}\beta_{k} +\frac{L_{xx}}{\inf_{k \in 
			\mathbf{N}}\alpha_{k}} +\frac{L_{yx}}{\inf_{k \in \mathbf{N}}\gamma_{k}}  
	\right) \in \mathbf{R}_{++}$
	and 
	$\eta_{y}:= 1- \sup_{k \in \mathbf{N}}\sigma_{k} \left(  L_{yx}\sup_{k \in \mathbf{N}} 
	\theta_{k} \gamma_{k} + 
	L_{yy}\sup_{k \in \mathbf{N}}\theta_{k}\delta_{k} + \frac{L_{xx}}{\inf_{k \in 
			\mathbf{N}}\beta_{k}} +\frac{L_{yy}}{\inf_{k \in \mathbf{N}}\delta_{k}}  
	\right) \in \mathbf{R}_{++}$.
	
	We have the following statements.
	\begin{enumerate}
		 	\item \label{prop:proxproxdiff:limts} 
		 $\lim_{k\to \infty}x^{k+1}-x^{k}=0$ and
		 $\lim_{k \to \infty} y^{k}-y^{k+1} =0$.
		 
		 Consequently,
		 $\lim_{k \to \infty}\nabla_{x}\Phi(x^{k},y^{k}) 
		 -\nabla_{x}\Phi(x^{k-1},y^{k-1})=0$,
		 and $\lim_{k \to \infty}\nabla_{y}\Phi(x^{k},y^{k}) 
		 -\nabla_{y}\Phi(x^{k-1},y^{k-1})=0$.
		 
		 \item 	\label{prop:proxproxdiff:exists}	Suppose that $(\forall k \in \mathbf{N})$ 
		 $\lambda_{k} =\theta_{k} \equiv 1$.  Suppose that  
		 $\inf_{k \in \mathbf{N}}\alpha_{k}>0$,  $\inf_{k \in \mathbf{N}}\beta_{k}>0$, 
		 $\inf_{k \in \mathbf{N}}\gamma_{k}>0$,   and $\inf_{k \in 
		 	\mathbf{N}}\delta_{k}>0$.  
	 	Then the   following statements hold. 
		 \begin{enumerate}
		 	\item   \label{prop:proxproxdiff:exists:existence} 
		 	$\lim_{k\to \infty}\frac{1}{\tau_{k}} \norm{x^{*}-x^{k}}^{2} 
		 	+\frac{1}{\sigma_{k}}\norm{y^{*}-y^{k}}$ exists. 
		 	
		 	\item  \label{prop:proxproxdiff:exists:saddleppoint} 
		 	Suppose that $\sup_{k \in \mathbf{N}} 
		 	\tau_{k} <\infty$ and $\sup_{k \in \mathbf{N}}\sigma_{k}<\infty$.  
		 	Then every weak sequential cluster point of 
		 	$((x^{k},y^{k}))_{k \in \mathbf{N}}$  is a saddle-point of $f$.
		 \end{enumerate}
	\end{enumerate}
\end{proposition}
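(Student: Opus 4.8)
The plan is to dispatch the three assertions in order, relying on \cref{theorem:fconverge}, \cref{lemma:proxf1f2ineq}, and \cref{lemma:TMM}.

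For \cref{prop:proxproxdiff:limts}, I would start from the one-step estimate \cref{theorem:fconverge}\cref{theorem:fconverge:tkak}, which says $t_{k}a_{k}(x^{*},y^{*}) \geq t_{k+1}a_{k+1}(x^{*},y^{*}) + \frac{t_{0}}{2}\lr{\frac{\eta_{x}}{\tau_{k}}\norm{x^{k+1}-x^{k}}^{2}+\frac{\eta_{y}}{\sigma_{k}}\norm{y^{k+1}-y^{k}}^{2}}$, with the bracket nonnegative. Telescoping and using that $\lr{t_{k}a_{k}(x^{*},y^{*})}_{k}$ is nonincreasing and converges in $\mathbf{R}_{+}$ (\cref{theorem:fconverge}\cref{theorem:fconverge:basic}), I get $\sum_{k}\frac{1}{\tau_{k}}\norm{x^{k+1}-x^{k}}^{2}<\infty$ and $\sum_{k}\frac{1}{\sigma_{k}}\norm{y^{k+1}-y^{k}}^{2}<\infty$; since $\sup_{k}\tau_{k}<\infty$ and $\sup_{k}\sigma_{k}<\infty$, this forces $\sum_{k}\norm{x^{k+1}-x^{k}}^{2}<\infty$ and $\sum_{k}\norm{y^{k+1}-y^{k}}^{2}<\infty$, hence $x^{k+1}-x^{k}\to 0$ and $y^{k}-y^{k+1}\to 0$. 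The gradient statements then follow at once from the Lipschitz assumption \cref{eq:nablaPhixy} applied to the iterates, which all lie in $\dom f_{1}\times\dom f_{2}$ (noted after \cref{eq:algorithmproxif1f2}): $\norm{\nabla_{x}\Phi(x^{k},y^{k})-\nabla_{x}\Phi(x^{k-1},y^{k-1})}\le L_{xx}\norm{x^{k}-x^{k-1}}+L_{xy}\norm{y^{k}-y^{k-1}}\to 0$, and likewise for $\nabla_{y}\Phi$.

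For \cref{prop:proxproxdiff:exists:existence}, note that $\lambda_{k}=\theta_{k}\equiv 1$ makes $t_{k}\equiv t_{0}$, so \cref{theorem:fconverge}\cref{theorem:fconverge:basic} now says $a_{k}(x^{*},y^{*})$ itself converges. Reading off \cref{lemma:fxk+1yxyk+1:ak} at $(x,y)=(x^{*},y^{*})$ with $\lambda_{k}=\theta_{k}=1$, I split $a_{k}(x^{*},y^{*})=\frac{1}{2\tau_{k}}\norm{x^{*}-x^{k}}^{2}+\frac{1}{2\sigma_{k}}\norm{y^{*}-y^{k}}^{2}+R_{k}$, where $R_{k}$ gathers the two increment-norm terms $\lr{\frac{L_{xx}}{2\alpha_{k}}+\frac{L_{yx}}{2\gamma_{k}}}\norm{x^{k}-x^{k-1}}^{2}+\lr{\frac{L_{xy}}{2\beta_{k}}+\frac{L_{yy}}{2\delta_{k}}}\norm{y^{k}-y^{k-1}}^{2}$ and the two inner products of gradient increments against $x^{*}-x^{k}$ and $y^{*}-y^{k}$. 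Because $\inf_{k}\alpha_{k},\inf_{k}\beta_{k},\inf_{k}\gamma_{k},\inf_{k}\delta_{k}>0$, the coefficients in $R_{k}$ are bounded, so \cref{prop:proxproxdiff:limts} kills the increment-norm terms; and since $(x^{k})_{k},(y^{k})_{k}$ are bounded (\cref{theorem:fconverge}\cref{theorem:fconverge:basic}), Cauchy--Schwarz together with \cref{prop:proxproxdiff:limts} kills the inner-product terms. Hence $R_{k}\to 0$, so $\frac{1}{\tau_{k}}\norm{x^{*}-x^{k}}^{2}+\frac{1}{\sigma_{k}}\norm{y^{*}-y^{k}}^{2}=2\lr{a_{k}(x^{*},y^{*})-R_{k}}$ converges.

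For \cref{prop:proxproxdiff:exists:saddleppoint}, let $(x^{\infty},y^{\infty})$ be a weak sequential cluster point, say $(x^{k_{n}},y^{k_{n}})\weakly(x^{\infty},y^{\infty})$; by \cref{prop:proxproxdiff:limts} also $(x^{k_{n}+1},y^{k_{n}+1})\weakly(x^{\infty},y^{\infty})$, and since $\dom f_{1}\times\dom f_{2}$ is closed and convex (hence weakly sequentially closed) and contains every iterate, $(x^{\infty},y^{\infty})\in\dom f_{1}\times\dom f_{2}$. Denoting by $u^{k}$ and $v^{k}$ the left-hand expressions in \cref{lemma:proxf1f2ineq}\cref{lemma:proxf1f2ineq:x}\&\cref{lemma:proxf1f2ineq:y}, we have $(u^{k},v^{k})\in\partial_{x}f(x^{k+1},y^{k+1})\times\partial_{y}(-f(x^{k+1},y^{k+1}))=T(x^{k+1},y^{k+1})$ with $T$ the maximally monotone operator of \cref{lemma:TMM}. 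Each of $u^{k},v^{k}$ is the sum of gradient increments (which $\to 0$ by \cref{prop:proxproxdiff:limts}) and a term $\frac{x^{k}-x^{k+1}}{\tau_{k}}$, resp.\ $\frac{y^{k}-y^{k+1}}{\sigma_{k}}$; as the stepsize sequences stay bounded away from $0$ — e.g.\ when $\mu_{1}=\mu_{2}=0$ the prescribed recursions give $\tau_{k+1}\ge\tau_{k}$, $\sigma_{k+1}\ge\sigma_{k}$, the setting of \cref{subsection:convergence} — these terms $\to 0$, so $u^{k}\to 0$ and $v^{k}\to 0$ strongly. Then for every $\lr{(z_{1},z_{2}),(w_{1},w_{2})}\in\gra T$, monotonicity gives $\innp{(x^{k_{n}+1},y^{k_{n}+1})-(z_{1},z_{2}),\,(u^{k_{n}},v^{k_{n}})-(w_{1},w_{2})}\ge 0$; passing to the limit (first factor converges weakly, second converges in norm to $(0,0)$) yields $\innp{(x^{\infty},y^{\infty})-(z_{1},z_{2}),\,-(w_{1},w_{2})}\ge 0$, so maximality of $T$ forces $(0,0)\in T(x^{\infty},y^{\infty})$, i.e.\ $0\in\partial f_{1}(x^{\infty})+\nabla_{x}\Phi(x^{\infty},y^{\infty})$ and $0\in-\nabla_{y}\Phi(x^{\infty},y^{\infty})+\partial f_{2}(y^{\infty})$; by the equivalence recorded right after \cref{assumption}, $(x^{\infty},y^{\infty})$ is a saddle-point of $f$. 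The main obstacle is precisely this last item: one needs the \emph{strong} convergence $u^{k}\to 0$, $v^{k}\to 0$ to run the weak-times-strong limiting argument against $\gra T$, and this is where boundedness below of $(\tau_{k})$ and $(\sigma_{k})$ is essential; everything else (the split of $a_{k}$, the Cauchy--Schwarz bounds, the weak closedness of the domains) is routine once \cref{prop:proxproxdiff:limts} is established.
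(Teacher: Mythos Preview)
Your argument matches the paper's step for step: telescope \cref{theorem:fconverge}\cref{theorem:fconverge:tkak} for \cref{prop:proxproxdiff:limts}, split $a_k(x^*,y^*)$ via \cref{lemma:fxk+1yxyk+1:ak} and kill the remainder for \cref{prop:proxproxdiff:exists:existence}, and use weak--strong sequential closedness of $\gra T$ for \cref{prop:proxproxdiff:exists:saddleppoint} (the paper cites \cite[Proposition~20.38(ii)]{BauschkCombettes2017} where you write out the monotonicity-and-maximality argument by hand). Your explicit flag that $\tfrac{x^{k}-x^{k+1}}{\tau_{k}}\to 0$ requires the stepsizes to be bounded away from zero is a point the paper passes over silently; under $\theta_{k}\equiv 1$ the recursion $\tau_{k+1}\ge\tau_{k}/(1+\mu_{1}\tau_{k})$ indeed gives this when $\mu_{1}=0$, the intended regime of \cref{subsection:convergence}, so your caveat is well placed.
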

		
\begin{proof}
	\cref{prop:proxproxdiff:limts}: Applying   telescoping, and employing  
\cref{theorem:fconverge}\cref{theorem:fconverge:tkak} and the first result obtained in 	
\cref{theorem:fconverge}\cref{theorem:fconverge:basic} above, we have that 
\begin{align*}
	t_{0}a_{0}(x^{*},y^{*}) - \lim_{k \to \infty} 
	t_{k}a_{k}(x^{*},y^{*}) &\geq \frac{\eta_{x}t_{0}}{2} \sum_{k \in \mathbf{N}} 
	\frac{1}{\tau_{k}}
	\norm{x^{k+1}-x^{k}}^{2} 
	+ \frac{\eta_{y}t_{0}}{2} \sum_{k \in \mathbf{N}} \frac{1}{\sigma_{k}}
	\norm{y^{k+1}-y^{k}}^{2}\\
	&\geq \scalemath{0.95}{  \frac{\eta_{x}t_{0}}{2\sup_{k \in \mathbf{N}} 
		\tau_{k} } \sum_{k \in \mathbf{N}} 
	\norm{x^{k+1}-x^{k}}^{2} 
	+ \frac{\eta_{y}t_{0}}{2\sup_{k \in \mathbf{N}}\sigma_{k}} \sum_{k \in \mathbf{N}}
	\norm{y^{k+1}-y^{k}}^{2}},
\end{align*}
which, combining with our assumption $\sup_{k \in \mathbf{N}} 
\tau_{k} <\infty$ and $\sup_{k \in \mathbf{N}}\sigma_{k}<\infty$, yields that

$\lim_{k\to \infty}x^{k+1}-x^{k}=0$ and
$\lim_{k \to \infty} y^{k}-y^{k+1} =0$. 
Bearing this result in mind and applying \cref{eq:nablaPhixy} with 
$(x,y)=(x^{k},y^{k})$ and $(x',y')=(x^{k-1},y^{k-1})$, we easily deduce that 
$\lim_{k \to \infty}\nabla_{x}\Phi(x^{k},y^{k}) -\nabla_{x}\Phi(x^{k-1},y^{k-1})=0$ 
and $\lim_{k \to \infty}\nabla_{y}\Phi(x^{k},y^{k}) 
-\nabla_{y}\Phi(x^{k-1},y^{k-1})=0$.

\cref{prop:proxproxdiff:exists}: Because $(\forall k \in \mathbf{N})$ 	 $\lambda_{k} 
=\theta_{k} \equiv 1$, we know that 
\begin{align}  \label{eq:prop:proxproxdiff:exists:tk}
	(\forall k \in \mathbf{N}) \quad t_{k} \equiv t_{0} \in \mathbf{R}_{++}.
\end{align}
Hence, \cref{theorem:fconverge}\cref{theorem:fconverge:basic} guarantees that 
$\lim_{k \to \infty} 
a_{k}(x^{*},y^{*}) \in \mathbf{R}_{+}$ exists.

\cref{prop:proxproxdiff:exists:existence}: In view of \cref{lemma:fxk+1yxyk+1:ak}, 
\begin{align*}
	& a_{k}(x^{*},y^{*}) \\
	=&  \frac{1}{2\tau_{k}} \norm{x^{*}-x^{k}}^{2} 
	+\frac{1}{2\sigma_{k}}\norm{y^{*}-y^{k}}^{2} \\
	&+ \left( \frac{\lambda_{k}L_{xx}}{2\alpha_{k}} + 
	\frac{\theta_{k}L_{yx}}{2\gamma_{k}} \right) \norm{x^{k}-x^{k-1}}^{2} 
	+ \left(\frac{\lambda_{k}L_{xy}}{2\beta_{k}}+
	\frac{\theta_{k}L_{yy}}{2\delta_{k}} 	\right) 
	\norm{y^{k}-y^{k-1}}^{2}\\
	&+ \scalemath{0.95}{ \lambda_{k} \innp{\nabla_{x}\Phi(x^{k},y^{k}) 
		-\nabla_{x}\Phi(x^{k-1},y^{k-1}),x^{*}-x^{k}}   
	- \theta_{k} \innp{	 
		\nabla_{y}\Phi(x^{k},y^{k}) 
		-\nabla_{y}\Phi(x^{k-1},y^{k-1}),y^{*}-y^{k}  } }.
\end{align*}
Because $\inf_{k \in \mathbf{N}}\alpha_{k}>0$,  $\inf_{k \in \mathbf{N}}\beta_{k}>0$, 
$\inf_{k \in \mathbf{N}}\gamma_{k}>0$,  and  $\inf_{k \in \mathbf{N}}\delta_{k}>0$, 
our results obtained in \cref{theorem:fconverge}\cref{theorem:fconverge:basic} and 
\cref{prop:proxproxdiff:limts} above   ensure that 
\begin{align*}
	&\lim_{k \to \infty} \left( \frac{\lambda_{k}L_{xx}}{2\alpha_{k}} + 
	\frac{\theta_{k}L_{yx}}{2\gamma_{k}} \right) \norm{x^{k}-x^{k-1}}^{2} =0,\\
	&\lim_{k \to \infty}\left(\frac{\lambda_{k}L_{xy}}{2\beta_{k}} 
	\frac{\theta_{k}L_{yy}}{2\delta_{k}} 
	\right) 
	\norm{y^{k}-y^{k-1}}^{2}=0,\\
	&\lim_{k \to \infty}
	\lambda_{k} \innp{\nabla_{x}\Phi(x^{k},y^{k}) 
		-\nabla_{x}\Phi(x^{k-1},y^{k-1}),x^{*}-x^{k}}, \text{ and}\\
	&\lim_{k \to \infty} \theta_{k} \innp{	 
		\nabla_{y}\Phi(x^{k},y^{k}) 
		-\nabla_{y}\Phi(x^{k-1},y^{k-1}),y^{*}-y^{k}  } =0.
\end{align*} 
Results above combined with the existence of $\lim_{k \to 
	\infty}   a_{k}(x^{*},y^{*}) \in \mathbf{R}_{+}$ yield the required existence 
of $\lim_{k\to \infty}\frac{1}{\tau_{k}} \norm{x^{*}-x^{k}}^{2} 
+\frac{1}{\sigma_{k}}\norm{y^{*}-y^{k}}$.

\cref{prop:proxproxdiff:exists:saddleppoint}: Let $(\bar{x},\bar{y}) \in 
\mathcal{H}_{1}\times \mathcal{H}_{2}$ be a weakly 
sequential cluster point of $((x^{k},y^{k}))_{k \in \mathbf{N}}$, that is, there exists a 
subsequence $((x^{k_{i}},y^{k_{i}}))_{i \in \mathbf{N}}$ of $((x^{k},y^{k}))_{k \in 
	\mathbf{N}}$
such that $x^{k_{i}} \weakly \bar{x}$ and $y^{k_{i}} \weakly \bar{y}$.

Based on 
\cref{lemma:proxf1f2ineq}\cref{lemma:proxf1f2ineq:x}$\&$\cref{lemma:proxf1f2ineq:y}, 
we have that for every  $i \in \mathbf{N}$,  

$\scalemath{0.85}{\frac{x^{k_{i}} -x^{k_{i}+1}}{\tau_{k_{i}}}  - 
\left(  
\nabla_{x}\Phi(x^{k_{i}},y^{k_{i}}) 
-\nabla_{x}\Phi(x^{k_{i}-1},y^{k_{i}-1})  \right) + \left( \nabla_{x} 
\Phi(x^{k_{i}+1},y^{k_{i}+1}) 
-\nabla_{x} \Phi(x^{k_{i}},y^{k_{i}}) \right) \in  \partial_{x} 
f(x^{k_{i}+1},y^{k_{i}+1})}$,\\
$\scalemath{0.85}{\frac{y^{k_{i}} -y^{k_{i}+1}}{\sigma_{k_{i}}} 	+ 
\left(  
\nabla_{y}\Phi(x^{k_{i}},y^{k_{i}}) 
-\nabla_{y}\Phi(x^{k_{i}-1},y^{k_{i}-1})  \right) 
- \left( \nabla_{y} \Phi(x^{k_{i}+1},y^{k_{i}+1}) 
-\nabla_{y} \Phi(x^{k_{i}},y^{k_{i}}) \right) \in  \partial_{y} \left(- 
f(x^{k_{i}+1},y^{k_{i}+1}) \right)}$.  

In view of \cref{prop:proxproxdiff:limts}, we have that 

$\frac{x^{k_{i}} -x^{k_{i}+1}}{\tau_{k_{i}}}  - 
\left(  
\nabla_{x}\Phi(x^{k_{i}},y^{k_{i}}) 
-\nabla_{x}\Phi(x^{k_{i}-1},y^{k_{i}-1})  \right) + \left( \nabla_{x} 
\Phi(x^{k_{i}+1},y^{k_{i}+1}) 
-\nabla_{x} \Phi(x^{k_{i}},y^{k_{i}}) \right)  \to 0$,\\
$\frac{y^{k_{i}} -y^{k_{i}+1}}{\sigma_{k_{i}}} 	+ 
\left(  
\nabla_{y}\Phi(x^{k_{i}},y^{k_{i}}) 
-\nabla_{y}\Phi(x^{k_{i}-1},y^{k_{i}-1})  \right) 
- \left( \nabla_{y} \Phi(x^{k_{i}+1},y^{k_{i}+1}) 
-\nabla_{y} \Phi(x^{k_{i}},y^{k_{i}}) \right) \to 0$.

According to \cref{lemma:TMM},  the operator 
$T: \mathcal{H}_{1} \times \mathcal{H}_{2} \to 2^{\mathcal{H}_{1} 
	\times 	\mathcal{H}_{2} }$ defined as 	$(\forall (\bar{x},\bar{y}) \in \mathcal{H}_{1} 
\times \mathcal{H}_{2})$ 
$T (\bar{x},\bar{y})  = \partial_{x}f(\bar{x},\bar{y}) \times \partial_{y}(-f(\bar{x},\bar{y}))$
is maximally monotone. 
Based on results obtained above and 
\cite[Proposition~20.38(ii)]{BauschkCombettes2017}, we derive that 
\[ 
	(0,0) \in  \partial_{x} f (\bar{x},\bar{y}) \times \partial_{y} \left( -f (\bar{x},\bar{y}) 
	\right),
\]
which, via \cite[Fact~2.1]{OuyangGPPA2023}, 
ensures that $ (\bar{x},\bar{y})$ is a saddle-point of $f$.
\end{proof}

 \begin{theorem} \label{theorem:weakconverge} 
 	Let $(x,y)$ be in $\mathcal{H}_{1} \times \mathcal{H}_{2}$, let $(x^{*},y^{*})$ 
 	be a saddle-point of $f$, and let 
 	$(\forall k \in \mathbf{N})$ $a_{k}(x,y)$, $b_{k+1}(x,y)$, and $c_{k}$ be defined in 
 	\cref{lemma:fxk+1yxyk+1:ak}, 
 	\cref{lemma:fxk+1yxyk+1:bk}, and \cref{lemma:fxk+1yxyk+1:ck}, respectively. 
 	Suppose that $(\forall k \in \mathbf{N})$ 
 	$\lambda_{k} =\theta_{k} \leq 1$ and $t_{k}=\frac{t_{0}}{\theta_{1}\cdots \theta_{k}}$ 
 	with
 	$\lambda_{0} =\theta_{0}=1$ and $t_{0} \in \mathbf{R}_{++}$, and
 	that $(\forall k \in \mathbf{N})$ $\tau_{k+1} \geq 
 	\frac{\tau_{k}}{\theta_{k+1} 
 		(1+\mu_{1}\tau_{k})}$ and $\sigma_{k+1} \geq \frac{\sigma_{k}}{\theta_{k+1} 
 		(1+\mu_{2}\sigma_{k})}$.   	Suppose that $(\forall k \in 
 	\mathbf{N})$ $\tau_{k}=\sigma_{k}$ and  
 $\lim_{k \to \infty} \tau_{k} =\lim_{k \to \infty} \sigma_{k} \in \mathbf{R}_{++}$.
 	
 	Suppose that  $\eta_{x}:=1-\sup_{k \in \mathbf{N}}\tau_{k}\left(   
 	L_{xx}\sup_{k \in \mathbf{N}}\lambda_{k}\alpha_{k} 
 	+L_{xy}\sup_{k \in \mathbf{N}}\lambda_{k}\beta_{k} +\frac{L_{xx}}{\inf_{k \in 
 			\mathbf{N}}\alpha_{k}} +\frac{L_{yx}}{\inf_{k \in \mathbf{N}}\gamma_{k}}  
 	\right) \in \mathbf{R}_{++}$
 	and 
 	$\eta_{y}:= 1- \sup_{k \in \mathbf{N}}\sigma_{k} \left(  L_{yx}\sup_{k \in \mathbf{N}} 
 	\theta_{k} \gamma_{k} + 
 	L_{yy}\sup_{k \in \mathbf{N}}\theta_{k}\delta_{k} + \frac{L_{xx}}{\inf_{k \in 
 			\mathbf{N}}\beta_{k}} +\frac{L_{yy}}{\inf_{k \in \mathbf{N}}\delta_{k}}  
 	\right) \in \mathbf{R}_{++}$.

 	Then the iteration sequence 
 	$((x^{k},y^{k}))_{k \in \mathbf{N}}$ converges weakly to a saddle point 
 	$(x^{*},y^{*})$  of $f$.
 \end{theorem}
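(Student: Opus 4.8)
The plan is to apply \cref{fact:weakconvege} with $\mathcal{H}:=\mathcal{H}_{1}\times\mathcal{H}_{2}$ equipped with the product inner product and with $C:=\mathcal{S}$, the set of all saddle-points of $f$, which is nonempty by our standing assumption. Two hypotheses must be checked: (i) for every $(\bar{x},\bar{y})\in\mathcal{S}$ the scalar sequence $\left(\norm{(x^{k},y^{k})-(\bar{x},\bar{y})}\right)_{k\in\mathbf{N}}$ converges, equivalently $\left(\norm{x^{k}-\bar{x}}^{2}+\norm{y^{k}-\bar{y}}^{2}\right)_{k\in\mathbf{N}}$ converges; and (ii) every weak sequential cluster point of $((x^{k},y^{k}))_{k\in\mathbf{N}}$ lies in $\mathcal{S}$. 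Since the iterates are bounded by \cref{theorem:fconverge}\cref{theorem:fconverge:basic}, such cluster points exist, and once (i) and (ii) hold, \cref{fact:weakconvege} yields precisely the claim: $((x^{k},y^{k}))_{k\in\mathbf{N}}$ converges weakly to a point of $\mathcal{S}$, which may be relabelled $(x^{*},y^{*})$.

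For (ii), I would reproduce the argument of \cref{prop:proxproxdiff}\cref{prop:proxproxdiff:exists:saddleppoint}: given a subsequence with $x^{k_{i}}\weakly\bar{x}$ and $y^{k_{i}}\weakly\bar{y}$, the inclusions of \cref{lemma:proxf1f2ineq}\cref{lemma:proxf1f2ineq:x}$\&$\cref{lemma:proxf1f2ineq:y} place an explicit point in $T(x^{k_{i}+1},y^{k_{i}+1})$, where $T$ is the maximally monotone operator of \cref{lemma:TMM}; by \cref{prop:proxproxdiff}\cref{prop:proxproxdiff:limts} (that is, $x^{k+1}-x^{k}\to0$, $y^{k+1}-y^{k}\to0$, and hence the consecutive $\nabla_{x}\Phi$- and $\nabla_{y}\Phi$-differences tend to $0$) together with the boundedness of $(\lambda_{k})_{k\in\mathbf{N}}$ and $(\theta_{k})_{k\in\mathbf{N}}$ (they are $\leq1$), that point tends to $(0,0)$, while $(x^{k_{i}+1},y^{k_{i}+1})\weakly(\bar{x},\bar{y})$; weak-strong sequential closedness of the graph of a maximally monotone operator, \cite[Proposition~20.38(ii)]{BauschkCombettes2017}, then forces $(0,0)\in T(\bar{x},\bar{y})$, i.e.\ $(\bar{x},\bar{y})$ is a saddle-point of $f$ by \cite[Fact~2.1]{OuyangGPPA2023}.

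For (i), the key point is that the particular saddle-point $(x^{*},y^{*})$ played no distinguished role in \cref{theorem:fconverge}\cref{theorem:fconverge:basic} or in \cref{prop:proxproxdiff}\cref{prop:proxproxdiff:exists:existence}, so those statements hold verbatim with $(x^{*},y^{*})$ replaced by an arbitrary $(\bar{x},\bar{y})\in\mathcal{S}$; consequently $\lim_{k\to\infty}\left(\tfrac{1}{\tau_{k}}\norm{\bar{x}-x^{k}}^{2}+\tfrac{1}{\sigma_{k}}\norm{\bar{y}-y^{k}}^{2}\right)$ exists in $\mathbf{R}_{+}$. Invoking the hypotheses $\tau_{k}=\sigma_{k}$ for all $k$ and $\tau_{k}=\sigma_{k}\to\tau\in\mathbf{R}_{++}$, this limit equals $\lim_{k\to\infty}\tfrac{1}{\tau_{k}}\left(\norm{x^{k}-\bar{x}}^{2}+\norm{y^{k}-\bar{y}}^{2}\right)$, and multiplying through by $\tau_{k}$, which converges to $\tau>0$, shows that $\left(\norm{x^{k}-\bar{x}}^{2}+\norm{y^{k}-\bar{y}}^{2}\right)_{k\in\mathbf{N}}$ converges; this is (i).

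The step I expect to be the main obstacle is (i): one must be sure that the ``existence of the limit'' reasoning behind \cref{prop:proxproxdiff}\cref{prop:proxproxdiff:exists:existence} — in particular the vanishing of the cross terms and of the $\norm{x^{k}-x^{k-1}}^{2}$, $\norm{y^{k}-y^{k-1}}^{2}$ contributions to $a_{k}(\cdot,\cdot)$, which relies on $\inf_{k}\alpha_{k}$, $\inf_{k}\beta_{k}$, $\inf_{k}\gamma_{k}$, $\inf_{k}\delta_{k}>0$ (forced here by $\eta_{x},\eta_{y}\in\mathbf{R}_{++}$) and on \cref{prop:proxproxdiff}\cref{prop:proxproxdiff:limts} — transfers to an arbitrary saddle point, and that the normalising factors $\tau_{k}=\sigma_{k}$ remain bounded away from $0$ and $\infty$ so that the weighted and unweighted distances share the same limiting behaviour; both are secured by the added hypotheses $\tau_{k}=\sigma_{k}$ and $\lim_{k}\tau_{k}=\lim_{k}\sigma_{k}\in\mathbf{R}_{++}$.
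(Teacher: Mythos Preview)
Your proof is correct and follows exactly the paper's approach: derive existence of the weighted-distance limit via \cref{prop:proxproxdiff}\cref{prop:proxproxdiff:exists:existence} (applied to an arbitrary saddle point), pass to the unweighted distance using $\tau_{k}=\sigma_{k}\to\tau\in\mathbf{R}_{++}$, invoke \cref{prop:proxproxdiff}\cref{prop:proxproxdiff:exists:saddleppoint} for the cluster points, and conclude with \cref{fact:weakconvege}. Your write-up is in fact more explicit than the paper's four-line proof in spelling out why the argument applies to every saddle point and how the auxiliary hypotheses of \cref{prop:proxproxdiff}\cref{prop:proxproxdiff:exists} are secured.
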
 

\begin{proof}
 Because $(\forall k \in \mathbf{N})$ $\tau_{k}=\sigma_{k}$ and  
$\lim_{k \to \infty} \tau_{k} =\lim_{k \to \infty} \sigma_{k} \in \mathbf{R}_{++}$, by
\cref{prop:proxproxdiff}\cref{prop:proxproxdiff:exists:existence}, we know that 	
$\lim_{k\to \infty} \norm{x^{*}-x^{k}}^{2} + \norm{y^{*}-y^{k}}$ exists.
Combine	this existence result and  
\cref{prop:proxproxdiff}\cref{prop:proxproxdiff:exists:saddleppoint} with 
\cref{fact:weakconvege}
(consider the set $C$ in  \cref{fact:weakconvege} as the set of 
all saddle-points of $f$) to obtain the required result. 
\end{proof}

\subsection{Linear Convergence under Strong Convex Assumptions}
\label{subsection:LinearConvergence}

In this subsection, we assume that both $f_{1}$ and $f_{2}$ are strongly convex, 
that is,  $\mu := \min \{\mu_{1},\mu_{2}\} >0$. 
The following result shows that  the 
sequence $((x^{k},y^{k}))_{k \in \mathbf{N}}$ of iterations converges linearly to a 
saddle-point $(x^{*},y^{*})$ of $f$.
\begin{theorem}\label{theorem:linearconverge}
	Let $(x,y)$ be in $\mathcal{H}_{1} \times \mathcal{H}_{2}$, let $(x^{*},y^{*})$ 
	be a saddle-point of $f$, and let 
	$(\forall k \in \mathbf{N})$ $a_{k}(x,y)$, $b_{k+1}(x,y)$, and $c_{k}$ be defined in 
	\cref{lemma:fxk+1yxyk+1:ak}, 
	\cref{lemma:fxk+1yxyk+1:bk}, and \cref{lemma:fxk+1yxyk+1:ck}, respectively. 
	Suppose  that 
	$(\forall k \in \mathbf{N})$ $\tau_{k}=\sigma_{k} \equiv \sigma \in 
	\mathbf{R}_{++}$, $\alpha_{k} \equiv \alpha \in \mathbf{R}_{++}$, $\beta_{k} \equiv 
	\beta \in \mathbf{R}_{++}$,  
	$\delta_{k} \equiv \delta  \in \mathbf{R}_{++}$, $\gamma_{k} \equiv 
	\gamma \in \mathbf{R}_{++}$,  and $\lambda_{k}=\theta_{k} =\theta = 
	\frac{1}{1+\mu \sigma} \in (0,1)$ with $\mu := \min \{\mu_{1},\mu_{2}\} >0$,
	that $(\forall k \in \mathbf{N})$ 
	$t_{k}=\frac{t_{0}}{ \theta^{k}}$ with $t_{0} \in \mathbf{R}_{++}$,
and  that  $\eta_{x}:= 1- \sigma \left(  
\frac{1}{1+\mu \sigma}	\left( 	L_{xx} \alpha 
	+L_{xy} \beta  \right)
	+\frac{L_{xx}}{\alpha} +\frac{L_{yx}}{\gamma}  
	\right) \in \mathbf{R}_{++}$
	and 
	$\eta_{y}:= 1- \sigma  \left(
	\frac{1}{1+\mu \sigma} \left( L_{yx} \gamma  + 
	L_{yy} \delta  \right)+ \frac{L_{xy}}{ \beta} +\frac{L_{yy}}{\delta}  
	\right) \in \mathbf{R}_{++}$. 

Let $K \in \mathbf{N}\smallsetminus	\{0\}$. 		Then we have the following statements. 
	\begin{enumerate}
		\item \label{prop:linearconverge:xy} 
$
	\scalemath{0.9}{	0\leq 2\theta \sigma \left( f(\hat{x}_{K},y^{*}) 
	-f(x^{*},\hat{y}_{K})\right) + 
		\eta_{x}\norm{x^{*} -x^{K}}^{2}  +\eta_{y}\norm{y^{*} -y^{K}}^{2}  
		\leq  \theta^{K}  \left( 
		\norm{x^{*}-x^{0}}^{2} 
		+ \norm{y^{*}-y^{0}}^{2} \right) }.
$
	
		Consequently, $0 \leq  f(\hat{x}_{K},y^{*}) -f(x^{*},\hat{y}_{K}) \leq 
		\frac{\theta^{K-1}}{2\sigma} 
		\left( 
		\norm{x^{*}-x^{0}}^{2} 
		+ \norm{y^{*}-y^{0}}^{2} \right)$; 
		
		moreover, 
		$\norm{x^{*} -x^{K}}^{2}  + \norm{y^{*} -y^{K}}^{2}  \leq  \theta^{K} 
		\frac{1}{\min\{\eta_{x},\eta_{y}\} }  
		\left( \norm{x^{*}-x^{0}}^{2} 
		+ \norm{y^{*}-y^{0}}^{2} \right)$, 
		that is, the sequence $((x^{k},y^{k}))_{k \in \mathbf{R}}$ of 
		iterations  converges linearly to a saddle-point 
		$(x^{*},y^{*})$ of $f$.

	\item \label{prop:linearconverge:f} 
$
\scalemath{0.9}{	-\frac{\theta^{K-1}}{2\sigma}	\left(  
	\norm{\hat{x}_{K}-x^{0}}^{2} 
	+ \norm{y^{*}-y^{0}}^{2} \right) \leq
	f(\hat{x}_{K},\hat{y}_{K}) -f(x^{*},y^{*})  
	\leq  \frac{\theta^{K-1}}{2\sigma}	\left(
	\norm{x^{*}-x^{0}}^{2} 
	+ \norm{\hat{y}_{K}-y^{0}}^{2} \right)}.
$

Consequently, $(f(\hat{x}_{K},\hat{y}_{K}) )_{k \in \mathbf{N}}$ converges linearly to  
$f(x^{*},y^{*})$.
	\end{enumerate}

\end{theorem}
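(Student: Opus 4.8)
The plan is to reduce everything to four facts already in hand: the one‑step estimate \cref{prop:fxk+1yxyk+1}, the telescoping inequality $t_k b_{k+1}(x,y)\ge t_{k+1}a_{k+1}(x,y)$ together with the nonnegativity $c_k\ge 0$ from \cref{lemma:abck}, the lower bound on $a_k$ in \cref{lemma:akgeq}, and the Jensen/ergodic inequalities \cref{lemma:fergodic}\cref{lemma:fergodic:ineq} and \cref{lemma:fxkykx*y*}. First I would check that the stationary choice $\tau_k=\sigma_k=\sigma$, $\alpha_k=\alpha,\beta_k=\beta,\gamma_k=\gamma,\delta_k=\delta$, $\lambda_k=\theta_k=\theta=\frac{1}{1+\mu\sigma}$, $t_k=t_0/\theta^k$ meets the hypotheses of \cref{lemma:abck}: the identity $t_k=t_{k+1}\lambda_{k+1}=t_{k+1}\theta_{k+1}$ is immediate; the inequalities $t_k(\frac{1}{\tau_k}+\mu_i)\ge t_{k+1}/\tau_{k+1}$ (and the $\sigma$‑analogue) reduce, since $\frac{1}{\theta\sigma}=\frac{1}{\sigma}+\mu$, to $\mu_i\ge\mu$, which holds by definition of $\mu$; and, again because $\theta=\frac{1}{1+\mu\sigma}$, the quantity $\frac{1}{\tau_k}-\lambda_k\alpha_kL_{xx}-\lambda_k\beta_kL_{xy}-\frac{L_{xx}}{\alpha_{k+1}}-\frac{L_{yx}}{\gamma_{k+1}}$ equals exactly $\eta_x/\sigma$ and its $y$‑counterpart equals $\eta_y/\sigma$, so \cref{lemma:abck}\cref{lemma:abck:ck} applies with equality and $c_k\ge 0$. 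I would also record three elementary facts used repeatedly: $x^{-1}=x^0,\,y^{-1}=y^0$ collapse $a_0(x,y)$ to $\frac{1}{2\sigma}(\norm{x-x^0}^2+\norm{y-y^0}^2)$; \cref{lemma:akgeq} with these parameters gives $a_k(x,y)\ge\frac{\eta_x}{2\sigma}\norm{x-x^k}^2+\frac{\eta_y}{2\sigma}\norm{y-y^k}^2\ge 0$ (because $\frac{1}{\sigma}-\lambda_k\alpha_kL_{xx}-\lambda_k\beta_kL_{xy}=\frac{\eta_x}{\sigma}+\frac{L_{xx}}{\alpha}+\frac{L_{yx}}{\gamma}\ge\frac{\eta_x}{\sigma}$, and similarly for $y$); and $\sum_{i=0}^{K-1}t_i=t_0\sum_{i=0}^{K-1}\theta^{-i}\ge t_0\theta^{-(K-1)}$.

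For \cref{prop:linearconverge:xy}, apply \cref{lemma:fergodic}\cref{lemma:fergodic:ineq} at $(x,y)=(x^*,y^*)$ and then $t_kb_{k+1}\ge t_{k+1}a_{k+1}$, $c_k\ge 0$ to telescope: $f(\hat x_K,y^*)-f(x^*,\hat y_K)\le\frac{1}{\sum_{i=0}^{K-1}t_i}\bigl(t_0a_0(x^*,y^*)-t_Ka_K(x^*,y^*)\bigr)$. Substitute $t_0a_0(x^*,y^*)=\frac{t_0}{2\sigma}(\norm{x^*-x^0}^2+\norm{y^*-y^0}^2)$ and $t_Ka_K(x^*,y^*)\ge\frac{t_0}{2\sigma\theta^K}(\eta_x\norm{x^*-x^K}^2+\eta_y\norm{y^*-y^K}^2)$, multiply through by $2\sigma\theta^K\frac{\sum_{i=0}^{K-1}t_i}{t_0}$, and use $\frac{\sum_{i=0}^{K-1}t_i}{t_0}\ge\theta^{-(K-1)}$ together with $f(\hat x_K,y^*)-f(x^*,\hat y_K)\ge 0$ (the saddle‑point inequality) to lower the gap's prefactor down to $2\sigma\theta$; this yields the displayed inequality, and its left‑hand nonnegativity is clear since $\eta_x,\eta_y>0$ and the gap is nonnegative. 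The two corollary claims follow by discarding the $\eta$‑terms (resp. the gap), dividing by $2\theta\sigma$ (resp. by $\min\{\eta_x,\eta_y\}$, after $\eta_x\norm{x^*-x^K}^2+\eta_y\norm{y^*-y^K}^2\ge\min\{\eta_x,\eta_y\}(\norm{x^*-x^K}^2+\norm{y^*-y^K}^2)$), and noting $\theta\in(0,1)$.

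For \cref{prop:linearconverge:f}, start from \cref{lemma:fxkykx*y*}: inequality \cref{eq:lemma:fxkykx*y*:x} gives $f(\hat x_K,\hat y_K)-f(x^*,y^*)\le\frac{1}{\sum_{i=0}^{K-1}t_i}\sum_{j=0}^{K-1}t_j\bigl(f(x^{j+1},\hat y_K)-f(x^*,y^{j+1})\bigr)$; now apply \cref{prop:fxk+1yxyk+1} at $(x,y)=(x^*,\hat y_K)$, multiply by $t_j$, use $t_jb_{j+1}(x^*,\hat y_K)\ge t_{j+1}a_{j+1}(x^*,\hat y_K)$ and $c_j\ge 0$, telescope, and drop the nonnegative $t_Ka_K(x^*,\hat y_K)$ to reach $\le t_0a_0(x^*,\hat y_K)=\frac{t_0}{2\sigma}(\norm{x^*-x^0}^2+\norm{\hat y_K-y^0}^2)$; dividing by $\sum_{i=0}^{K-1}t_i\ge t_0\theta^{-(K-1)}$ gives the upper estimate. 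The symmetric argument, starting from \cref{eq:lemma:fxkykx*y*:y} and applying \cref{prop:fxk+1yxyk+1} at $(x,y)=(\hat x_K,y^*)$, gives $f(x^*,y^*)-f(\hat x_K,\hat y_K)\le\frac{\theta^{K-1}}{2\sigma}(\norm{\hat x_K-x^0}^2+\norm{y^*-y^0}^2)$, i.e. the lower estimate. Finally, \cref{prop:linearconverge:xy} forces $x^K\to x^*$ and $y^K\to y^*$ strongly, so $((x^k,y^k))_k$ is bounded, hence so is $((\hat x_K,\hat y_K))_K$ as a sequence of convex combinations; thus $\norm{\hat x_K-x^0}^2$ and $\norm{\hat y_K-y^0}^2$ stay under a fixed constant, and both estimates decay like $\theta^{K-1}$, so $(f(\hat x_K,\hat y_K))_K$ converges linearly to $f(x^*,y^*)$.

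The only real obstacle is bookkeeping: verifying the equality $\frac{1}{\tau_k}-\lambda_k\alpha_kL_{xx}-\lambda_k\beta_kL_{xy}-\frac{L_{xx}}{\alpha_{k+1}}-\frac{L_{yx}}{\gamma_{k+1}}=\eta_x/\sigma$ (and its $y$‑version), which hinges on the choice $\theta=\frac{1}{1+\mu\sigma}$ and is exactly what makes the recursion contract at rate $\theta$; carefully tracking powers of $\theta$ through the telescoping sum and the estimate $\sum_{i=0}^{K-1}t_i\ge t_0\theta^{-(K-1)}$ so that the prefactors land precisely on $2\theta\sigma$ and $\theta^{K-1}/(2\sigma)$; and remembering, for the last assertion of \cref{prop:linearconverge:f}, to first extract boundedness of the ergodic averages from \cref{prop:linearconverge:xy} before the $\norm{\hat x_K-x^0}^2$, $\norm{\hat y_K-y^0}^2$ terms can be absorbed into a constant. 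No estimate beyond \cref{prop:fxk+1yxyk+1}, \cref{lemma:akgeq}, \cref{lemma:abck}, \cref{lemma:fergodic}, and \cref{lemma:fxkykx*y*} is needed.
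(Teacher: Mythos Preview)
Your proposal is correct and follows essentially the same approach as the paper: verify the hypotheses of \cref{lemma:abck}, telescope via $t_k b_{k+1}\ge t_{k+1}a_{k+1}$ and $c_k\ge 0$ to $t_0a_0-t_Ka_K$, bound $a_0$ using $x^{-1}=x^0,\,y^{-1}=y^0$ and $a_K$ from below, and use $\sum_{i=0}^{K-1}t_i\ge t_0\theta^{-(K-1)}$ together with \cref{lemma:fxkykx*y*} for part~\cref{prop:linearconverge:f}. The only cosmetic difference is that the paper invokes the pre-packaged bound \cref{lemma:fergodic}\cref{lemma:fergodic:abk:f} (which internally applies \cref{lemma:proxf1f2preliminary} to the inner-product terms of $a_K$), whereas you bound $a_K$ directly via \cref{lemma:akgeq}; these are the same computation organized differently.
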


\begin{proof}
 In view of our assumptions, 
\begin{subequations}\label{eq:prop:linearconverge:t}
	\begin{align}
		&\frac{\sum^{K-1}_{i=0}t_{i}}{t_{0}} = 
		\frac{t_{0}\sum^{K-1}_{i=0}\theta^{-i}}{t_{0}}
		=\frac{1}{\theta^{K-1}} \sum^{K-1}_{i=0}\theta^{i} =\frac{1}{\theta^{K-1}} 
		\frac{1-\theta^{K}}{1-\theta}\geq 
		\frac{1}{\theta^{K-1}};\label{eq:prop:linearconverge:t:t0}\\
		 &\frac{\sum^{K-1}_{i=0}t_{i}}{t_{K}}
		 =\frac{t_{0}}{t_{K}}  \frac{\sum^{K-1}_{i=0}t_{i}}{t_{0}} =\theta^{K} 
		 \frac{\sum^{K-1}_{i=0}t_{i}}{t_{0}} \geq \theta^{K}  \frac{1}{\theta^{K-1}}=  \theta.
	\end{align}
\end{subequations}
Moreover, 
\begin{subequations} \label{eq:prop:linearconverge:geq}
	\begin{align}
 	& \frac{1}{\sigma} -\lambda \left( 
		L_{xx}\alpha +
		L_{xy}\beta	\right) = \frac{\eta_{x}}{\sigma}+\frac{L_{xx}}{\alpha} 
		+\frac{L_{yx}}{\gamma} \geq  \frac{\eta_{x}}{\sigma} >0;  \\
		&\frac{1}{\sigma} -\theta
		\left(L_{yx}\gamma+L_{yy}\delta
		\right) = \frac{\eta_{y}}{\sigma} +\frac{Lxy}{\beta} +\frac{L_{yy}}{\delta} \geq 
		\frac{\eta_{y}}{\sigma} >0.
	\end{align}
\end{subequations}

According to our assumptions, for every $k \in \mathbf{N}$,
\begin{align*}
	&\tau_{k+1}=\sigma \geq \frac{(1+\mu\sigma)\sigma}{1+\mu_{1}\sigma}=
	\frac{\tau_{k}}{\theta_{k+1} 
		(1+\mu_{1}\tau_{k})};\\
	&\sigma_{k+1} =\sigma \geq \frac{(1+\mu\sigma)\sigma}{1+\mu_{2}\sigma}= 
	\frac{\sigma_{k}}{\theta_{k+1} 
		(1+\mu_{2}\sigma_{k})}. 
\end{align*}
	So we are able to apply 
	\cref{lemma:fergodic}\cref{lemma:fergodic:ineq}$\&$\cref{lemma:fergodic:abk:f}.

\cref{prop:linearconverge:xy}: 
Applying
\cref{lemma:fergodic}\cref{lemma:fergodic:ineq}$\&$\cref{lemma:fergodic:abk:f}, 
in the first two inequalities below, we observe that   
\begin{align*}
	&f(\hat{x}_{K},y) -f(x,\hat{y}_{K})\\
	\leq &\frac{1}{\sum^{K-1}_{i=0}t_{i}}  
	\sum^{K-1}_{k=0}t_{k} \left( a_{k}(x,y) -b_{k+1}(x,y) -c_{k}\right)\\
	\leq &\frac{t_{0}}{2\sum^{K-1}_{i=0}t_{i}}   
	\left( \frac{1}{\tau_{0}} 
	\norm{x-x^{0}}^{2} 
	+\frac{1}{\sigma_{0}}\norm{y-y^{0}}^{2} \right)\\
	&-  \frac{t_{K}}{2\sum^{K-1}_{i=0}t_{i}}
	\left(\frac{1}{\tau_{K}} -\lambda_{K} \left( L_{xx}\alpha_{K} + 
	L_{xy}\beta_{K}\right) 
	\right)
	\norm{x -x^{K}}^{2}\\
	&-  \frac{t_{K}}{2\sum^{K-1}_{i=0}t_{i}} 
	\left(  \frac{1}{\sigma_{K}} -\theta_{K} 
	\left(L_{yx}\gamma_{K}+L_{yy}\delta_{K}  
	\right)\right)
	\norm{y -y^{K}}^{2}\\
	\stackrel{\cref{eq:prop:linearconverge:t}}{\leq} & \frac{\theta^{K-1}}{2} \left( 
	\frac{1}{\sigma} 
	\norm{x-x^{0}}^{2} 
	+\frac{1}{\sigma}\norm{y-y^{0}}^{2} \right) -\frac{1}{2 \theta} \left(\frac{1}{\sigma} 
	-\lambda \left( L_{xx}\alpha + 
	L_{xy}\beta\right) 
	\right)
	\norm{x -x^{K}}^{2}\\
	&-\frac{1}{2 \theta} \left(  \frac{1}{\sigma} -\theta
	\left(L_{yx}\gamma+L_{yy}\delta  
	\right)\right)
	\norm{y -y^{K}}^{2}\\
	\stackrel{\cref{eq:prop:linearconverge:geq}}{\leq} & \frac{\theta^{K-1}}{2 \sigma} 
	\left( 
	\norm{x-x^{0}}^{2} 
	+\norm{y-y^{0}}^{2} \right) -\frac{1}{2 \theta} \frac{\eta_{x}}{\sigma}
	\norm{x -x^{K}}^{2} -\frac{1}{2 \theta} \frac{\eta_{y}}{\sigma}
	\norm{y -y^{K}}^{2}.
\end{align*}
After some easy algebra, we get that 
\begin{align} \label{eq:prop:linearconverge}
2\theta  \left( f(\hat{x}_{K},y) -f(x,\hat{y}_{K})\right) 
+ 	\frac{\eta_{x}}{\sigma}
\norm{x -x^{K}}^{2} 
+ \frac{\eta_{y}}{\sigma}
\norm{y -y^{K}}^{2} 
\leq   \frac{\theta^{K}}{\sigma}	\left(  
\norm{x-x^{0}}^{2} 
+ \norm{y-y^{0}}^{2} \right).
\end{align} 

Because $(x^{*},y^{*})$ is a saddle-point of $f$,
via \cref{eq:saddle-point}, we know that 
$ f(\hat{x}_{K},y^{*}) -f(x^{*},\hat{y}_{K}) \geq 0$.
Combine this with \cref{eq:prop:linearconverge} to deduce that 
\begin{align*}
 0\leq &2\theta \sigma \left( f(\hat{x}_{K},y^{*}) -f(x^{*},\hat{y}_{K})\right) + 
	\eta_{x}\norm{x^{*} -x^{K}}^{2}  +\eta_{y}\norm{y^{*} -y^{K}}^{2} \\
	\leq & \theta^{K}  \left( 
	\norm{x^{*}-x^{0}}^{2} 
	+ \norm{y^{*}-y^{0}}^{2} \right), 
\end{align*}
which  yields that
\begin{subequations}
	\begin{align*}
		&0 \leq  f(\hat{x}_{K},y^{*}) -f(x^{*},\hat{y}_{K}) \leq \frac{\theta^{K-1}}{2\sigma} 
		\left( 
		\norm{x^{*}-x^{0}}^{2} 
		+ \norm{y^{*}-y^{0}}^{2} \right);\\
	&\norm{x^{*} -x^{K}}^{2}  + \norm{y^{*} -y^{K}}^{2}  \leq  \theta^{K} 
		\frac{1}{\min\{\eta_{x},\eta_{y}\} }  \left( 
		\norm{x^{*}-x^{0}}^{2} 
		+\norm{y^{*}-y^{0}}^{2} \right).
	\end{align*}
\end{subequations}

\cref{prop:linearconverge:f}: 
According to  \cref{lemma:fxkykx*y*} and 
\cref{lemma:fergodic}\cref{lemma:fergodic:ineq}$\&$\cref{lemma:fergodic:abk:f},  we 
have that 
\begin{align*}
	 f(\hat{x}_{K},\hat{y}_{K}) -f(x^{*},y^{*})  
	 \leq &\frac{1}{\sum^{K-1}_{i=0}t_{i}} \sum^{K-1}_{j=0}t_{j} \left( f(x^{j+1}, 
	 \hat{y}_{K})  
	 -f(x^{*},y^{j+1}) \right)\\
	 \leq & \frac{t_{0}}{2\sum^{K-1}_{i=0}t_{i}}   
	 \left( \frac{1}{\sigma} 
	 \norm{x^{*}-x^{0}}^{2} 
	 +\frac{1}{\sigma}\norm{\hat{y}_{K}-y^{0}}^{2} \right)\\
	 &-  
	 \frac{t_{K}}{2\sum^{K-1}_{i=0}t_{i}}
	 \left(\frac{1}{\sigma} -\lambda \left( L_{xx}\alpha +
	 L_{xy}\beta \right) 
	 \right)
	 \norm{x^{*} -x^{K}}^{2}\\
	 &-  \frac{t_{K}}{2\sum^{K-1}_{i=0}t_{i}} 
	 \left(  \frac{1}{\sigma} -\theta
	 \left(L_{yx}\gamma+L_{yy}\delta
	 \right)\right)
	 \norm{\hat{y}_{K} -y^{K}}^{2}\\
	\stackrel{\cref{eq:prop:linearconverge:geq}}{\leq} &  \frac{t_{0}}{2\sigma 
	\sum^{K-1}_{i=0}t_{i}}   
	\left(  
	\norm{x^{*}-x^{0}}^{2} 
	+ \norm{\hat{y}_{K}-y^{0}}^{2} \right)\\
	\stackrel{\cref{eq:prop:linearconverge:t:t0}}{=} &\frac{\theta^{K-1}}{2\sigma}\left(  
	\norm{x^{*}-x^{0}}^{2} 
	+ \norm{\hat{y}_{K}-y^{0}}^{2} \right).
	\end{align*}
	 Similarly, due to \cref{lemma:fxkykx*y*} and 
	 \cref{lemma:fergodic}\cref{lemma:fergodic:ineq}$\&$\cref{lemma:fergodic:abk:f},  
	 we have that 
\begin{align*}	 
	 f(x^{*},y^{*}) -f(\hat{x}_{K},\hat{y}_{K})
	 \leq &\frac{1}{\sum^{K-1}_{i=0}t_{i}} \sum^{K-1}_{j=0}t_{j} \left( f(x^{j+1},y^{*})  
	 -f(\hat{x}_{K},y^{j+1}) \right) \\
	 \leq & \frac{t_{0}}{2\sum^{K-1}_{i=0}t_{i}}   
	 \left( \frac{1}{\sigma} 
	 \norm{\hat{x}_{K}-x^{0}}^{2} 
	 +\frac{1}{\sigma}\norm{y^{*}-y^{0}}^{2} \right)\\
	 &-  
	 \frac{t_{K}}{2\sum^{K-1}_{i=0}t_{i}}
	 \left(\frac{1}{\sigma} -\lambda \left( L_{xx} \alpha +
	 L_{xy}\beta \right) 
	 \right)
	 \norm{\hat{x}_{K} -x^{K}}^{2}\\
	 &-  \frac{t_{K}}{2\sum^{K-1}_{i=0}t_{i}} 
	 \left(  \frac{1}{\sigma} -\theta
	 \left(L_{yx}\gamma+L_{yy}\delta
	 \right)\right)
	 \norm{y^{*} -y^{K}}^{2}\\
	 	\stackrel{\cref{eq:prop:linearconverge:geq}}{\leq} &
	 	\frac{t_{0}}{2\sigma \sum^{K-1}_{i=0}t_{i}}   
	 	\left( 
	 	\norm{\hat{x}_{K}-x^{0}}^{2} 
	 	+ \norm{y^{*}-y^{0}}^{2} \right)\\
	 		\stackrel{\cref{eq:prop:linearconverge:t:t0}}{=} & \frac{\theta^{K-1}}{2\sigma}
	 		\left(  
	 		\norm{\hat{x}_{K}-x^{0}}^{2} 
	 		+ \norm{y^{*}-y^{0}}^{2} \right).
\end{align*}

Altogether, we have that 
\begin{subequations}\label{eq:prop:linearconverge:f}
	\begin{align}
		&f(\hat{x}_{k},\hat{y}_{k}) -f(x^{*},y^{*}) \leq  \frac{\theta^{k-1}}{2\sigma}	\left(
		\norm{x^{*}-x^{0}}^{2} 
		+ \norm{\hat{y}_{k}-y^{0}}^{2} \right); \\
		&f(x^{*},y^{*}) -f(\hat{x}_{k},\hat{y}_{k})	\leq
		 \frac{\theta^{k-1}}{2\sigma}	\left(  
		\norm{\hat{x}_{k}-x^{0}}^{2} 
		+ \norm{y^{*}-y^{0}}^{2} \right).
	\end{align}
\end{subequations} 
In view of \cref{prop:linearconverge:xy} above, we know that 
$((x^{k},y^{k}))_{k \in \mathbf{N}}$ is bounded, which yields to the boundedness of 
$\left(\left(\hat{x}_{k},\hat{y}_{k}\right)\right)_{k \in \mathbf{N}}$.
This combined with \cref{eq:prop:linearconverge:f} guarantees the desired linear 
convergence result. 
\end{proof}

\begin{remark}\label{remark:linearconvergence}
	Consider assumptions in \cref{theorem:linearconverge}.
	
	Note that  the numbers $\frac{1}{1+\mu \sigma}	\left( 	L_{xx} \alpha 
+L_{xy} \beta  \right)
+\frac{L_{xx}}{\alpha} +\frac{L_{yx}}{\gamma}  $ and 
$\frac{1}{1+\mu \sigma} \left( L_{yx} \gamma  + 
L_{yy} \delta  \right)+ \frac{L_{xy}}{\beta} +\frac{L_{yy}}{\delta}  $ are constants, that 
only $\mu$
	is related to the property of  our functions $f_{1}$ and $f_{2}$, and that $\alpha, 
	\beta, \gamma$, and $\delta$ can be any positive numbers that the user of our 
	algorithm likes. To satisfy 
	requirements of the linear convergence presented in  \cref{theorem:linearconverge},
	we can always set the involved parameters $(\forall k \in \mathbf{N})$ 
	$\tau_{k}=\sigma_{k} \equiv \sigma \in 
	\mathbf{R}_{++}$ in the iteration scheme \cref{eq:algorithmproxif1f2} small enough.
	 
	Therefore,  \cref{assumption} are basically 
	all requirements for our convex-concave saddle-point problems and our algorithm 
	\cref{eq:algorithmproxif1f2} and linear convergence result 
	\cref{theorem:linearconverge} are pretty practical.
	
	Clearly, we have similar conclusions for our convergence results 
	\cref{theorem:fconverge,theorem:weakconverge} in the last subsection.
\end{remark}

  \section*{Acknowledgments}
Hui Ouyang thanks Professor Boyd Stephen for his insight and expertise comments 
 on the topic of saddle-point problems
and all unselfish support. 
Hui Ouyang also thanks Professor Ryu Ernest for some useful conversations. 
Hui Ouyang acknowledges 
the Natural Sciences and Engineering Research Council of Canada (NSERC), 
[funding reference number PDF – 567644 – 2022]. 

\section*{Data Availability Statements}
We do not analyse or generate any datasets,
 because our work proceeds within a theoretical and mathematical approach.

 \addcontentsline{toc}{section}{References}
\bibliographystyle{abbrv}
\bibliography{ccspp}
 
\end{document}